\documentclass[12pt]{article}
\usepackage{amssymb}
\usepackage{mathrsfs}
\usepackage[hypertex]{hyperref}
\usepackage{amsfonts}
\usepackage{graphicx}
\usepackage{latexsym,amsmath,amssymb,amsfonts,amsthm}

\newcommand{\bcen}{\begin{center}}
\newcommand{\ecen}{\end{center}}
\newtheorem{theorem}{Theorem}[section]
\newtheorem{lemma}[theorem]{Lemma}

\newtheorem{corollary}[theorem]{Corollary}

\newtheorem{remark}[theorem]{Remark}
\newtheorem{example}[theorem]{Example}

\newtheorem{defn}[theorem]{Definition}

\begin{document}
\setcounter{page}{1}
\title{Weighted heat kernel comparison theorems and its applications in spectral geometry}
\author{Jing Mao\\
\small{In memory of my father Mr. Xu-Gui Mao}}

\date{}
\protect \footnotetext{\!\!\!\!\!\!\!\!\!\!\!\!{~MSC 2020:
58C40, 58J50, 35K08, 35P15, 35J15.}\\
{Key Words: Weighted heat kernel, radial (Ricci or sectional)
curvature, spherically symmetric manifolds, eigenvalues, comparison
theorems, Witten-Laplacian.} }
\maketitle ~~~\\[-15mm]

\begin{center}
{\footnotesize Faculty of Mathematics and Statistics,\\
 Key Laboratory of Applied
Mathematics of Hubei Province, \\
Hubei University, Wuhan 430062, China\\
Key Laboratory of Intelligent Sensing System and Security (Hubei
University), Ministry of Education\\
Email: jiner120@163.com
 }
\end{center}


\begin{abstract}
In this paper, we firstly establish weighted heat kernel comparison
theorems for the weighted heat equation on complete manifolds with
radial curvatures bounded, and then by mainly using this conclusion,
we can obtain two eigenvalue comparison theorems for the first
Dirichlet eigenvalue of the Witten-Laplacian as applications in
spectral geometry. Besides, as byproducts, eigenvalue comparisons
for the first Dirichlet eigenvalue of the weighted $p$-Laplacian
($1<p<\infty$) have been established as well.
 \end{abstract}


\section{Introduction} \label{Sect1}
\renewcommand{\thesection}{\arabic{section}}
\renewcommand{\theequation}{\thesection.\arabic{equation}}
\setcounter{equation}{0}

Given an $n$-dimensional Riemannian manifold $M^{n}$ with the metric
$\langle\cdot,\cdot\rangle$, assume that $\phi\in C^{\infty}(M^n)$
is a smooth\footnote{~One might find that
  it is sufficient to get some
conclusions in this paper, provided $\phi$ is only $C^2$. However,
for the convenience and in order to avoid any potential confusion,
we assume that $\phi$ is smooth and wish to ignore the discussion on
seeking the weakest regularity of the potential function $\phi$ such
that all the assertions in this paper are still valid, which of
course is not our main purpose here. } real-valued function defined
on $M^n$, and $\Omega\subset M^n$ is a bounded domain. Then it is
natural to define the notion of smooth metric measure space
$(M^{n},\langle\cdot,\cdot\rangle,e^{-\phi}dv)$, which actually
endows the Riemannian volume element $dv$ of $M^{n}$ with a
conformal factor $e^{-\phi}$. On the smooth metric measure space
$(M^{n},\langle\cdot,\cdot\rangle,e^{-\phi}dv)$, the
Witten-Laplacian (also called weighted Laplacian, drifting
Laplacian, or $\phi$-Laplacian) and the $N$-dimensional
Bakry-\'{E}mery Ricci curvature tensor can be well-defined as
follows
\begin{eqnarray*}
\Delta_{\phi}:=\Delta-\langle\nabla\cdot,\nabla\phi\rangle=e^{\phi}\mathrm{div}(e^{-\phi}\nabla\cdot)
\end{eqnarray*}
and
\begin{eqnarray*}
\mathrm{Ric}^{N}_{\phi}:=\mathrm{Ric}+\mathrm{Hess}\phi-\frac{d\phi\otimes
d\phi}{N-n},
\end{eqnarray*}
where $\Delta$, $\nabla$ denote the Laplace and the gradient
operators on $M^{n}$ respectively, $\mathrm{div}$ is the divergence
operator on $M^{n}$, $\mathrm{Ric}$ is the Ricci tensor on $M^{n}$,
and $\mathrm{Hess}$ is the Hessian operator on $M^{n}$ associated to
the metric $\langle\cdot,\cdot\rangle$. For the $N$-dimensional
Bakry-\'{E}mery Ricci curvature $\mathrm{Ric}^{N}_{\phi}$, one knows
$N>n$ or $N=n$ if $\phi$ is a constant function. Specially, when
$N=\infty$, one can define the so-called $\infty$-dimensional
Bakry-\'{E}mery Ricci curvature (simply, Bakry-\'{E}mery Ricci
curvature or weighted Ricci curvature) as follows
 \begin{eqnarray*}
 \mathrm{Ric}_{\phi}=\mathrm{Ric}+\mathrm{Hess}\phi.
 \end{eqnarray*}
D. Bakry and M. \'{E}mery \cite{BE} introduced these curvature
tensors, and studied their relationship with diffusion processes.
For the domain $\Omega$ on
$(M^{n},\langle\cdot,\cdot\rangle,e^{-\phi}dv)$, one can also define
a notion \emph{weighted volume} (or $\phi$-volume) as
follows\footnote{~Generally, except specified, same mathematical
notations would have the same meanings in this paper.}
\begin{eqnarray*}
\mathrm{vol}_{\phi}(\Omega)=|\Omega|_{n,\phi}:=\int_{\Omega}d\mu=\int_{\Omega}e^{-\phi}dv,
\end{eqnarray*}
where $d\mu:=e^{-\phi}dv$ stands for the weighted volume density.
Clearly, if $\phi=const.$ is a constant function, the
Witten-Laplacian $\Delta_{\phi}$, Bakry-\'{E}mery Ricci curvature
$\mathrm{Ric}_{\phi}$, the weighted volume $|\Omega|_{n,\phi}$ would
degenerate into the usual Laplacian $\Delta$, the Ricci curvature
tensor $\mathrm{Ric}$, and the volume
$\mathrm{vol}(\Omega)=|\Omega|_{n}=\int_{\Omega}dv$ (i.e. the volume
of $\Omega$), respectively. We wish to refer readers to \cite{BQ,
LJ1, LV} for further details about geometric properties of the
Bakry-\'{E}mery Ricci curvature tensor.

 In the past four decades, smooth metric measure spaces have been extensively investigated in geometric analysis, and
 have close relation with probability theory and optimal transport.
We wish to mention again several important examples listed in our
previous work \cite[Section
 1]{ZLM} to
support
 this viewpoint --- G. Perelman's pioneering work \cite{PG} on the
 entropy formula of the Ricci flow under a weighted measure,
 Munteanu-Wang's series works \cite{MW1, MW2, MW3} about geometry of manifolds with
 weighted densities, Wei-Wylie's interesting work \cite{WgW} on some comparison
 theorems for manifolds with Bakry-\'{E}mery Ricci curvature
 bounded. Besides, recently, by mainly using the rearrangement technique and suitably constructing
trial functions, under the constraint of fixed weighted volume, R.
F. Chen and J. Mao \cite{CM} successfully obtain several
isoperimetric inequalities for the first and the second Dirichlet
eigenvalues, the first nonzero Neumann eigenvalue of the
Witten-Laplacian on bounded domains in space forms. These spectral
isoperimetric inequalities extend the classical ones (i.e., the
Faber-Krahn inequality, the Hong-Krahn-Szeg\H{o} inequality, and the
Szeg\H{o}-Weinberger inequality) of the Laplacian.

A smooth metric measure metric space
$(M^{n},\langle\cdot,\cdot\rangle,e^{-\phi}dv)$ is said to be
quasi-Einstein if
\begin{eqnarray*}
\mathrm{Ric}^{N}_{\phi}=\tau\langle\cdot,\cdot\rangle
\end{eqnarray*}
for some constant $\tau$. Specially, when $N=\infty$, the above
equality becomes
\begin{eqnarray*}
\mathrm{Ric}_{\phi}=\tau\langle\cdot,\cdot\rangle,
\end{eqnarray*}
and in this setting, $(M^{n},\langle\cdot,\cdot\rangle,e^{-\phi}dv)$
is called a gradient Ricci soliton. A gradient Ricci soliton is
called expanding, steady or shrinking if $\tau<0$, $\tau=0$, and
$\tau>0$, respectively. Ricci solitons are natural extension of
Einstein manifolds, and play an important role in the singularity
analysis of Ricci flow (see e.g. \cite{BR, PG}). We wish to refer
readers to a nice survey \cite{CHD} on the topic of Ricci solitons.

On the smooth metric measure metric space
$(M^{n},\langle\cdot,\cdot\rangle,e^{-\phi}dv)$, it is natural to
consider the following \emph{weighted heat equation}\footnote{~For
convenience, usually the partial derivative $\partial u/\partial t$
of $u(x,t)$ was rewritten as $u_{t}$. Similarly, $\partial
u/\partial x$, $\partial^{2}u/\partial x^2$ can be rewritten as
$u_{x}$ and $u_{xx}$, respectively. This convention on rewriting
partial derivatives of multivariable functions would also be used in
the sequel.}
\begin{eqnarray} \label{weighted-h}
\Delta_{\phi}u-\frac{\partial u}{\partial t}=0
\end{eqnarray}
instead of the classical heat equation, and correspondingly, the
\emph{weighted heat operator} $\mathcal{L}^{\phi}$ can be
well-defined as follows
\begin{eqnarray*}
\mathcal{L}^{\phi}:=\Delta_{\phi}-\frac{\partial}{\partial t}.
\end{eqnarray*}
Obviously, if $\phi=const.$, then the weighted heat equation
(\ref{weighted-h}), the weighted heat operator $\mathcal{L}^{\phi}$
degenerate into the heat equation $\Delta u-u_{t}=0$ and the heat
operator $\mathcal{L}:=\Delta-\partial/\partial t$, respectively.
Besides, if a solution $u$ to the equation (\ref{weighted-h}) is
independent of the time variable $t$, then $u$ is a weighted
harmonic function (also called $\phi$-harmonic function).

 We need the following notion:
\begin{defn}
Let $M^n$ be a Riemannian $n$-manifold and $\phi$ be a real-valued
smooth function on $M^n$. A fundamental solution\footnote{~It is
easy to see that if $u$ is a solution to the equation
(\ref{weighted-h}), then $-u$ is also. Hence, in some literatures
the positivity of the heat kernel has not been specially mentioned.
However, as a default, the kernel is usually required to be
positive. We shall not mention this point again in the sequel.},
which is called the weighted heat kernel, of the weighted heat
equation (\ref{weighted-h}) on $M^n$ is a continuous function
$H^{\phi}=H^{\phi}(x,y,t)$, defined on $M^{n}\times
M^{n}\times(0,\infty)$, which is $C^2$ with respect to $x$, $C^1$
with respect to $t$, and which satisfies
\begin{eqnarray*}
\mathcal{L}^{\phi}_{x}H^{\phi}=0,\qquad
\lim\limits_{t\rightarrow0}H^{\phi}(x,y,t)=\delta^{\phi}_{y}(x),
\end{eqnarray*}
where $\delta^{\phi}_{y}(x)$ is the weighted Dirac delta function,
that is, for all smooth functions $f$ on $M^n$ with compact support
(i.e. $f\in C^{\infty}_{0}(M^n)$), we have, for every $y\in M^n$,
\begin{eqnarray*}
\lim\limits_{t\rightarrow0}\int\limits_{M^n}f(x)H^{\phi}(x,y,t)e^{-\phi}dv(x)=f(y).
\end{eqnarray*}
\end{defn}

\begin{remark}
\rm{ In some literatures, see e.g. \cite{WW1, WW2}, the weighted
heat equation (\ref{weighted-h}), the weighted heat kernel are also
called $\phi$-heat equation and $\phi$-heat kernel, respectively.
Clearly, if $\phi=const.$, then the weighted heat kernel reduces
into the usual heat kernel of the heat equation.
 }
\end{remark}

 It
is well-known that the heat kernel of the heat equation in the
Euclidean $n$-space $\mathbb{R}^n$ is explicitly given as follows
\begin{eqnarray*}
H(x,y,t)=\frac{1}{(4\pi t)^{n/2}}\times
\exp\left(-\frac{|x-y|^2}{4t}\right).
\end{eqnarray*}
Now, we wish to recall several examples to intuitively show that
even in the one dimensional setting, the weighted heat kernel in
several nontrivial cases (i.e. $\phi$ is not a constant function) is
more complicated than the one in the trivial case, i.e.
$\phi=const.$ and $H^{\phi}(x,y,t)=H(x,y,t)=\frac{1}{(4\pi
t)^{1/2}}\times \exp\left(-\frac{|x-y|^2}{4t}\right)$ corresponding
to the trivial case.

\begin{example} [$\phi$-heat kernel for steady Gaussian soliton
\cite{WW1}] \label{exam-1} \rm{ Denote by
$(\mathbb{R},g_{E},e^{-\phi}dx)$ a $1$-dimensional steady Gaussian
soliton, where $g_{E}$ is the Euclidean metric and $\phi(x)=\pm x$.
Then $\mathrm{Ric}_{\phi}=0$,
$\Delta_{\phi}=\frac{d^2}{dx^{2}}\mp\frac{d}{dx}$, and by using the
separation of variables method, the weighted heat kernel is given by
\begin{eqnarray*}
H^{\phi}(x,y,t)=\frac{e^{\pm\frac{x+y}{2}}\cdot e^{-t/4}}{(4\pi
t)^{1/2}}\times \exp\left(-\frac{|x-y|^2}{4t}\right).
\end{eqnarray*}
For detailed calculations, see \cite[Appendix]{WW1}. }
\end{example}

\begin{example} [Mehler heat kernel for shrinking Gaussian soliton \cite{GA}]
\rm{Let $(\mathbb{R},g_{E},e^{-\phi}dx)$ be a $1$-dimensional
shrinking Gaussian soliton, where $g_{E}$ is the Euclidean metric
and $\phi(x)=x^{2}$. Then $\mathrm{Ric}_{\phi}=2$,
$\Delta_{\phi}=\frac{d^2}{dx^{2}}-2x\frac{d}{dx}$, and in this
setting the weighted heat kernel is given by
\begin{eqnarray*}
H^{\phi}(x,y,t)=\frac{1}{(2\pi\sinh2t)^{1/2}}\times\exp\left(\frac{2xye^{-2t}-(x^{2}+y^{2})e^{-4t}}{1-e^{-4t}}+t\right).
\end{eqnarray*}
}
\end{example}

\begin{example} [Mehler heat kernel for expanding Gaussian soliton \cite{GA}]
\rm{Let $(\mathbb{R},g_{E},e^{-\phi}dx)$ be a $1$-dimensional
expanding Gaussian soliton, where $g_{E}$ is the Euclidean metric
and $\phi(x)=-x^{2}$. Then $\mathrm{Ric}_{\phi}=-2$,
$\Delta_{\phi}=\frac{d^2}{dx^{2}}+2x\frac{d}{dx}$, and in this
setting the weighted heat kernel is given by
\begin{eqnarray*}
H^{\phi}(x,y,t)=\frac{1}{(2\pi\sinh2t)^{1/2}}\times\exp\left(\frac{2xye^{-2t}-(x^{2}+y^{2})}{1-e^{-4t}}-t\right).
\end{eqnarray*}
}
\end{example}

There are so many mathematicians having interest on the
(non-weighted or weighted) heat kernel and its applications, and
many interesting results have been obtained. Here we wish to give a
partial list of those results which influence me a lot. In 1981, S.
Y. Cheng, P. Li and S. T. Yau \cite{CLY} gave uniform Gaussian
estimates for the heat kernel on Riemannian manifolds with sectional
curvature bounded from below. Their estimates were improved by J.
Cheeger, M. Gromov, M. Taylor \cite{CGT} to manifolds with bounded
geometry. By using the gradient estimate technique and the Harnack
inequality, P. Li and S. T. Yau \cite{LY} obtained sharp Gaussian
upper and lower bounds for the Schr\"{o}dinger operator on
Riemannian manifolds with nonnegative Ricci curvature. For smooth
metric measure spaces with $\mathrm{Ric}^{N}_{\phi}$ ($N<\infty$)
bounded from below by a negative quadratic function, X. D. Li
\cite{LXD} got Gaussian estimates for the weighted heat kernel, and
proved an $L^{1}_{\phi}$-Liouville theorem. In \cite{WJY}, J. Y. Wu
obtained weighted heat kernel estimates on smooth metric measure
spaces with the weighted Ricci curvature bounded from below by a
negative constant and the potential function bounded. By using the
De Giorgi-Nash-Moser theory and the weighted version of Davies's
integral estimate \cite{DEB}, J. Y. Wu and P. Wu \cite[Theorem
1.1]{WW1} successfully gave a local Gaussian upper bound for the
weighted heat kernel on complete noncompact smooth metric measure
spaces $(M^{n},\langle\cdot,\cdot\rangle,e^{-\phi}dv)$ with
$\mathrm{Ric}_{\phi}\geq0$. Their upper bound estimate generalizes
the classical result of Li-Yau \cite{LY}. As applications of their
local Gaussian upper bound estimate for the weighted heat kernel,
they showed that if furthermore the potential function $\phi$ has at
most quadratic growth, then a sharp $L^{1}_{\phi}$-Liouville theorem
for $\phi$-subharmonic functions and an $L^{1}_{\phi}$-uniqueness
property for nonnegative solutions of the weighted heat equation can
be obtained (see \cite[Theorem 1.5]{WW1}). Besides, it is not hard
to see from \cite{CZ, H1} that any complete noncompact shrinking or
steady gradient Ricci soliton has nonnegative weighted Ricci
curvature and its potential function $\phi$ should have at most
quadratic growth. Then together with Wu-Wu's
$L^{1}_{\phi}$-Liouville theorem mentioned above, it is easy to know
that any $L^{1}_{\phi}$-integral $\phi$-subharmonic function on
gradient shrinking and steady Ricci solitons must be constant (see
\cite[Corollary 1.6]{WW1}). Later, as a sequel of the work
\cite{WW1}, by mainly using the De Giorgi-Nash-Moser theory, J. Y.
Wu and P. Wu \cite[Theorem 1.1]{WW2} obtained local Gaussian (upper
and lower) bounds for the weighted heat kernel on complete
noncompact smooth metric measure spaces
$(M^{n},\langle\cdot,\cdot\rangle,e^{-\phi}dv)$ with Bakry-\'{E}mery
Ricci curvature bounded from below by some nonpositive constant
(i.e. $\mathrm{Ric}_{\phi}\geq-(n-1)K$ for some constant $K\geq0$).
Moreover, when the curvature assumption was reduced to
$\mathrm{Ric}_{\phi}\geq0$ (i.e. $K=0$), they pointed out that the
local Gaussian lower bound for the weighted heat kernel can be
achieved by the $\phi$-heat kernel for steady Gaussian soliton
(explicitly given in Example \ref{exam-1}) as long as the time $t$
is large enough. Besides, as applications, they used their local
Gaussian (upper and lower) bounds for the weighted heat kernel to
obtain estimates for closed eigenvalues and for $\phi$-Green's
function of the Witten-Laplacian -- see \cite[Sections 6 and 7]{WW2}
for details. In our recent work \cite{ZLM}, by utilizing some
functional inequalities and combining with De Giorgi-Moser iteration
technique, some estimates for the weighted $L^{\infty}$-norm of
eigenfunctions of the Witten-Laplacian on smooth metric measure
spaces have been given. Besides, we also proved the regularity and
uniqueness of the weighted heat kernel on smooth metric measure
spaces, which generalized Chavel-Feldman's related results in
\cite{CF}.

In this paper, we wish to give a comparison theorem for the weighted
heat kernel (of the weighted heat equation) on complete manifolds
with radial curvatures bounded. In this setting, the model space
would be \emph{spherically symmetric manifolds}. In order to state
our main result clearly, we need to recall some preliminary
knowledge about the exponential mapping, the cut locus, the radial
direction, the injectivity radius, the radial (Ricci or sectional)
curvature having a bound given by a continuous function (see
Definitions \ref{def-5.2} and \ref{def-5.3}), spherically symmetric
manifolds (see Definition \ref{def-ssm}), and so on. However, in
order to control the length of this section, we wish to give
preliminaries in Section \ref{Sect2} of this paper. By the way,
these preliminaries can also be found in our previous works
\cite[Section 1]{fmi}, \cite[Section 2.1 of Chapter 2]{JM3},
\cite[Section 2]{JM4}.

 For a given Riemannian $n$-manifold $M^n$ with radial (Ricci or sectional) curvature bounded by a continuous function
$\kappa(s)$ of the distance parameter $s:=d_{M^{n}}(q,\cdot)$, $q\in
M^n$, where $d_{M^{n}}$ denotes the distance function on $M^n$, we
consider the following initial value problem
\begin{eqnarray} \label{ODE}
\left\{
\begin{array}{lll}
f''(s)+\kappa(s)f(s)=0,  \qquad & 0<s<l,\\
f'(0)=1,~f(0)=0,\\
f(s)>0, \qquad & 0<s<l,
\end{array}
\right.
\end{eqnarray}
which will be used to determine our model space
$[0,l)\times_{f}\mathbb{S}^{n-1}$ (i.e. spherically symmetric
manifolds, which are called \emph{generalized space forms} in
\cite{KK}). See Definition \ref{def-ssm} in Section \ref{Sect2} for
the precise statement such that a domain to be spherically
symmetric. Readers can check our previous work \cite[pp.
705--709]{fmi} for a brief introduction and an interesting spectral
asymptotic property of spherically symmetric manifolds. Obviously,
if in (\ref{ODE}), $\kappa(s)\equiv K$ for some constant $K$, then
\begin{eqnarray*}
f(s)=\left\{
\begin{array}{llll}
\frac{\sin\sqrt{K}s}{\sqrt{K}}, & \quad  l= \frac{\pi}{\sqrt{K}}
  & \quad K>0,\\
 s, &\quad l=+\infty & \quad K=0, \\
\frac{\sinh\sqrt{-K}s}{\sqrt{-K}}, & \quad l=+\infty  &\quad K<0,
\end{array}
\right.
\end{eqnarray*}
and correspondingly, the model space reduces to space forms.

We need to require:

\vspace{2mm}

\noindent \textbf{Property 1}. \emph{The potential function
$\phi:(0,l)\rightarrow\mathbb{R}$ is additionally required to be a
radial function with respect to the Riemannian distance parameter,
and moreover, its smoothness at the starting point (no matter on the
given complete manifold but also on the model space) should be
assured.}

\vspace{2mm}

Our main result is:

\begin{theorem} \label{maintheorem}
Given a complete Riemannian $n$-manifold $M^n$, $n\geq2$, and
assuming that the potential function $\phi$ satisfies
\textbf{Property 1}, we can obtain:

(1) if $M^n$ has a radial Ricci curvature lower bound
$(n-1)\kappa(s)$ w.r.t. some point $q\in M^n$, then, for
$r_{0}<\min\{\ell(q),l\}$, the inequality
\begin{eqnarray} \label{6-1-1-ex}
H^{\phi}(q,y,t)\geq
H^{\phi}_{-}\left(d_{M^{-}}\left(q^{-},z\right),t\right),
\end{eqnarray}
holds for all $(y,t)\in B(q,r_{0})\times(0,\infty)$ with
  $d_{M^n}(q,y)=d_{M^{-}}(q^{-},z)$ for any $z\in M^{-}$, where $\ell(q)$ is defined by\footnote{~In fact, if one checks the details in Section \ref{Sect2}, he or she would
  find that the quantity $\ell(q)$ is closely related with the first conjugate points of $q$ along different
  unit minimizing geodesics $\gamma_{\xi}(s)=\exp_{q}(s\xi)$, with $\xi\in S_{q}M^{n}\subset T_{q}M^n$.} (\ref{key-def1}),
   the model space is
  $M^{-}=:[0,l)\times_{f}\mathbb{S}^{n-1}$ with the base point $q^{-}$ and $f$ determined by
  (\ref{ODE}), $d_{M^{-}}$ denotes the distance function
  on $M^{-}$. Moreover, the equality in
  (\ref{6-1-1-ex}) holds at some $(y_{0},t_{0})\in
  B(q,r_{0})\times(0,\infty)$ if and only if $B(q,r_{0})$ is
  isometric to $\mathscr{B}_{n}(q^{-},r_{0})$. Here $B(q,r_{0})$ is
  the geodesic ball on $M^{n}$ with center $q$ and radius $r_0$,
  while $\mathscr{B}_{n}(q^{-},r_{0})$ is the geodesic ball on
  $M^{-}$ with center $q^{-}$ and the same radius $r_{0}$;

(2) if $M^n$ has a radial sectional curvature upper bound
$\kappa(s)$ w.r.t. $q\in M^n$, then, for
$r_{0}<\min\{\mathrm{inj}(q),l\}$, the inequality
\begin{eqnarray} \label{6-1-2-ex}
H^{\phi}(q,y,t)\leq
H^{\phi}_{+}\left(d_{M^{+}}\left(q^{+},z\right),t\right),
\end{eqnarray}
holds for all $(y,t)\in B(q,r_{0})\times(0,\infty)$ with
  $d_{M^n}(q,y)=d_{M^{+}}(q^{+},z)$ for any $z\in M^{+}$, where
  $\mathrm{inj}(q)$ defined by (\ref{inj-R}) is the injectivity radius at
  $q$,
  the model space is $M^{+}=:[0,l)\times_{f}\mathbb{S}^{n-1}$ with the base point $q^{+}$ and $f$ determined by
  (\ref{ODE}), $d_{M^{+}}$ denotes the distance function
  on $M^{+}$. Moreover, the equality in
  (\ref{6-1-2-ex}) holds at some $(y_{0},t_{0})\in
  B(q,r_{0})\times(0,\infty)$ if and only if $B(q,r_{0})$ is
  isometric to $\mathscr{B}_{n}(q^{+},r_{0})$. Here $\mathscr{B}_{n}(q^{+},r_{0})$ denotes the geodesic ball on
  $M^{+}$ with center $q^{+}$ and radius $r_{0}$.

(The boundary condition will either be Dirichlet or Neumann.)
\end{theorem}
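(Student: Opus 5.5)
This is a weighted version of the Cheeger--Yau heat kernel comparison; the heat kernels are those of geodesic balls with Dirichlet or Neumann boundary conditions. The plan is to transplant the model kernel to $M^n$, apply the weighted heat operator, and compare via the maximum principle or Duhamel's formula.

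\textbf{Step 1: the model kernel is radial and monotone.} On $M^{\pm}=[0,l)\times_f\mathbb{S}^{n-1}$ the potential $\phi$ is radial by \textbf{Property 1}. Hence, on $\mathscr{B}_n(q^{\pm},r_0)$, the weighted heat kernel centered at $q^{\pm}$ is a function $H^{\phi}_{\pm}(s,t)$ of $s=d_{M^{\pm}}(q^{\pm},\cdot)$ only. This follows from uniqueness of the weighted heat kernel and rotational invariance. In polar coordinates it solves
\begin{eqnarray*}
\partial_t H^{\phi}_{\pm}=\partial_s^2 H^{\phi}_{\pm}+\left((n-1)\frac{f'(s)}{f(s)}-\phi'(s)\right)\partial_s H^{\phi}_{\pm}.
\end{eqnarray*}
The key claim is $\partial_s H^{\phi}_{\pm}(s,t)\le 0$ for $0<s<r_0$, $t>0$. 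To prove it, set $w=\partial_s H^{\phi}_{\pm}$ and differentiate the equation in $s$. This gives a linear parabolic equation for $w$ with zeroth-order coefficient $\partial_s\big((n-1)f'/f-\phi'\big)$. The boundary data are $w(0,t)=0$ by smoothness at the pole, and at $s=r_0$ either $w=0$ (Neumann) or $w\le0$ (Dirichlet, since $H\ge0$ and $H=0$ on the boundary). The initial data are concentrated at $s=0$ with a radially decreasing profile. The parabolic maximum principle then yields $w\le0$. The alternative route is the eigenfunction expansion $H=\sum e^{-\lambda_i t}u_i(q)u_i(s)$ restricted to radial eigenfunctions, but the maximum-principle route is cleaner.

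\textbf{Step 2: transplant and apply the Laplacian comparison.} Define $\widetilde H(y,t):=H^{\phi}_{\pm}(d_{M^n}(q,y),t)$ on $B(q,r_0)$. This is smooth away from $q$ because $r_0<\ell(q)$, respectively $r_0<\mathrm{inj}(q)$. Using the facts from Section \ref{Sect2}:
\begin{itemize}
\item under the radial Ricci lower bound, $\Delta s\le (n-1)f'(s)/f(s)$;
\item under the radial sectional upper bound, $\Delta s\ge (n-1)f'(s)/f(s)$;
\item in both cases $|\nabla s|=1$ and $\langle\nabla s,\nabla\phi\rangle=\phi'(s)$, since $\phi$ is radial.
\end{itemize}
Then
\begin{eqnarray*}
\mathcal{L}^{\phi}\widetilde H=\partial_sH^{\phi}_{\pm}\left(\Delta s-(n-1)\frac{f'}{f}\right).
\end{eqnarray*}
By Step 1, this is $\ge0$ in case (1) and $\le0$ in case (2). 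So $\widetilde H$ is a weighted subsolution in case (1) and a supersolution in case (2).

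\textbf{Step 3: compare with the true kernel via Duhamel.} Both $\widetilde H$ and $H^{\phi}(q,\cdot,\cdot)$ tend to $\delta^{\phi}_q$ as $t\to0$. For the weighted measure this requires the volume-density ratio near $q$ to tend to $1$, which holds since $\phi$ is smooth at $q$ and the comparison metrics agree to first order. Both functions satisfy the same boundary condition: in the Neumann case $\partial_\nu\widetilde H=\partial_sH^{\phi}_{\pm}=0$ because the boundary of the ball is a level set of $s$. The weighted Duhamel principle, obtained by integrating $\frac{d}{d\tau}\int H^{\phi}(x,y,t-\tau)\widetilde H(y,\tau)\,d\mu(y)$ over $0<\tau<t$ and using self-adjointness of $\Delta_\phi$ with respect to $d\mu$, gives
\begin{eqnarray*}
\widetilde H(x,t)-H^{\phi}(q,x,t)=\int_0^t\!\!\int_{B(q,r_0)}H^{\phi}(x,y,t-\tau)\,\mathcal{L}^{\phi}\widetilde H(y,\tau)\,d\mu(y)\,d\tau .
\end{eqnarray*}
Because $H^{\phi}>0$, the sign of $\mathcal{L}^{\phi}\widetilde H$ from Step 2 yields (\ref{6-1-1-ex}) and (\ref{6-1-2-ex}).

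\textbf{Step 4: rigidity.} If equality holds at some $(y_0,t_0)$, the integrand vanishes identically on $B(q,r_0)\times(0,t_0)$. Strong monotonicity, $\partial_sH^{\phi}_{\pm}<0$ for $s>0$, comes from the strong maximum principle in Step 1. It forces $\Delta s=(n-1)f'/f$ throughout the ball. The equality cases of the Laplacian comparison then give Jacobi fields $f(s)E$ along every radial geodesic, so the metric is $ds^2+f(s)^2g_{\mathbb{S}^{n-1}}$ and $B(q,r_0)$ is isometric to $\mathscr{B}_n(q^{\pm},r_0)$. The converse is immediate.

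\textbf{Main obstacle.} I expect the hardest step to be the strict monotonicity of the radial model kernel in Step 1. The delta initial data and the singular drift coefficient $(n-1)f'/f\sim (n-1)/s$ near $s=0$ require care. I would handle this by first proving the claim for the heat flow of smooth, radially decreasing, compactly supported approximations of $\delta^{\phi}_{q^{\pm}}$, and then passing to the limit. The second delicate point is the Duhamel argument near $\tau=0$ and near the pole, where $\widetilde H$ is singular; I would control it with small-time Gaussian bounds on both kernels.
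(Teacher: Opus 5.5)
Your overall route is the paper's: the Cheeger--Yau argument. You transplant the radial model kernel via $s=d_{M^n}(q,\cdot)$ and get strict radial monotonicity of $H^{\phi}_{\pm}$ from the maximum principle for the equation of $\partial_sH^{\phi}_{\pm}$ (Lemma~\ref{lemma3-3}). You then combine a Duhamel identity with the comparison of $\Delta s=(n-1)J'/J$ against $(n-1)f'/f$.

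\textbf{The gap in case (1).} Your assertion that $\widetilde H$ is smooth away from $q$ ``because $r_0<\ell(q)$'' is false. Since $\ell(q)=\max_\xi d_\xi\ge\mathrm{inj}(q)$, for $\mathrm{inj}(q)\le r_0<\ell(q)$ the ball $B(q,r_0)$ contains cut points of $q$, where $s$ is merely Lipschitz. This is exactly why the paper runs the Duhamel identity over the Cheeger--Yau exhaustion $V_\epsilon=\{\exp_q(t\xi):t\le a_\epsilon(\xi)\}$.

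For the inequality itself the damage is repairable. Integrate by parts along each radial geodesic only up to $a(\xi)=\min\{d_\xi,r_0\}$. This shows that $\mathcal{L}^{\phi}\widetilde H$ is a measure with two pieces:
\begin{itemize}
\item your regular part $\partial_sH^{\phi}_-\,(\Delta s-(n-1)f'/f)\,d\mu\ge0$;
\item a part carried by the cut points $\exp_q(d_\xi\xi)$ with $d_\xi<r_0$, with density $-\partial_sH^{\phi}_-(d_\xi,\tau)\,e^{-\phi(d_\xi)}J^{n-1}(d_\xi,\xi)\ge0$ against $d\sigma(\xi)$.
\end{itemize}
So $\widetilde H$ is still a subsolution.

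\textbf{Consequence for rigidity.} The cut-locus term cannot be dropped, and your Step~4 fails without it. Vanishing of the regular part only gives $\Delta s=(n-1)f'/f$ on $B(q,r_0)\setminus\mathrm{Cut}(q)$, and that does not yield an isometry. Take the flat torus $\mathbb{R}^2/\mathbb{Z}^2$ with $\phi$ constant, $\kappa\equiv0$ (so $f(s)=s$, $l=\infty$) and $1/2<r_0<\sqrt2/2=\ell(q)$. There $J=f$ wherever defined, yet $\mathrm{vol}(B(q,r_0))=\int_{\mathbb{S}^1}\tfrac12 a(\xi)^2\,d\sigma<\pi r_0^2$, so the ball is not a Euclidean disc.

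The correct argument runs as follows.
\begin{itemize}
\item At equality both pieces vanish.
\item The regular piece gives $J=f$ on $[0,a(\xi))$, hence $J(d_\xi,\xi)=f(d_\xi)>0$.
\item The singular piece then forces the open set $\{\xi:d_\xi<r_0\}$ to be $\sigma$-null, hence empty.
\item Only now is $a(\xi)\equiv r_0$, and Theorem~\ref{BishopI} applies.
\end{itemize}
This is the heat-kernel analogue of the boundary term $\psi(a(\xi))\psi'(a(\xi))f^{n-1}\theta e^{-\phi}$ that the paper keeps in Appendix~B to get $a(\xi)\equiv r_0$. The rigidity passage in Section~\ref{Sect3} goes directly from $J=f$ to equal volumes and tacitly needs the same fact.

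\textbf{Sign slip.} Your Duhamel formula has the wrong sign. The correct identity, as in (\ref{PT16-3}), is
\[
H^{\phi}(q,x,t)-\widetilde H(x,t)=\int_0^t\!\!\int_{B(q,r_0)}H^{\phi}(x,y,t-\tau)\,\mathcal{L}^{\phi}\widetilde H(y,\tau)\,d\mu(y)\,d\tau .
\]
As you wrote it, a subsolution would lie above $H^{\phi}$, and you would obtain the reversed inequalities.
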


\begin{example}  [$1$-dimensional model] \label{example4}
\rm{ Let $(\mathbb{R},g_{E})$ be the Euclidean $1$-space with the
Euclidean metric $g_{E}$. For a fixed point $x_{0}\in\mathbb{R}$,
set $r:=g_{E}(x_{0},y)=|y-x_{0}|$, which of course measures the
Euclidean distance between two points $x_{0}$, $y$ in $\mathbb{R}$.
If the potential function $\phi$ was chosen to be $\phi=\phi(r)=\pm
r$, which is radial with respect to $x_0$, then
$\mathrm{Ric}_{\phi}=0$,
$\Delta_{\phi}=\frac{d^2}{dr^{2}}\mp\frac{d}{dr}$, and by using the
separation of variables method (see Appendix A), the weighted heat
kernel is given by
\begin{eqnarray*}
H^{\phi}(x_{0},y,t)=H^{\phi}(r,t)&=&\frac{e^{\pm r}\cdot
e^{-t/4}}{(4\pi
t)^{1/2}}\times\exp\left(-\frac{r^2}{4t}\right) \\
&=&\frac{e^{\pm|y-x_{0}|}\cdot e^{-t/4}}{(4\pi
t)^{1/2}}\times\exp\left(-\frac{|y-x_{0}|^2}{4t}\right).
\end{eqnarray*}

}
\end{example}

\begin{remark} \label{remark1-1}
\rm{ In fact, from Example \ref{example4} above, it is not hard to
explicitly get the weighted heat kernel (with a prescribed Dirichlet
or Neumann boundary condition) on balls in $\mathbb{R}^n$ if the
potential function $\phi$ was chosen to be $\phi(r)=\pm r$. Of
course, in this situation, $\phi$ is radial with respect to the
center of the given Euclidean $n$-ball.}
\end{remark}

\begin{example} \label{example55}
\rm{ (see \cite[Subsection 4.6]{GA}) Let $\mathbb{H}^{3}$ be the
hyperbolic $3$-space with constant\footnote{~For the weighted heat
kernel $H^{\phi}$ shown in Example \ref{example55}, there is no
essential difference between $\mathbb{H}^3$ and the hyperbolic
$3$-space with negative constant sectional curvature not equal to
$-1$.} sectional curvature $-1$ and the related volume element $dv$.
For a fixed point $x_{0}\in\mathbb{H}^3$, set
$r:=d_{\mathbb{H}^3}(x_{0},y)$, which measures the hyperbolic
distance between two points $x_{0}$, $y$ in $\mathbb{H}^3$. If the
potential function $\phi$ was chosen to be
 \begin{eqnarray*}
\phi(r)=-2\log\left(\frac{r}{\sinh r}\right),
 \end{eqnarray*}
i.e.
 $e^{-\phi(r)}=\left(\frac{r}{\sinh r}\right)^{2}$,
then the weighted heat kernel is given by
\begin{eqnarray*}
H^{\phi}(x_{0},y,t)=H^{\phi}(r,t)&=&\frac{ e^{-t/4}}{(4\pi
t)^{3/2}}\frac{r}{\sinh r}\times\exp\left(-\frac{r^2}{4t}\right) \\
&=&\frac{e^{-t/4}}{(4\pi
t)^{3/2}}\frac{d_{\mathbb{H}^3}(x_{0},y)}{\sinh(d_{\mathbb{H}^3}(x_{0},y))}\times\exp\left(-\frac{d^{2}_{\mathbb{H}^3}(x_{0},y)}{4t}\right).
\end{eqnarray*}
 Readers can check \cite[Subsection 4.6]{GA} for the information on
 the weighted heat kernel $H^{\phi}$ in hyperbolic spaces with other
 dimensions.
}
\end{example}

\begin{remark}
\rm{ From Example \ref{example4} and Remark \ref{remark1-1}, one
easily knows that for a nontrivial potential function $\phi$, it
would also be more complicated if $\phi$ was additionally required
to be radial. From this aspect, it should be important and
meaningful to get possible comparisons for the weighted heat kernel
as what we have done in Theorem \ref{maintheorem}. }
\end{remark}

\begin{remark}
\rm{In (1) of Theorem \ref{maintheorem}, $\phi$ should be a radial
function of the Riemannian distance parameter
$s=d_{M^n}(q,\cdot)=d_{M^{-}}(q^{-},\cdot)$ with the starting point
$q$ on $M^n$ or the starting point $q^{-}$ on the model manifold
$M^{-}$. However, in (2) of Theorem \ref{maintheorem},  $\phi$
should be a radial function of
$s=d_{M^n}(q,\cdot)=d_{M^{+}}(q^{+},\cdot)$ with the starting point
$q$ on $M^n$ or the starting point $q^{+}$ on the model manifold
$M^{+}$. }
\end{remark}

\begin{remark}
\rm{ One might see that $M^{+}$ coincides with $M^{-}$ if the (lower
or upper) bound function $\kappa(s)$ for the radial Ricci curvature
or for the radial sectional curvature was the same, since in this
case one can get the same warping function $f(s)$ by solving the
initial value problem (\ref{ODE}).

}
\end{remark}

\begin{remark}
\rm{ It should be interesting to know:
 \begin{itemize}
\item \emph{When does the initial value problem (\ref{ODE}) have a unique solution on $(0,\infty)$?}
 \end{itemize}
That is to say, under what kind of assumptions, one has $l=\infty$.
This question is important because  if $l=\infty$, the weighted heat
kernel comparison (\ref{6-1-1-ex}) or (\ref{6-1-2-ex}) might sustain
continuously without any interruption. By the Sturm-Picone
comparison theorem, it is easy to know that if $\kappa(s)\leq0$,
then (\ref{ODE}) has a unique solution on $(0,\infty)$, i.e.
$l=\infty$. What about other situations? In \cite{BLM},
\cite[Chapter 2, Section 2.6]{JM3}, some sufficient conditions
involving $\kappa(s)$ have been given such that the initial value
problem (\ref{ODE}) has a unique solution on $(0,\infty)$. To the
best of our knowledge, so for, the sufficient and necessary
condition of the result $l=\infty$ for the initial value problem
(\ref{ODE}) has not been given yet.
 }
\end{remark}

\begin{corollary} \label{coro-1.1}
If $\phi=const.$, then the weighted heat kernel
$H^{\phi}(\cdot,\cdot,\cdot)$ becomes the usual heat kernel
$H(\cdot,\cdot,\cdot)$ of the heat equation, and correspondingly,
Theorem \ref{maintheorem} degenerates into our previous work
\cite[Theorem 6.6]{JM4} exactly.
\end{corollary}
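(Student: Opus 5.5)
The statement is Corollary \ref{coro-1.1}, which asserts that Theorem \ref{maintheorem} reduces to \cite[Theorem 6.6]{JM4} when $\phi$ is constant. The plan is to check, item by item, that every object in Theorem \ref{maintheorem} collapses to its unweighted counterpart when $\phi\equiv c$. After that, the statement of Theorem \ref{maintheorem} reads word for word as \cite[Theorem 6.6]{JM4}.

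\emph{Step 1: the operator and the equation.} If $\phi\equiv c$, then $\nabla\phi=0$. Hence $\Delta_{\phi}=\Delta-\langle\nabla\cdot,\nabla\phi\rangle=\Delta$, and the weighted heat operator satisfies $\mathcal{L}^{\phi}=\Delta-\partial_t=\mathcal{L}$. So the weighted heat equation (\ref{weighted-h}) is exactly the heat equation.

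\emph{Step 2: the kernel.} The measure becomes $e^{-\phi}dv=e^{-c}dv$, so the weighted delta condition reads
\[
\lim_{t\to0}\int_{M^n} f(x)H^{\phi}(x,y,t)\,e^{-c}\,dv(x)=f(y).
\]
It follows that $e^{-c}H^{\phi}$ is a fundamental solution of the ordinary heat equation, with the same Dirichlet or Neumann boundary condition. By the uniqueness of the heat kernel (the Chavel--Feldman type uniqueness recalled from \cite{CF, ZLM}), we get $H^{\phi}=e^{c}H$. The same identity holds on the model spaces, $H^{\phi}_{\pm}=e^{c}H_{\pm}$.

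The factor $e^{c}$ is a common positive constant on both sides of (\ref{6-1-1-ex}) and (\ref{6-1-2-ex}). It therefore cancels and does not affect either the inequalities or the equality cases. If one normalizes the weighted volume by absorbing the constant, as the paper implicitly does by writing that $H^{\phi}$ "becomes" $H$, then $c=0$ and there is nothing to cancel.

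\emph{Step 3: the remaining hypotheses.} Property 1 holds trivially, since a constant function is radial and smooth at the base point. The curvature hypotheses of Theorem \ref{maintheorem} involve only $\mathrm{Ric}$ and the sectional curvature, not $\phi$. Likewise $\ell(q)$, $\mathrm{inj}(q)$, the ODE (\ref{ODE}) and the model spaces $M^{\pm}$ are independent of $\phi$.

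Combining Steps 1--3, inequalities (\ref{6-1-1-ex}) and (\ref{6-1-2-ex}), together with their rigidity statements, become precisely the heat kernel comparison and rigidity of \cite[Theorem 6.6]{JM4}.

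The only point needing care is the normalization constant $e^{-c}$ in Step 2, which is handled by the cancellation noted there. Apart from that, the argument is direct bookkeeping.
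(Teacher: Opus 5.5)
Your proposal is correct and follows the same reasoning the paper relies on. The paper gives no separate proof: it treats the corollary as the immediate observation (already made in the introduction and in the remark after the definition of $H^{\phi}$) that for constant $\phi$ one has $\Delta_{\phi}=\Delta$ and $\mathcal{L}^{\phi}=\mathcal{L}$, while the geometric hypotheses and the model spaces do not depend on $\phi$. Your extra bookkeeping is a correct refinement of a point the paper glosses over: $\phi\equiv c$ actually gives $H^{\phi}=e^{c}H$, so the kernels coincide literally only when $c=0$, and otherwise the common positive factor cancels in (\ref{6-1-1-ex}), (\ref{6-1-2-ex}) and their equality cases.
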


\begin{remark}
\rm{ In \cite[Theorem 6.6]{JM4}, the author has given lower and
upper bounds for the heat kernel of geodesic balls on complete
manifolds with radial (Ricci and sectional) curvatures bounded. This
conclusion extends Cheeger-Yau's lower bound estimate \cite{CY} and
Debiard-Gaveau-Mazet's upper bound estimate \cite{DGM},
respectively. Hence, one can easily see from Corollary
\ref{coro-1.1} that our Theorem \ref{maintheorem} here improves the
heat kernel estimates in \cite{CY, DGM, JM4} a lot. }
\end{remark}

\begin{remark}
\rm{ As mentioned in \cite[Remark 6.7]{JM4}, the completeness of the
prescribed manifold $M^n$ is a little strong to get comparison
results for the heat kernel, since Cheeger-Yau \cite{CY} have shown
that if the injectivity radius at some point $q$ of a prescribed
manifold $M^n$ is bounded from below, then under the assumptions on
curvature therein, a lower bound can be given for the heat kernel of
geodesic balls on $M^n$. Maybe the completeness of $M^n$ is also a
little strong to get comparisons (\ref{6-1-1-ex}) and
(\ref{6-1-2-ex}) for the weighted heat kernel. However, here we wish
to assume that $M^n$ is complete, this is because that if $M^n$ is
complete, then for $B(q,r_0)\subseteq M^n$ with $r_0$ finite we can
always find optimally continuous bounds for the radial Ricci and
sectional curvatures with respect to $q$ (see (\ref{slb}) and
(\ref{sub})). Therefore, the assumption on the completeness of $M^n$
is feasible. }
\end{remark}

On $(M^{n},\langle\cdot,\cdot\rangle,e^{-\phi}dv)$, for a bounded
domain $\Omega\subset M^{n}$, it is natural to consider the
following Dirichlet eigenvalue problem of the Witten-Laplacian
\begin{eqnarray} \label{eigenp-w1}
\left\{
\begin{array}{ll}
\Delta_{\phi} u + \lambda u=0 \qquad  &\mathrm{in}~~\Omega,  \\[0.5mm]
u= 0 \qquad &\mathrm{on}~~\partial \Omega,
\end{array} \right.
\end{eqnarray}
with $\partial\Omega$ the boundary\footnote{~In general case, to
avoid a little bit boring discussion on the regularity of the
boundary $\partial\Omega$, which is obviously \emph{not} the point
mainly considered in this paper, one might assume that
$\partial\Omega$ is smooth for the Dirichlet eigenvalue problem of
the Witten-Laplacian. However, if one checks carefully the details
in this paper, he or she would find that weaker regularity
assumptions on $\partial\Omega$, such as $\partial\Omega$ is
piecewise smooth or Lipschitz continuous, can also make sure that
the spectral results of this paper are valid. Based on this reason,
in this paper, we do not make any explicit assumption for the
regularity of $\partial\Omega$ and we hope that one could understand
that a suitable regularity assumption for $\partial\Omega$ has been
used \emph{potentially} in our paper.} of the bounded domain
$\Omega$. For this problem, one has (see e.g. \cite[pp.
1247-1248]{CM}):
\begin{itemize}
\item The eigenvalue problem (\ref{eigenp-w1}) only has discrete
spectrum, and all the elements (i.e. eigenvalues) in this discrete
spectrum can be listed non-decreasingly as follows
 \begin{eqnarray} \label{sequen-1}
 0<\lambda_{1,\phi}(\Omega)<\lambda_{2,\phi}(\Omega)\leq\lambda_{3,\phi}(\Omega)\leq\cdots\uparrow+\infty.
 \end{eqnarray}
Each eigenvalue\footnote{~For convenience and without any confusion,
except specification we wish to write $\lambda_{i,\phi}(\Omega)$ as
$\lambda_{i,\phi}$ directly. This abbreviation convention would also
be used for eigenvalues of other types discussed in the sequel.}
$\lambda_{i,\phi}$, $i=1,2,\ldots$, in the sequence (\ref{sequen-1})
was repeated according to its multiplicity (which is finite and
equals the dimension of the eigenspace of $\lambda_{i,\phi}$).

 \item By the variational principle, it is not hard to see that the
 first Dirichlet eigenvalue $\lambda_{1,\phi}(\Omega)$ can be
 characterized as follows
  \begin{eqnarray} \label{chr-1}
 \lambda_{1,\phi}(\Omega)=\inf\left\{\frac{\int_{\Omega}|\nabla f|^{2}e^{-\phi}dv}{\int_{\Omega}f^{2}e^{-\phi}dv}=
 \frac{\int_{\Omega}|\nabla f|^{2}d\mu}{\int_{\Omega}f^{2}d\mu}\Bigg{|}f\in
 W^{1,2}_{0,\phi}(\Omega),f\neq0\right\}.
  \end{eqnarray}
Using notations as in \cite{CM}, here $W^{1,2}_{0,\phi}(\Omega)$
denotes a Sobolev space, which is the completion of the set of
smooth functions (with compact support) $C^{\infty}_{0}(\Omega)$
under the following Sobolev norm
\begin{eqnarray*}
\|f\|^{\phi}_{1,2}:=\left(\int_{\Omega}f^{2}e^{-\phi}dv+\int_{\Omega}|\nabla
f|^{2}e^{-\phi}dv\right)^{1/2}.
\end{eqnarray*}
Moreover, the $k$-th Dirichlet eigenvalue $\lambda_{k,\phi}(\Omega)$
can be characterized as follows
 \begin{eqnarray*}
 \lambda_{k,\phi}(\Omega)=\inf\left\{\frac{\int_{\Omega}|\nabla f|^{2}d\mu}{\int_{\Omega}f^{2}d\mu}
 \Bigg{|}f\in W^{1,2}_{0,\phi}(\Omega),f\neq0,\int_{\Omega}ff_{i}d\mu=0\right\},
 \end{eqnarray*}
where $f_{i}$, $i=1,2,\cdots,k-1$, denotes an eigenfunction of
$\lambda_{i,\phi}(\Omega)$.
\end{itemize}
In our previous work \cite[Introduction]{CM1}, we have given a
detailed explanation on why is interesting and important to study
spectral geometric problems related to the Witten-Laplacian. We
already have some interesting works about spectral isoperimetric
inequalities, spectral estimates and geometric functional
inequalities related to the Witten-Laplacian -- see, e.g., \cite{CM,
CM1, DMWW, JM1, JM2, YWMD}.

By the way, if $\phi=const.$, then the eigenvalue problem
(\ref{eigenp-w1}) would degenerate into the classical fixed membrane
problem of the Laplacian, which has been extensively studied and can
be used as a mathematical model to describe the vibration of a
membrane with boundary fixed (when $M^{2}=\mathbb{R}^2$, and
moreover, in this case, $\Omega$ is a bounded planar domain). We
wish to refer readers to several books \cite{IC, MDW, UH} for a
relatively comprehensive knowledge about the history and some
progresses of spectral problems of the Laplacian.

More than 10 years ago,  P. Freitas, J. Mao, and I. Salavessa
\cite{fmi} successfully extended the classical Bishop's volume
comparison theorems to a more general setting (see also Section
\ref{Sect2} for details), that is, the curvature assumptions for
volume comparisons have been weakened from (Ricci or sectional)
curvature bounded (by a constant) to radial (Ricci or sectional)
curvature bounded (by some continuous function with respect to the
Riemannian distance parameter), and then they improved S. Y. Cheng's
eigenvalue comparison theorems in \cite{CSY1, CSY2} by applying
their volume comparisons and using spherically symmetric manifolds
as the model space.

As applications of the weighted heat kernel comparison (i.e. Theorem
\ref{maintheorem}), we can get the following eigenvalue comparison
theorems of the Witten-Laplacian, which can be seen as the weighted
version of those shown in \cite[Theorems 3.6 and 4.4]{fmi}.

\begin{theorem} \label{theo-1}
Assume that an $n$-dimensional ($n\geq2$) complete Riemannian
manifold $M^n$ has a radial Ricci curvature lower bound
$(n-1)\kappa(s)$ w.r.t. $q\in M^n$, where $s=d_{M^n}(q,\cdot)$
denotes the Riemannian distance from $q$. Assume that the potential
function $\phi$ has \textbf{Property 1}. Then  for
$r_{0}<\min\{\ell(q),l\}$ with $\ell(q)$ defined as in
(\ref{key-def1}), one has
\begin{eqnarray} \label{ECT-1}
\lambda_{1,\phi}(B(q,r_0))\leq\lambda_{1,\phi}\left(\mathscr{B}_{n}(q^{-},r_0)\right),
\end{eqnarray}
and the equality holds if and only if $B(q,r_0)$ is isometric to
$\mathscr{B}_{n}(q^{-},r_0)$.
\end{theorem}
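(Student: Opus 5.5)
The plan is to derive (\ref{ECT-1}) from the lower heat kernel comparison (\ref{6-1-1-ex}) of Theorem \ref{maintheorem}(1), with the Dirichlet condition on $B(q,r_0)$ and on $\mathscr{B}_{n}(q^{-},r_0)$, by reading off the first eigenvalue from the large-time decay of the kernel. On the bounded domain $\Omega=B(q,r_0)$ the Dirichlet weighted heat kernel has the eigenfunction expansion
\begin{eqnarray*}
H^{\phi}(x,y,t)=\sum_{i\geq1}e^{-\lambda_{i,\phi}t}u_{i}(x)u_{i}(y),
\end{eqnarray*}
where $\{u_i\}$ is an $L^{2}(e^{-\phi}dv)$-orthonormal basis of Dirichlet eigenfunctions. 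This holds because $\Delta_{\phi}$ is self-adjoint with respect to $d\mu=e^{-\phi}dv$ and has the discrete spectrum (\ref{sequen-1}). Since $\lambda_{1,\phi}$ is simple and $u_1>0$ in the interior,
\begin{eqnarray*}
\lim_{t\rightarrow\infty}\frac{1}{t}\log H^{\phi}(q,y,t)=-\lambda_{1,\phi}(B(q,r_0))
\end{eqnarray*}
for every interior $y$. Here $u_1(q)>0$ and $u_1(y)>0$, and the remaining terms decay like $e^{-\lambda_{2,\phi}t}$ with $\lambda_{2,\phi}>\lambda_{1,\phi}$. The same expansion holds on the model ball. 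By Property 1, $\phi$ is radial there, so the model kernel $H^{\phi}_{-}(s,t)$ depends only on $s=d_{M^{-}}(q^{-},\cdot)$ and $t$, and its decay rate is $\lambda_{1,\phi}(\mathscr{B}_{n}(q^{-},r_0))$.

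The inequality then follows directly. Fix $y\in B(q,r_0)$ and $z$ with $d_{M^n}(q,y)=d_{M^{-}}(q^{-},z)$. Then (\ref{6-1-1-ex}) gives $H^{\phi}(q,y,t)\geq H^{\phi}_{-}(d_{M^{-}}(q^{-},z),t)>0$ for all $t>0$. Taking $\frac{1}{t}\log$ and letting $t\to\infty$ yields $-\lambda_{1,\phi}(B(q,r_0))\geq-\lambda_{1,\phi}(\mathscr{B}_{n}(q^{-},r_0))$, which is (\ref{ECT-1}).

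For the rigidity, the direction ``isometric implies equality'' is clear, since the weighted Dirichlet problem is transported by an isometry fixing the center and $\phi$ is the same radial function on both balls. The converse is the main obstacle, because equality of the exponential rates does not immediately force equality of the kernels at some point, which is the hypothesis in the rigidity clause of Theorem \ref{maintheorem}. I would try the following. Suppose $\lambda_{1,\phi}(B(q,r_0))=\lambda_{1,\phi}(\mathscr{B}_{n}(q^{-},r_0))=:\lambda$. Let $v(s)$ be the radial first eigenfunction on the model ball and transplant it as $\tilde v=v\circ d_{M^n}(q,\cdot)$. It satisfies
\begin{eqnarray*}
\Delta_{\phi}\tilde v=v''+\left(\Delta s-\phi'\right)v'.
\end{eqnarray*}
Since $v'\leq0$ and the Laplacian comparison under the radial Ricci lower bound gives $\Delta s\leq(n-1)f'/f$ (valid because $r_0<\ell(q)$, outside a measure-zero cut set), we get $\Delta_{\phi}\tilde v+\lambda\tilde v\geq0$ in the distributional sense. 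Thus the Rayleigh quotient (\ref{chr-1}) of $\tilde v$ is at most $\lambda$. Equality of the eigenvalues forces $\tilde v$ to be a first eigenfunction, so $(\Delta s-(n-1)f'/f)v'=0$ a.e. Because $v'<0$ for $0<s<r_0$, this gives $\Delta s=(n-1)f'/f$ on $B(q,r_0)$. The equality case of the Bishop-type volume comparison in \cite{fmi}, recalled in Section \ref{Sect2}, then shows that $B(q,r_0)$ is isometric to $\mathscr{B}_{n}(q^{-},r_0)$.

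Alternatively, one can run this argument directly on the kernels: equality of the eigenvalues plus (\ref{6-1-1-ex}) leads to equality of the leading coefficients, so the difference of the kernels is a nonnegative supersolution. The strong maximum principle would then force equality at some point, and the rigidity part of Theorem \ref{maintheorem}(1) applies. I expect that route to be more delicate, so I would use the transplantation argument.
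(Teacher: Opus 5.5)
Your derivation of (\ref{ECT-1}) is the paper's own Section~\ref{Sect4} argument: Theorem~\ref{maintheorem}(1), the expansion (\ref{4-1}), and $t\to\infty$. That part is fine.

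The gap is in the rigidity, at the step from ``$(\Delta s-(n-1)f'/f)v'=0$ a.e.'' to the equality case of Theorem~\ref{BishopI}. The a.e.\ identity only gives $J(s,\xi)=f(s)$ for $s<a(\xi)=\min\{d_\xi,r_0\}$, i.e.\ off $\mathrm{Cut}(q)$. Hence $\mathrm{vol}(B(q,r_0))=\int_{S_q^{n-1}}\int_0^{a(\xi)}f^{n-1}\,ds\,d\sigma$, which is strictly smaller than $\mathrm{vol}(\mathscr{B}_n(q^{-},r_0))$ whenever $\mathrm{Cut}(q)$ meets $B(q,r_0)$. So the volume equality that Theorem~\ref{BishopI} needs is not established.

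This is not a formality. Take the flat torus $\mathbb{R}^2/\mathbb{Z}^2$ with $\kappa\equiv0$ and $\mathrm{inj}(q)=1/2<r_0<\sqrt{2}/2=\ell(q)$. There $\Delta r=1/r=f'/f$ everywhere off the cut locus, yet $B(q,r_0)$ is not a Euclidean disk. Discarding the cut set as ``measure zero'' is harmless for the inequality, because its contribution has the favorable sign. It is exactly what destroys the rigidity.

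The missing step is to keep that contribution, as the paper does in Appendix~B. Set $w=f^{n-1}\theta e^{-\phi}$ and let $\lambda$ denote the model eigenvalue. Integrating by parts along each $\gamma_\xi$ and using (\ref{6-1}) gives
\[
\int_0^{a(\xi)}v'^2w\,ds=\lambda\int_0^{a(\xi)}v^2w\,ds-\int_0^{a(\xi)}vv'\,\frac{\theta'}{\theta}\,w\,ds+v(a(\xi))\,v'(a(\xi))\,w(a(\xi)),
\]
where the last two terms are $\leq0$. Integrating over $\xi$ and comparing with $\lambda_{1,\phi}(B(q,r_0))=\lambda$ forces both terms to vanish for a.e.\ $\xi$:
\begin{itemize}
\item The first term vanishing gives $\theta'\equiv0$, so $J=f$ on $[0,a(\xi)]$ and $w(a(\xi))>0$.
\item Since $v'<0$ on $(0,r_0]$, the second term vanishing then forces $v(a(\xi))=0$, i.e.\ $d_\xi\geq r_0$.
\end{itemize}
By continuity of $\xi\mapsto d_\xi$ this holds for every $\xi$. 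So $B(q,r_0)\cap\mathrm{Cut}(q)=\emptyset$, the volumes agree, and Theorem~\ref{BishopI} yields the isometry.

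In your distributional language: the singular part of $\Delta_\phi\tilde v+\lambda\tilde v$ carried by $\mathrm{Cut}(q)\cap B(q,r_0)$ must vanish. It is nonzero as soon as some $d_\xi<r_0$, because $J(d_\xi,\xi)=f(d_\xi)>0$.

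Two further remarks.
\begin{itemize}
\item This same computation already proves (\ref{ECT-1}) without Theorem~\ref{maintheorem}, so the heat kernel step is redundant.
\item Your alternative kernel route does fail. Equal decay rates plus (\ref{6-1-1-ex}) produce no point where the two kernels touch, so the strong maximum principle has nothing to act on. The paper records this obstruction in its remark after the Section~\ref{Sect4} proof.
\end{itemize}
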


\begin{theorem}  \label{theo-2}
Assume that an $n$-dimensional ($n\geq2$) complete Riemannian
manifold $M^n$ has a radial sectional curvature upper bound
$\kappa(s)$ w.r.t. $q\in M^n$, and assume that the potential
function $\phi$ has \textbf{Property 1}. Then for
$r_{0}<\min\{\mathrm{inj}(q),l\}$ with $\mathrm{inj}(q)$ defined as
in (\ref{inj-R}), one has
\begin{eqnarray}  \label{ECT-2}
\lambda_{1,\phi}(B(q,r_0))\geq\lambda_{1,\phi}\left(\mathscr{B}_{n}(q^{+},r_0)\right),
\end{eqnarray}
and the equality holds if and only if $B(q,r_0)$ is isometric to
$\mathscr{B}_{n}(q^{+},r_0)$.
\end{theorem}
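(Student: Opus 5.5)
The plan is to deduce the inequality (\ref{ECT-2}) from part (2) of Theorem \ref{maintheorem}, taken with the Dirichlet boundary condition, by using the large-time behaviour of the weighted heat kernel. On a bounded domain $\Omega$ the Dirichlet weighted heat kernel has the eigenfunction expansion
\begin{eqnarray*}
H^{\phi}_{\Omega}(x,y,t)=\sum_{i\geq1}e^{-\lambda_{i,\phi}(\Omega)t}u_{i}(x)u_{i}(y),
\end{eqnarray*}
where $\{u_i\}$ is an orthonormal basis of $L^{2}(\Omega,d\mu)$ consisting of Dirichlet eigenfunctions. Since $\Delta_{\phi}$ is self-adjoint with respect to $d\mu$, this is standard, and so is the strict inequality $\lambda_{1,\phi}<\lambda_{2,\phi}$ recorded in (\ref{sequen-1}), with $u_{1}>0$ in the interior. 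It follows that
\begin{eqnarray*}
\lambda_{1,\phi}(\Omega)=-\lim_{t\rightarrow\infty}\frac{1}{t}\log H^{\phi}_{\Omega}(x,y,t)
\end{eqnarray*}
for any fixed interior points $x,y$. I apply this with $\Omega=B(q,r_0)$ and $x=q$, and with $\Omega=\mathscr{B}_{n}(q^{+},r_0)$ and $x=q^{+}$.

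\textbf{Step 1 (the inequality).} Fix $y\in B(q,r_0)$ with $y\neq q$ and pick $z\in\mathscr{B}_{n}(q^{+},r_0)$ with $d_{M^{+}}(q^{+},z)=d_{M^n}(q,y)$. By (\ref{6-1-2-ex}),
\begin{eqnarray*}
H^{\phi}(q,y,t)\leq H^{\phi}_{+}(q^{+},z,t) \qquad \text{for all } t>0.
\end{eqnarray*}
Take logarithms, divide by $t$, and let $t\rightarrow\infty$. This gives $-\lambda_{1,\phi}(B(q,r_0))\leq-\lambda_{1,\phi}(\mathscr{B}_{n}(q^{+},r_0))$, which is exactly (\ref{ECT-2}). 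The point $q$ needs $u_{1}(q)\neq0$, but that holds because $u_1$ is positive inside. Also, the hypotheses of Theorem \ref{maintheorem}(2) and of the present theorem coincide ($r_{0}<\min\{\mathrm{inj}(q),l\}$, \textbf{Property 1}), so no extra work is needed there.

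\textbf{Step 2 (rigidity).} If the two balls are isometric, then $\phi$ is radial on both and is the same function of $s$, so the weighted spectra coincide and equality holds. The converse is the harder direction, and it is where I expect the main obstacle, because the $t\to\infty$ limit discards the information carried by the equality case of Theorem \ref{maintheorem}.

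I would argue as follows. The Dirichlet first eigenfunction of the model ball is radial, say $v(s)$ with $v'<0$, and satisfies
\begin{eqnarray*}
v''+\left((n-1)\frac{f'}{f}-\phi'\right)v'+\lambda v=0.
\end{eqnarray*}
Transplant it to $B(q,r_0)$ as $w(y)=v(d_{M^n}(q,y))$, which is smooth away from $q$ since $r_0<\mathrm{inj}(q)$. The sectional curvature upper bound $\kappa(s)$ gives, by the Hessian comparison underlying the volume comparison of \cite{fmi}, the estimate $\Delta s\geq(n-1)f'/f$. Combined with $v'\leq0$, this yields $\Delta_{\phi}w+\lambda_{1,\phi}(\mathscr{B}_{n}(q^{+},r_0))\,w\leq0$ in $B(q,r_0)$, so $w$ is a positive supersolution with zero boundary values. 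Barta's inequality, or equivalently testing against $u_1$ and integrating by parts with respect to $d\mu$, then gives (\ref{ECT-2}) a second time. In this form the equality case is visible: equality forces $\Delta s=(n-1)f'/f$ everywhere on $B(q,r_0)$. By the rigidity part of the comparison theorem in \cite{fmi}, the metric in geodesic polar coordinates is then $ds^{2}+f(s)^{2}d\theta^{2}$, that is, $B(q,r_0)$ is isometric to $\mathscr{B}_{n}(q^{+},r_0)$.

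Alternatively, for the rigidity one can argue from the kernels directly. If $\lambda_{1,\phi}$ agree, the difference $H^{\phi}_{+}-H^{\phi}$, viewed as a function on $B(q,r_0)$ through the radial variable, is a nonnegative supersolution of the weighted heat equation. The strong maximum principle then forces it to vanish identically, and one concludes via the equality clause of Theorem \ref{maintheorem}(2). I would nevertheless use the Barta-type argument, since it is cleaner.
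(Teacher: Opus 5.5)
Your proposal is correct and follows the paper's own route. The inequality, obtained from Theorem \ref{maintheorem}(2) together with the eigenfunction expansion and the limit $t\to\infty$, is exactly the argument of Section \ref{Sect4}; the paper uses the diagonal value $H^{\phi}(q,q,t)$, which works just as well as your choice $y\neq q$. The rigidity argument is the proof in Appendix B: transplant the radial model eigenfunction, use the comparison $\Delta s\geq(n-1)f'/f$ from Theorem \ref{BishopII}, and apply a Barta-type argument. Your choice to test against $u_1$, rather than use the $\sup$-form of Barta's lemma, actually produces the pointwise identity $\Delta s=(n-1)f'/f$ that the paper's one-line appeal to Theorem \ref{BishopII} leaves implicit.

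Do discard the ``alternative'' kernel-based rigidity sketch, however. Equal first eigenvalues only match the exponential decay rates as $t\to\infty$. They give no interior zero of $H^{\phi}_{+}-H^{\phi}$ at a finite time; a priori it is only known to vanish on $\partial B(q,r_0)$. So the strong maximum principle has nothing to act on, and this is precisely the obstruction the paper points out in its remark at the end of Section \ref{Sect4}.
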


\begin{remark}
\rm{ Denote by $\lambda_{k}(\cdot)$ the $k$-th Dirichlet eigenvalue
of the Laplacian on a given geometric object. Clearly, if
$\phi=const.$, then (\ref{ECT-1})-(\ref{ECT-2}) reduce to
\begin{eqnarray} \label{ECT-3}
\lambda_{1}(B(q,r_0))\leq\lambda_{1}\left(\mathscr{B}_{n}(q^{-},r_0)\right)
\end{eqnarray}
and
\begin{eqnarray} \label{ECT-4}
\lambda_{1}(B(q,r_0))\geq\lambda_{1}\left(\mathscr{B}_{n}(q^{+},r_0)\right),
\end{eqnarray}
respectively. Moreover, when the equalities in the above two
eigenvalue inequalities hold, the corresponding isometric rigidity
between two geodesic balls can also be attained. This conclusion is
exactly the assertions in \cite[Theorems 3.6 and 4.4]{fmi}. }
\end{remark}

\begin{remark} \label{remark-1.19}
\rm{ In Appendix B, another approach of proving Theorem \ref{theo-1}
and Theorem \ref{theo-2} would be shown carefully. All the
calculations in Appendix B were carried out by my master student J.
J. Gu under my guidance, and would constitute the main content of
her master degree's dissertation \cite{GJJ} which will be formally
issued in June 2026. By the way, the assertion in Theorem
\ref{theo-1} will be generalized to the case of nonlinear weighted
$p$-Laplacian ($1<p<\infty$) in Appendix B as well.
 }
\end{remark}

\begin{remark}
\rm{ Specially, if curvature assumptions were strengthened to be
$\mathrm{Ric}(M^n)\geq(n-1)K$ or $\mathrm{Sec}(M^n)\leq K$ for some
constant $K$ (i.e. the Ricci (or sectional) curvature of the
complete $n$-manifold $M^n$ is bounded from below (or above) by some
constant), then the eigenvalue comparisons
(\ref{ECT-3})-(\ref{ECT-4}) would degenerate into
\begin{eqnarray} \label{ECT-5}
\lambda_{1}(B(q,r_0))\leq\lambda_{1}\left(\mathcal{B}_{n}(K,r_0)\right)
\end{eqnarray}
and
\begin{eqnarray} \label{ECT-6}
\lambda_{1}(B(q,r_0))\geq\lambda_{1}\left(\mathcal{B}_{n}(K,r_0)\right),
\end{eqnarray}
where $\mathcal{B}_{n}(K,r_0)$ denotes a geodesic ball in the
$n$-dimensional space form $\mathbb{M}^{n}(K)$ having constant
sectional curvature $K$. Moreover, the equality in (\ref{ECT-5}) or
(\ref{ECT-6}) holds if and only if $B(q,r_0)$ is isometric to
$\mathcal{B}_{n}(K,r_0)$. Since any space form is two-points
homogeneous, there is no need to mention any information for the
center of the geodesic ball $\mathcal{B}_{n}(K,r_0)$. Eigenvalue
comparisons (\ref{ECT-5})-(\ref{ECT-6}) and related rigidity results
are main conclusions in S. Y. Cheng's works \cite{CSY1, CSY2}. Based
on this reason, we wish to call (\ref{ECT-1})-(\ref{ECT-4})
``\emph{Cheng-type eigenvalue comparisons}". In fact, this naming
convention has already been used in our previous works \cite{fmi,
JM3, JM4}.

 }
\end{remark}

\begin{remark}
\rm{ In \cite[Section 4]{SAG}, by mainly using a similar argument to
that in Cheng's proof of \cite[Theorem 1.1]{CSY1}, A. G. Setti
obtained an eigenvalue comparison theorem for the first Dirichlet
eigenvalue of the Witten-Laplacian on geodesic balls of complete
Riemannian $n$-manifolds with a modified Ricci curvature bounded
from below by a constant. More precisely, for a complete Riemannian
$n$-manifold $M^{n}$ with the metric $\langle\cdot,\cdot\rangle$,
let $w$ be a given smooth strictly positive function on $M^{n}$
(which was called \emph{weight function} therein), and then A. G.
Setti defined \emph{a modified Ricci curvature} as follows
 \begin{eqnarray} \label{MRC}
 \mathrm{Ric}_{w}:=\mathrm{Ric}-w^{-1}\mathrm{Hess}w.
 \end{eqnarray}
Since $w$ is a smooth strictly positive function on $M^{n}$, it is
easy to know that there must exist a potential function $\phi$
(which is smooth real-valued) on $M^{n}$ such that $w=e^{-\phi}$,
and then one has from (\ref{MRC}) that
 \begin{eqnarray} \label{MRC-1}
 \mathrm{Ric}_{w}=\mathrm{Ric}-e^{\phi}\mathrm{Hess}(e^{-\phi})=\mathrm{Ric}+\mathrm{Hess}\phi-d\phi\otimes
d\phi=
 \mathrm{Ric}_{\phi}-d\phi\otimes
d\phi.
 \end{eqnarray}
 Therefore, it follows from (\ref{MRC-1}) that comparing with the
 Bakry--\'{E}mery Ricci curvature tensor $\mathrm{Ric}_{\phi}$,
the modified Ricci
 curvature tensor $\mathrm{Ric}_{w}$ defined in \cite{SAG} has an extra term $-d\phi\otimes
d\phi$. Besides, the only difference between $\mathrm{Ric}_{w}$ and
the $N$-dimensional Bakry-\'{E}mery Ricci curvature tensor
$\mathrm{Ric}_{\phi}^{N}$ is the coefficient $\frac{1}{N-n}$ in
front of the term $-d\phi\otimes d\phi$, which in my opinion is not
an essential difference. In \cite[Theorem 4.2]{SAG}, A. G. Setti
showed that if
 \begin{eqnarray*}
 \mathrm{Ric}_{w}(v_{x},v_{x})\geq\alpha
 \end{eqnarray*}
for some constant $\alpha$ in the geodesic ball $B(q,R)$ on a given
complete Riemannian $n$-manifold $M^n$, where $\forall x\in B(q,R)$
and $v_{x}$ is radial unit tangent vector at $x$ defined as
(\ref{radial-V}), then
 \begin{eqnarray} \label{1-16}
\lambda_{1,\phi}(B(q,R))\leq
\lambda_{1}\left(\mathcal{B}_{n+1}(\alpha/n,R)\right)
 \end{eqnarray}
 with the weight function $w$ satisfying $w=e^{-\phi}$. That is to
 say, the first Dirichlet eigenvalue $\lambda_{1,\phi}(B(q,R))$ of
 the Witten-Laplacian $\Delta_{\phi}$ on the geodesic ball $B(q,R)\subseteq M^{n}$
 is bounded from above by the first Dirichlet eigenvalue $\lambda_{1}\left(\mathcal{B}_{n+1}(\alpha/n,R)\right)$ of the
 Laplacian on a geodesic ball $\mathcal{B}_{n+1}(\alpha/n,R)$ in the
 $(n+1)$-dimensional space form $\mathbb{M}^{n+1}(\alpha/n)$.
 Specially, in \cite[Section 4]{SAG}, he also mentioned that if
 furthermore
 $\alpha>0$, then using a Bishop-type volume comparison theorem
 \cite[Theorem 4.1]{SAG}, one knows that
 $d_{M^n}(q,\hat{q})\leq\pi\sqrt{\frac{n}{\alpha}}$, $\forall\hat{q}\in
 M^{n}$. Hence, if $R>\pi\sqrt{\frac{n}{\alpha}}$, then
 $B(q,R)=M^{n}$ and
 $\mathcal{B}_{n+1}(\alpha/n,R)=\mathbb{M}^{n+1}(\alpha/n)$, which
 implies $\lambda_{1,\phi}(B(q,R))=0=
\lambda_{1}\left(\mathcal{B}_{n+1}(\alpha/n,R)\right)$, and the
comparison (\ref{1-16}) holds trivially. That is to say, the
eigenvalue comparison (\ref{1-16}) does not have the rigidity
characterization for the equality ``$=$", however in some special
cases the equality can be achieved.

Recently, for a complete Riemannian $n$-manifold $M^{n}$ and a point
$q\in M^{n}$, if the modified Ricci curvature $\mathrm{Ric}_{w}$
satisfies
\begin{eqnarray*}
\mathrm{Ric}_{w}(v_{x},v_{x})\geq\kappa(r(x)),
\end{eqnarray*}
where $\forall x\in B(q,R)$, $r(x)=d_{M^{n}}(q,x)$, and
$\kappa:(0,R)\rightarrow \mathbb{R}$ is a continuous function of the
radial distance parameter, then we \cite{CMM} can improve Setti's
eigenvalue comparison (\ref{1-16}) to a more general setting, and
the corresponding model space becomes $(n+1)$-dimensional
spherically symmetric manifolds.
  }
\end{remark}

\begin{remark}
\rm{ The reason we do not put also the eigenvalue comparison results
\cite{CMM} in this paper is that (\ref{ECT-1})-(\ref{ECT-2}) are the
comparisons between eigenvalues of the same type (i.e. the first
Dirichlet eigenvalues of the Witten-Laplacian on different geodesic
balls), while the ones in \cite{CMM} are comparisons between
Dirichlet eigenvalues of different elliptic differential operators.
}
\end{remark}

\begin{remark}
\rm{ The study of extremum problems is one of core topics in
Spectral Geometry, and we wish to refer readers to A. Henrot's book
\cite{HA} for a relatively comprehensive knowledge about this topic.
As briefly mentioned before, for an $n$-dimensional($n\geq2$)
Riemannian manifold  $M^{n}$ and a bounded domain $\Omega$ on $
M^n$, if the potential function $\phi$ was assumed to be radial,
suitably concave or convex, then recently R. F. Chen and J. Mao
\cite{CM} successfully and completely solved the extremum problem
\begin{eqnarray*}
 \min\{\lambda_{k,\phi}(\Omega)|\Omega\subset M^{n}, |\Omega|_{n,\phi}=const.\}
\end{eqnarray*}
 when $k=1, 2$ and $M^{n}\equiv\mathbb{M}^{n}(K)$, i.e. the
$n$-dimensional space form with constant sectional curvature $K$.
This conclusion definitely extends those classical spectral
isoperimetric inequalities, i.e. the Faber-Krahn inequality, the
Hong-Krahn-Szeg\H{o} inequality of the Laplacian under the
constraint of fixed volume. From the viewpoint of extremum problems,
our Theorems \ref{theo-1}-\ref{theo-2} here give characterizations
for the extremum value of the functional
\begin{eqnarray*}
B(q,r_0) \mapsto\lambda_{1,\phi}(B(q,r_0))
\end{eqnarray*}
provided the potential function $\phi$ satisfying \textbf{Property
1}, where $B(q,r_0)$ is the geodesic ball with center $q$ and fixed
radius $r_0$ on a given complete Riemannian $n$-manifold. Speaking
in other words, if the potential function $\phi$ satisfies
\textbf{Property 1}, then the value of the first Dirichlet
eigenvalue $\lambda_{1,\phi}(B(q,r_0))$ of the Witten-Laplacian
would change as long as the radial (Ricci or sectional) curvature
with respect to $q$ changes.

 Comparing with our research experience in
\cite{CM}, it is so surprising that except the radial curvature
assumptions, we can get the Cheng-type eigenvalue comparisons in
Theorems \ref{theo-1}-\ref{theo-2} only assuming that the potential
function $\phi$ satisfies \textbf{Property 1}.
 }
\end{remark}

\begin{remark}
\rm{ Specially, when $n=2$, $M^2$ is a complete surface, and as
mentioned in Section \ref{Sect2} below, for any $q\in M^2$, the
radial Ricci and sectional curvatures become exactly the Gaussian
curvature, and one can always find sharp lower and upper bounds
$\kappa_{\pm}(q,s)$ for the curvature assumptions with respect to
the given point $q$, and then it follows from Theorems
\ref{theo-1}-\ref{theo-2} that the functional
 \begin{eqnarray*}
 B(q,s) \mapsto\lambda_{1,\phi}(B(q,s))
 \end{eqnarray*}
would attain its minimal value if the Gaussian curvature in the
geodesic disk $B(q,s)$ vanishes identically, where $s\in(0,\infty)$
and the potential function $\phi$ satisfies \textbf{Property 1}. }
\end{remark}

This paper is organized as follows. Some preliminaries and useful
facts will be introduced in Section \ref{Sect2}. A proof of Theorem
\ref{maintheorem} (i.e. a weighted heat kernel comparison theorem
for the weighted heat equation on complete manifolds with radial
curvatures bounded), together with some basic properties related to
the weighted heat kernel, will be given in Section \ref{Sect3}. As
an application, in Section \ref{Sect4}  we will mainly use Theorem
\ref{maintheorem} to give a proof for two Cheng-type eigenvalue
comparisons (\ref{ECT-1}) in Theorem \ref{theo-1} and (\ref{ECT-2})
in Theorem \ref{theo-2}. The detailed calculations for the weighted
heat kernel given in Example \ref{example4} will be given in
Appendix A. We shall give another proof for eigenvalue comparisons
(\ref{ECT-1})-(\ref{ECT-2}) and their rigidity characterizations
(i.e. Theorems \ref{theo-1} and \ref{theo-2}) in Appendix B.

On $1^{\mathrm{st}}$ March 2026, we posted the first version of this
paper on arXiv. However, on $12^{\mathrm{th}}$ March 2026, we
noticed the work \cite{SPHC} which was just posted on arXiv on
$9^{\mathrm{th}}$ March 2026 (i.e. almost three days ago), and has
cited my two works \cite{DM, JM4}. We found that the assertion in
Theorem \ref{theo-2} can be improved to the case of weighted
$p$-Laplacian by making some adjustments to the argument in the
proof of \cite[Theorem 1.4]{SPHC}. We wish to write this content as
an Appendix C in the second version of this paper.

\section{Preliminaries and some useful facts} \label{Sect2}
\renewcommand{\thesection}{\arabic{section}}
\renewcommand{\theequation}{\thesection.\arabic{equation}}
\setcounter{equation}{0}

Given an $n$-dimensional ($n\geq2$) complete Riemannian manifold
$M^{n}$ with the metric $\langle\cdot,\cdot\rangle$, for any point
$q\in M^{n}$, the exponential map $\exp_q:\mathcal{D}_p \to
M^{n}\setminus Cut(q)$ gives a diffeomorphism from a star-shaped
open set $\mathcal{D}_{p}$ of the tangent space $T_{q}M^{n}$ with
\begin{eqnarray*}
\mathcal{D}_{q}=\left\{s\xi|~0\leq{s}<d_{\xi},~\xi\in{S^{n-1}_{q}}\right\}
\end{eqnarray*}
 onto the open set  $M^{n}\setminus Cut(q)$,
where $Cut(q)$ is the cut locus  of  $q$,  $S^{n-1}_{q}$ denotes the
unit sphere of $T_{q}M^n$, and $d_{\xi}$ is
 defined by
\begin{eqnarray} \label{add-extra1}
&&d_{\xi}=d_{\xi}(q) : =
\sup\{r>0|~\gamma_{\xi}(s)=\gamma_{(q,\xi)}(s):=
\exp_q(s\xi)~{\rm{is~ the~ unique}} \nonumber\\
&&\qquad\qquad\qquad \qquad\qquad\qquad {\rm{minimal
~geodesic~joining}}~q ~ {\rm{and}}~\gamma_{\xi}(r)\}.
\end{eqnarray}
For a fixed unit vector $\xi\in S^{n-1}_{q}\subset T_{q}M^n$, let
$\xi^{\bot}$ be the orthogonal complement of $\{\mathbb{R}\xi\}$ in
$T_{q}M^n$ and $\tau_{s}:
T_{q}M^n\rightarrow{T_{\exp_{q}(s\xi)}M^n}$ be the parallel
translation along $\gamma_{\xi}$. The path of linear transformations
$\mathbb{A}(s,\xi):\xi^{\bot}\rightarrow{\xi^{\bot}}$ is given by
 \begin{eqnarray*}
\mathbb{A}(s,\xi)\eta=(\tau_{s})^{-1}Y_{\eta}(s),
 \end{eqnarray*}
where $Y_{\eta}(s)=d(\exp_q)_{(s\xi)}(s\eta)$ is the Jacobi field
along $\gamma_{\xi}$ satisfying $Y_{\eta}(0)=0$, and
$(\nabla_{s}Y_{\eta})(0)=\eta$. This operator satisfies the Jacobi
equation
 $\mathbb{A}''+\mathcal{R}\mathbb{A}=0$ with initial conditions
 $\mathbb{A}(0,\xi)=0$, $\mathbb{A}'(0,\xi)=I$, where
$\mathcal{R}(s)$ is the self-adjoint operator on $\xi^{\bot}$, $
\mathcal{R}(s)\eta=(\tau_{s})^{-1}R(\gamma'_{\xi}(s),\tau_{s}\eta)
\gamma'_{\xi}(s)$ with $R$ the curvature tensor on $M^n$. The trace
of the later operator is just
 the radial Ricci tensor
$$\mathrm{Ric}_{\gamma_{\xi}(s)}
(\gamma'_{\xi}(s), \gamma'_{\xi}(s))$$
 along unit speed geodesics starting from
 $q$.
Gauss's lemma tells us that the
 Riemannian metric on $M^{n}\setminus Cut(q)$ in
the spherical geodesic coordinate chart can be expressed as
\begin{eqnarray} \label{5-1-1}
g:=
\langle\cdot,\cdot\rangle_{\exp_{q}(s\xi)}=ds^{2}+|\mathbb{A}(s,\xi)d\xi|^{2},
\quad \forall ~s\xi \in\mathcal{D}_{q}.
 \end{eqnarray}
We consider  the metric components  $g_{ij}(s,\xi)$, $i,j\geq 1$, in
a coordinate system $\{s, \xi_a\}_{a\geq 2}$ formed by fixing  an
orthonormal basis  $\{\eta_a\}_{a\geq 2}$ of
 $\xi^{\bot}=T_{\xi}S^{n-1}_q$, and extending it to a local frame $\{\xi_{a}\}_{a\geq 2}$ of
$S_q^{n-1}$. Define  a function $J>0$  on $\mathcal{D}_{q}$ by
\begin{equation} \label{J}
J^{n-1}=\sqrt{|g|}:=\sqrt{\det[g_{ij}]},
\end{equation}
that is, $\sqrt{|g|}=\det\mathbb{A}(t,\xi)$, and
 $dv=J^{n-1}dsd \sigma$ is the volume element of $M^{n}\setminus Cut(q)$,
where $d\sigma$ denotes the $(n-1)$-dimensional volume element on
$\mathbb{S}^{n-1}\equiv S_{q}^{n-1}\subseteq{T_{q}M^{n}}$. Since
$\tau_{s}: T_{q}M^n\rightarrow{T_{\exp_{q}(s\xi)}M^n}$ is an
isometry, one has
\begin{eqnarray*}
\langle
d(\exp_q)_{(s\xi)}(s\eta_{a}),d(\exp_q)_{(s\xi)}(s\eta_{b})\rangle=\langle\mathbb{A}(s,\xi)(\eta_{a}),\mathbb{A}(s,\xi)(\eta_{b})\rangle,
\end{eqnarray*}
and so $ \sqrt{|g|}=\det\mathbb{A}(s,\xi)$. Therefore, by applying
(\ref{5-1-1}) and (\ref{J}), it follows that the volume
$\mathrm{vol}(B(q,r))$ of the geodesic ball $B(q,r)$ on $M^{n}$ is
given by
\begin{eqnarray} \label{volume-a1}
\mathrm{vol}(B(q,r))= \int_{S_{q}^{n-1}}\int_0^{\min\{r,
d_{\xi}\}}\sqrt{|g|}dsd\sigma=\int_{S_{q}^{n-1}}\left(\int_0^{\min\{r,
d_{\xi}\}} \det(\mathbb{A}(s,\xi)) ds\right) d\sigma.
\end{eqnarray}
 Let
 \begin{eqnarray} \label{inj-R}
\mathrm{inj}(q):=d_{M^n}(q,Cut(q))=\min\limits_{\xi\in
S_{q}M^{n}}d_{\xi}
 \end{eqnarray}
 be the injectivity radius at $q$. Generally, one has $B(q,\mathrm{inj}(q))\subseteq
 M^{n}\setminus\mathrm{Cut}(q)$. Besides, for $r<\mathrm{inj}(q)$,
 one has
 \begin{eqnarray*}
\mathrm{vol}(B(q,r))=\int_{0}^{r}
\int_{S_{q}^{n-1}}\det(\mathbb{A}(s,\xi))d\sigma ds.
 \end{eqnarray*}
If $r(x)=d_{M^n}(x,q)$ denotes the intrinsic distance to the point
$q$, $x\in M^{n} \setminus (Cut(q)\cup \{q\})$, then the unit vector
field
 \begin{eqnarray} \label{radial-V}
v_x=\nabla r(x)
 \end{eqnarray}
is the  radial unit tangent vector at $x$, according to the
definition given in \cite{KK}. This is because for any $\xi\in
S_{q}^{n-1}$ and $s_{0}>0$, one has $\nabla
r(\gamma_{\xi}(s_0))=\gamma'_{\xi}(s_0)$ when the point
$\gamma_{\xi}(s_0)=\exp_{q}(s_{0}\xi)$ is away from the cut locus of
$q$ (see e.g. \cite{GrA}). Set
\begin{eqnarray} \label{key-def1}
\ell(q):=\sup\limits_{x\in M^n}r(x).
\end{eqnarray}
One has $\ell(q)=\max_{\xi\in S_{q}M^{n}}d_{\xi}$ (see page 704 of
\cite[Section 2]{fmi} for the explanation). Clearly,
 \begin{eqnarray*}
\ell(q)\geq\mathrm{inj}(q).
\end{eqnarray*}
We also recall the following  inequality  about $r(x)$ (cf.
\cite{PP}, Prop.\ 39, and pp.\ 266-267), with $\partial_r=\nabla r$
as a vector of differentiation (see Prop.\ 7 on p.\ 47  of the same
reference),
$$
\partial_r\Delta r \leq \partial_r\Delta r+\|\mathrm{Hess}\, r\|^2
=-\mathrm{Ric}(\partial_r,
\partial_r),\quad\quad\mbox{with}\quad \Delta
r=\partial_r\ln(\sqrt{|g|}),
$$
which implies that
\begin{eqnarray}
&&
J''+\frac{1}{(n-1)}\,\mathrm{Ric}{(\gamma_{\xi}'(t),\gamma_{\xi}'(t))}
\, J\leq 0,  \label{J''}\\
&&J(0,\xi)=0,\quad J'(0,\xi)=1 \label{J0}.
\end{eqnarray}
(\ref{J''}) and (\ref{J0}) make a fundamental role in the derivation
of the so-called \emph{generalized Bishop's volume comparison
theorem I} (see Theorem \ref{BishopI} below), which has been proven
in our previous works \cite{fmi, JM3}.

We need the following concept:

\begin{defn} \label{def-ssm}
A domain $\Omega=\exp_q([0,l)\times{S}_q^{n-1}) \subset
M^{n}\backslash Cut(q)$, with $l<inj(p)$,   is said to be
spherically symmetric with respect to a point $q\in \Omega$, if
 and only if
the matrix $\mathbb{A}(s,\xi)$ satisfies $\mathbb{A}(s,\xi)=f(s)I$,
for a function $f\in{C^{2}([0,l))}$,  with   $f(0)=0$, $f'(0)=1$,
and  $f|_{(0,l)}>0$.
\end{defn}

Obviously, on the domain $\Omega$ given as in Definition
\ref{def-ssm} the Riemannian metric of $M^n$ can be expressed by
\begin{eqnarray} \label{metric-ssm}
 \langle\cdot,\cdot\rangle=ds^{2}+f^{2}(s)|d\xi|^{2}, \qquad \forall
~\xi \in S_{q}M^{n},~0\leq s<l,
 \end{eqnarray}
where $|d\xi|^{2}$ denotes the round metric on the unit sphere
$\mathbb{S}^{n-1}\subseteq\mathbb{R}^n$. A standard model for
spherically symmetric manifolds is given by the quotient manifold of
the warped product $[0,l)\times_{f}\mathbb{S}^{n-1}$ equipped with
the metric (\ref{metric-ssm}), where the warping function $f$
satisfies the conditions in Definition \ref{def-ssm}, and all pairs
$(0,\xi)$ are identified with a single point $q$ (see \cite{BB}). We
wish to refer readers to e.g. \cite{MDW, PP} for the notion and some
properties of warped product manifolds. In fact, an $n$-dimensional
spherically symmetric manifold $M^{\ast}$ satisfying those
conditions in Definition \ref{def-ssm} is actually a quotient space
$M^{\ast}=\left([0,l)\times_{f(s)}\mathbb{S}^{n-1}\right)/\thicksim$
with the equivalent relation ``$\thicksim$" given by
 \begin{eqnarray*}
(s,\xi)\thicksim(t,\eta)\Longleftrightarrow \left\{
\begin{array}{ll}
s=t~\mathrm{and}~\xi=\eta,~~~\mathrm{or}\\
s=t=0.
\end{array}
\right.
 \end{eqnarray*}
This equivalent relation is natural, and usually one can just use
$[0,l)\times_{f(s)}\mathbb{S}^{n-1}$ to represent this quotient
space. That is to say, $M^{\ast}=[0,l)\times_{f(s)}\mathbb{S}^{n-1}$
with $f$ satisfying conditions in Definition \ref{def-ssm} is a
spherically symmetric manifold with
$q=\{0\}\times_{f}\mathbb{S}^{n-1}$ the base point and
(\ref{metric-ssm}) as its metric. For $M^{\ast}$ and $r<l$, by
(\ref{volume-a1}) it follows that the volume of the geodesic ball
$\mathscr{B}(q,r)\subset M^{\ast}$ is given by
\begin{eqnarray*}
\mathrm{vol}(\mathscr{B}(q,r))=w_{n}\int_{0}^{r}f^{n-1}(s)ds,
\end{eqnarray*}
and moreover, the volume of the boundary $\partial\mathscr{B}(q,r)$
is given by
\begin{eqnarray*}
\mathrm{vol}(\partial\mathscr{B}(q,r))=w_{n}f^{n-1}(r),
\end{eqnarray*}
where $w_n$ denotes the $(n-1)$-volume of the unit sphere
$\mathbb{S}^{n-1}$.

We also need the following concepts, which can be found e.g. in our
previous works \cite{fmi, JM3, JM4}.

\begin{defn} \label{def-5.2}
 Given a continuous function
$\kappa:[0,l)\rightarrow \mathbb{R}$, we say that $M^{n}$ has a
radial Ricci curvature lower bound $(n-1)\kappa$ at the point $q$ if
\begin{eqnarray}  \label{def-cur1}
\mathrm{Ric}(v_x,v_x)\geq(n-1)\kappa(r(x)), \quad\forall x\in
M^{n}\backslash Cut(q)\cup \{q\} ,
\end{eqnarray}
where $\mathrm{Ric}$ is the Ricci curvature of $M^{n}$.
\end{defn}

\begin{defn} \label{def-5.3}
Given a continuous function $\kappa:[0,l)\rightarrow \mathbb{R}$, we
say that $M^n$ has a radial sectional curvature upper bound $\kappa$
at the point $q$ if
\begin{eqnarray}  \label{def-cur2}
\mathscr{K}(v_{x},V)\leq\kappa(r(x)),  \quad \forall x\in
M^{n}\setminus Cut(q)\cup \{q\} ,
\end{eqnarray}
where $V\perp{v_{x}}$, $V\in{S^{n-1}_{x}}\subseteq{T_{x}M^{n}}$, and
$\mathscr{K}(v_{x},V)$ denotes the sectional curvature of the plane
spanned by $v_{x}$ and $V$.
\end{defn}

\begin{remark}
\rm{ As pointed out in \cite[Remark 2.4]{fmi} or \cite[Remark
2.5]{JM4}, since the radial distance is $r(x)=d_{M^n}(x,q)$ for
$x=\gamma_{\xi}(s)=\exp_{q}(s\xi)$, the parameter $s$ may be seen as
the argument of the continuous function
$\kappa:[0,l)\rightarrow\mathbb{R}$ in Definition \ref{def-5.2} or
Definition \ref{def-5.3}. Therefore, one has
$\frac{d}{ds}|_{x}=\nabla r(x)=v_{x}$, which implies
(\ref{def-cur1}) and (\ref{def-cur2}) become
 \begin{eqnarray*}
\mathrm{Ric}(\frac{d}{ds},\frac{d}{ds})\geq(n-1)\kappa(s)
 \end{eqnarray*}
and
 \begin{eqnarray*}
\mathscr{K}(\frac{d}{ds},V)\leq\kappa(s),
 \end{eqnarray*}
respectively. By the way, since $x\in M^{n}\backslash Cut(q)\cup
\{q\} $, one knows that the geodesic
$\gamma_{\xi}(s)=\exp_{q}(s\xi)$, emanating from $q$ and joining
$x$, should be unit-speed and minimizing. So, if a given manifold
$M^n$ satisfies (\ref{def-cur1}) (resp., (\ref{def-cur2})), then we
say that $M^n$ has a radial Ricci curvature lower bound
$(n-1)\kappa$ (resp., a radial sectional curvature upper bound
$\kappa$) along any unit-speed minimizing geodesic starting from the
point $q\in M^n$, that is to say, for convenience and
simplification, its radial Ricci curvature is bounded from below
with respect to $q$ (resp., radial sectional curvature is bounded
from above with respect to $q$) by the continuous function
$(n-1)\kappa$ (resp., $\kappa$).
 }
\end{remark}

\begin{remark} \label{remark2-5}
\rm{ Similarly, one can define the concept that the radial
Ricci/sectional curvature has an upper/lower bound (with respect to
some point) which is given by a continuous function of the radial
distance parameter. By the way, the concept that the radial (Ricci
or sectional) curvature is bounded at some point has been improved
to a more general setting, i.e. the integral radial (Ricci or
sectional) curvature is bounded with respect to some point -- see
\cite[Definitions 2.1 and 2.2]{JM5}. In fact, the quantities defined
in \cite[Definitions 2.1 and 2.2]{JM5} can be used to measure the
deviation
 of the radial (Ricci or sectional) curvature (with
respect to some point $q\in M^n$) from a prescribed continuous
function $(n-1)\kappa(r(x))$ or  $\kappa(r(x))$, where
$r(x)=d_{M^n}(x,q)$ denotes the intrinsic distance to $q$ on the
Riemannian $n$-manifold $M^n$. }
\end{remark}

Now, we wish to point out an important truth, that is, for a
prescribed $n$-dimensional complete manifold $M^n$, one can always
construct the optimal continuous functions $\kappa_{\pm}(q,s)$ with
respect to a point $q\in M^{n}$, satisfying Definitions
\ref{def-5.2} and \ref{def-5.3} respectively. This truth has already
been shown in our previous works \cite[p. 706]{fmi}, \cite[pp.
378-379]{JM4}. However, for convenience to readers, we wish to
repeat the construction here. Recall that for $\xi\in
S_{q}^{n-1}\subseteq T_{q}M^{n}$, $\gamma_{\xi}(s)=\exp_{q}(s\xi)$
and its derivative $\gamma'_{\xi}(s)$ depend smoothly on the
variables $(s,\xi)$. Set
$\mathbb{D}_{q}:=\{(s,\xi)\in[0,\infty)\times S_{q}^{n-1}|0\leq
s<d_{\xi} \}$ with closure
$\overline{\mathbb{D}}_{q}:=\{(s,\xi)\in[0,\infty)\times
S_{q}^{n-1}|0\leq s\leq d_{\xi} \}$. Then we can define
 \begin{eqnarray} \label{slb}
 \kappa_{-}(q,s):=
\min\limits_{\{\xi|(s,\xi\in\overline{\mathbb{D}}_{q})\}}
 \frac{\mathrm{Ric}_{\gamma_{\xi}(s)}(\gamma'_{\xi}(s),\gamma'_{\xi}(s))}{n-1},\qquad
 0\leq s<\ell(q)
 \end{eqnarray}
and
 \begin{eqnarray} \label{sub}
 \kappa_{+}(q,s):=\max\limits_{\{(\xi,V)|\gamma'_{\xi}(s)\perp V,
 |V|=1\}}\mathscr{K}_{\gamma_{\xi}(s)}(\gamma'_{\xi}(s),V), \qquad 0\leq
 s<\mathrm{inj}(q).
\end{eqnarray}
If $\ell(q)<+\infty$, the above two functions can be continuously
extended to $s=\ell(q)$ and $s=\mathrm{inj}(q)$ respectively.
Furthermore, if $M^n$ is closed, then the injectivity radius
$\mathrm{inj}(M^n)=\min_{q\in M^{n}}\mathrm{inj}(q)$ is a positive
constant, and then in this case by applying the uniform continuity
of continuous functions on compact sets, one knows that
$\kappa_{\pm}(q,s)$ are continuous. Hence, for a bounded domain
$\Omega\subseteq M^{n}$, one can always find optimally continuous
bounds $\kappa_{\pm}(q,s)$ for the radial sectional and Ricci
curvatures with respect to the given point $q\in\Omega$. This
implies that the assumptions on curvatures in Definitions
\ref{def-5.2} and \ref{def-5.3} are natural and feasible. Specially,
when $n=2$, $M^2$ is a complete surface, and then
$\kappa_{\pm}(q,s)$ defined by (\ref{slb}) and (\ref{sub}) are
actually the minimum and maximum of the Gaussian curvature on
geodesic circles centered at $q$ of radius $s$ on $M^2$.

As explained in \cite[pp. 706-707]{fmi} or \cite[p. 379]{JM4}, by
using the properties of warped product manifolds, one knows that the
radial sectional curvature, and the radial component of the Ricci
tensor of the spherically symmetric manifold
$[0,l)\times_{f(s)}\mathbb{S}^{n-1}$ with the base point $q$ are
given by
\begin{eqnarray*}
\mathscr{K}\left(V,\frac{d}{ds}|_{\exp_{q}(s\xi)}\right)=R\left(\frac{d}{ds}|_{\exp_{q}(s\xi)},V,\frac{d}{ds}|_{\exp_{q}(s\xi)},V\right)
=-\frac{f''(s)}{f(s)}
\end{eqnarray*}
for $V\in T_{\xi}\mathbb{S}^{n-1}$, $|V|=1$, and
 \begin{eqnarray*}
\mathrm{Ric}\left(\frac{d}{ds}|_{\exp_{q}(s\xi)},\frac{d}{ds}|_{\exp_{q}(s\xi)}\right)=-(n-1)\frac{f''(s)}{f(s)}.
 \end{eqnarray*}
So, in this setting, Definition \ref{def-5.2} (resp., Definition
\ref{def-5.3}) is satisfied with equality in (\ref{def-cur1})
(resp., (\ref{def-cur2})) and correspondingly
$\kappa(s)=-f''(s)/f(s)$. From the above two facts, we know that in
order to define curvature tensor away from $q$, we need to require
$f\in C^{2}((0,l))$. Furthermore, if $f''(0)=0$ and $f$ is $C^3$ at
$s=0$, then one has $\lim_{s\rightarrow0}\kappa(s)=-f'''(0)$.
Although $\nabla r(x)$ is not defined at $x=q$, $\kappa(s)$ is
usually required to be continuous at $s=0$, which is equal to
require $f$ to be $C^3$ at $s=0$.

Now, we wish to recall two Bishop-type volume comparison theorems,
which correspond to Theorem 3.3, Corollary 3.4, and Theorem 4.2 in
\cite{fmi} (equivalently, Theorem 2.2.3, Corollary 2.2.4 and Theorem
2.3.2 in \cite{JM3}). Define a quantity on $M^{n}\setminus Cut(q)$
by
 \begin{eqnarray*}
\theta(s,\xi)=\left[\frac{J(s,\xi)}{f(s)}\right]^{n-1}.
\end{eqnarray*}
Let $a(\xi):=\min \{d_{\xi}, r_0\}$ on $M^{n}$, and we always assume
$r_0< \min\{ \ell(q), l\}$. One has the following facts:

\begin{theorem}
[Generalized Bishop's comparison theorem I] {\rm{(see \cite{fmi,
JM3})}} \label{BishopI} Given $\xi \in S_q^{n-1}$, and a model space
$M^-=[0,l)\times_f \mathbb{S}^{n-1}$,
 under the curvature assumption on the radial Ricci tensor,
$\mathrm{Ric}(v_x,v_x)\geq -(n-1)f''(s)/f(s)$, for
$x=\gamma_{\xi}(s)=\exp_{q}(s\xi)$
 with $s<\min\{d_{\xi}, l\}$ (resp.\ with $s<\min\{a_{\xi}, l\}$)  the function
$\theta (s,\xi)$ is nonincreasing in $s$. In particular, for all
$s<\min\{d_\xi, l\}$~ (resp.\ $s< \min \{a(\xi), l\}$) we have
$J(s,\xi)\leq f(s)$. Furthermore, this inequality is strict for all
$s\in (s_0,s_1]$, with $0\leq s_0<s_1<\min \{d_{\xi},l\}$,
 if the above  curvature
assumption holds with
 a strict inequality for $s$ in the same interval. Besides, we have
  $$\mathrm{vol}(B(q,r_0))\leq\mathrm{vol}(\mathscr{B}_n(q^{-},r_0)),$$
with equality if and only if $B(q,r_0)$ is isometric to
$\mathscr{B}_n(q^{-},r_0)$.
\end{theorem}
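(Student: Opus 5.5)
The plan is to run the classical Bishop argument one geodesic at a time, with the model warping function $f$ in place of the constant-curvature solution. The starting point is the differential inequality (\ref{J''}) with initial data (\ref{J0}). Recall how it arises: set $\Delta r=\partial_r\ln\sqrt{|g|}=(n-1)J'/J$ and use the Riccati-type inequality $\partial_r\Delta r\leq -\mathrm{Ric}(\partial_r,\partial_r)-\|\mathrm{Hess}\, r\|^2$. Then apply the Cauchy--Schwarz bound $\|\mathrm{Hess}\, r\|^2\geq (\Delta r)^2/(n-1)$, which is valid because $\mathrm{Hess}\, r$ annihilates $\partial_r$. Under the hypothesis $\mathrm{Ric}(v_x,v_x)\geq -(n-1)f''/f$, inequality (\ref{J''}) becomes, along $\gamma_\xi$ for $s<\min\{d_\xi,l\}$,
\begin{eqnarray*}
J''(s,\xi)-\frac{f''(s)}{f(s)}J(s,\xi)\leq 0 .
\end{eqnarray*}

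\textbf{Step 1 (Wronskian).} Set $W:=J'f-Jf'$. Then
\begin{eqnarray*}
W'=J''f-Jf''\leq \frac{f''}{f}Jf-Jf''=0 .
\end{eqnarray*}
Since $J(0)=f(0)=0$ and $J'(0)=f'(0)=1$, we get $W(s)\to 0$ as $s\to0^+$. Hence $W\leq 0$, that is $(J/f)'=W/f^2\leq 0$. So $J/f$, and therefore $\theta=(J/f)^{n-1}$, is nonincreasing in $s$.

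By l'H\^opital, $J/f\to J'(0)/f'(0)=1$ as $s\to 0^+$, so $J(s,\xi)\leq f(s)$. The same argument works with $a(\xi)$ in place of $d_\xi$. For strictness, suppose the curvature inequality is strict on $(s_0,s_1]$. Then $W'<0$ there, so $W<0$ on $(s_0,s_1]$. Consequently $J/f$ is strictly decreasing on that interval and starts at a value $\leq 1$, which gives $J<f$ on $(s_0,s_1]$.

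\textbf{Step 2 (volume).} Insert $J\leq f$ into (\ref{volume-a1}):
\begin{eqnarray*}
\mathrm{vol}(B(q,r_0))=\int_{S^{n-1}_q}\int_0^{a(\xi)}J^{n-1}\,ds\,d\sigma\leq\int_{S^{n-1}_q}\int_0^{r_0}f^{n-1}\,ds\,d\sigma=w_n\int_0^{r_0}f^{n-1}ds .
\end{eqnarray*}
The right-hand side is exactly $\mathrm{vol}(\mathscr{B}_n(q^-,r_0))$ by the model volume formula. Two facts are used here: $a(\xi)\leq r_0$, and $r_0<l$, which guarantees $f>0$ on the whole range of integration.

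\textbf{Step 3 (rigidity).} I expect this to be the main obstacle. Assume the volumes are equal. The two inequalities used in Step 2 must then both be equalities almost everywhere in $\xi$:
\begin{itemize}
\item $a(\xi)=r_0$, i.e. $d_\xi\geq r_0$;
\item $J(s,\xi)=f(s)$ for $s<r_0$.
\end{itemize}
Because $\xi\mapsto d_\xi$ is continuous, the first equality holds for every $\xi$. Hence $\mathrm{inj}(q)\geq r_0$, and $\exp_q$ is a diffeomorphism from the $r_0$-ball of $T_qM^n$ onto $B(q,r_0)$.

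Next, $J\equiv f$ forces $W\equiv0$, so equality holds throughout the derivation of (\ref{J''}). In particular equality in the Cauchy--Schwarz step gives
\begin{eqnarray*}
\mathrm{Hess}\, r=\frac{f'}{f}\,(g-dr\otimes dr) .
\end{eqnarray*}
Translating this into the Jacobi-operator language, $\mathbb{A}'\mathbb{A}^{-1}=(f'/f)I$. Together with $\mathbb{A}(0)=0$ and $\mathbb{A}'(0)=I$, this yields $\mathbb{A}(s,\xi)=f(s)I$.

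By (\ref{5-1-1}) the metric on $B(q,r_0)$ is then $ds^2+f^2(s)|d\xi|^2$. Composing $\exp_q$ with the corresponding polar-coordinate map of $M^-$ therefore gives the desired isometry $B(q,r_0)\cong\mathscr{B}_n(q^-,r_0)$. Conversely, if the two balls are isometric, their volumes are obviously equal.

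The delicate points are the measure-theoretic passage from almost-everywhere to everywhere equality, and the identification of the equality case in $\|\mathrm{Hess}\, r\|^2\geq(\Delta r)^2/(n-1)$. Both are handled by continuity of $d_\xi$ and of $J$ in $(s,\xi)$.
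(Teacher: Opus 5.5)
Your proposal is correct and follows essentially the same route as the proof the paper relies on; the paper cites \cite{fmi, JM3} rather than reproving the result. That route is a Sturm/Wronskian comparison of $J$ with $f$ driven by (\ref{J''})--(\ref{J0}), then integration in geodesic polar coordinates via (\ref{volume-a1}), and finally rigidity from the equality case, where $a(\xi)\equiv r_0$ and the umbilicity of $\mathrm{Hess}\, r$ force $\mathbb{A}(s,\xi)=f(s)I$. Your explicit use of $\|\mathrm{Hess}\, r\|^2\geq(\Delta r)^2/(n-1)$ is in fact the step needed to obtain (\ref{J''}), which the paper's displayed inequality leaves implicit.
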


\begin{theorem}
[Generalized Bishop's comparison theorem II] {\rm{(see \cite{fmi,
JM3})}} \label{BishopII}
 ~Suppose $M^n$ has a radial sectional curvature upper bound given by
 ~$\kappa(s)=-\frac{f''(s)}{f(s)}$ for ~$s<\beta\leq \min\{inj_c(q), l\}$,
where $inj_c(q)=\inf_{\xi}c_{\xi}$, with $\gamma_{\xi}(c_{\xi})$ a
first conjugate point along the geodesic
$\gamma_{\xi}(s)=\exp_{q}(s\xi)$. Then on $(0,\beta)$
\begin{eqnarray*}
\left[\frac{\sqrt{|g|}}{f^{n-1}}\right]'\geq0, \quad\quad
\sqrt{|g|}(s)\geq{f^{n-1}(s)},
 \end{eqnarray*}
and  equality occurs in the first inequality at $s_{0}\in(0,\beta)$
if and only if
 \begin{eqnarray*}
\mathcal{R}=-\frac{f''(s)}{f(s)}, \quad \mathbb{A}=f(s)I
 \end{eqnarray*}
 on all of $[0,s_{0}]$.
\end{theorem}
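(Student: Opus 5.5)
The plan is to run a Riccati (equivalently, Rauch-type) comparison along each radial geodesic $\gamma_{\xi}$ and then take traces. Fix $\xi\in S^{n-1}_q$. For $0<s<\beta\leq\mathrm{inj}_c(q)$ the operator $\mathbb{A}(s,\xi)$ on $\xi^{\bot}$ is invertible, since there is no conjugate point before $\beta$. So I will set $U(s):=\mathbb{A}'(s,\xi)\mathbb{A}(s,\xi)^{-1}$. Two standard facts follow from $\mathbb{A}''+\mathcal{R}\mathbb{A}=0$ and $\mathbb{A}(0)=0$, $\mathbb{A}'(0)=I$:
\begin{itemize}
\item $U$ is self-adjoint, because the Wronskian $\mathbb{A}'^{T}\mathbb{A}-\mathbb{A}^{T}\mathbb{A}'$ is constant and vanishes at $s=0$;
\item $U$ satisfies the matrix Riccati equation $U'+U^{2}+\mathcal{R}=0$, with $U(s)=\frac{1}{s}I+O(s)$ as $s\to0$.
\end{itemize}
Jacobi's formula then gives $\left(\ln\sqrt{|g|}\right)'=(\ln\det\mathbb{A})'=\mathrm{tr}\,U$. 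In the model, $u:=f'/f$ solves the scalar equation $u'+u^{2}+\kappa=0$, with $u(s)=\frac1s+O(s)$, and $\kappa=-f''/f$.

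Next I will set $W:=U-uI$. Subtracting the two Riccati equations and using $U^{2}-u^{2}I=W^{2}+2uW$ gives
\begin{eqnarray*}
W'+W^{2}+2uW=\kappa I-\mathcal{R}\geq0 ,
\end{eqnarray*}
where the sign comes from the hypothesis that every radial sectional curvature $\langle\mathcal{R}\eta,\eta\rangle$ (for $|\eta|=1$) is at most $\kappa(s)$. Multiplying by $f^{2}$ yields $(f^{2}W)'\geq-f^{2}W^{2}$, and $f^{2}W\to0$ as $s\to0$. A standard Riccati comparison argument (Eschenburg--Heintze type) then gives $W\geq0$ on $(0,\beta)$. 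Concretely, I would take the smallest eigenvalue $w(s)$ of $W$, which is Lipschitz, and show it cannot become negative: $w$ satisfies $w'\geq-w^{2}-2uw$ in the barrier sense, and $w$ vanishes to first order at $0$.

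Taking traces, $\left(\ln\frac{\sqrt{|g|}}{f^{n-1}}\right)'=\mathrm{tr}\,W\geq0$, which is the first inequality. Since $\sqrt{|g|}/f^{n-1}\to1$ as $s\to0$ (both behave like $s^{n-1}$), integrating gives $\sqrt{|g|}\geq f^{n-1}$.

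For the equality case, suppose $\mathrm{tr}\,W(s_0)=0$. Since $W\geq0$, this forces $W(s_0)=0$. To propagate this back to all of $[0,s_0]$, I will switch to the index form, which gives the rigidity more cleanly. For a unit $e\in\xi^{\bot}$, let $Y$ be the Jacobi field with $Y(0)=0$ and $Y(s_0)=\tau_{s_0}e$; then $\langle U(s_0)e,e\rangle=I_{s_0}(Y,Y)$. I then write $Y=\sum_a y_aE_a$ in a parallel frame and transplant it to the model as $\widetilde Y=\sum_a y_a\widetilde E_a$. Along the way one uses $|Y'|\geq|(|Y|)'|$ and $\langle\mathcal{R}Y,Y\rangle\leq\kappa|Y|^{2}$, giving
\begin{eqnarray*}
I_{s_0}(Y,Y)\geq\widetilde I_{s_0}(\widetilde Y,\widetilde Y)\geq\widetilde I_{s_0}(\widetilde J,\widetilde J)=u(s_0).
\end{eqnarray*}
Here $\widetilde J$ is the model Jacobi field with the same endpoints, and the second inequality is the model index lemma, which applies because $f>0$ on $(0,l)$. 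Equality $\langle W(s_0)e,e\rangle=0$ forces equality throughout. Hence $\widetilde Y=\widetilde J$, so $Y(s)=\frac{f(s)}{f(s_0)}\tau_{s}e$, and also $\langle\mathcal{R}Y,Y\rangle=\kappa|Y|^{2}$ on $[0,s_0]$. Doing this for every $e$ gives $\mathbb{A}=fI$ and $\mathcal{R}=\kappa I=-\frac{f''}{f}I$ on $[0,s_0]$. The converse direction is immediate.

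I expect the main obstacle to be making the ODE comparison rigorous near the singular point $s=0$, where $U$ and $u$ both blow up like $1/s$. This requires care with the asymptotics ($\mathcal{R}$ is continuous and $f\in C^{3}$ at $0$), so that $W=O(s)$ and the barrier argument can start.
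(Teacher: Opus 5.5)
Your proposal is correct. Part of it takes a different route from the argument behind this statement.

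The paper does not reprove the theorem; it quotes it from \cite{fmi, JM3}. There it is, as far as I can tell, obtained by adapting Chavel's proof of Bishop's comparison theorem II, with $S_\kappa$ replaced by $f$. That proof uses the index form throughout. For fixed $s_0\in(0,\beta)$ and an orthonormal basis $\{e_j\}$ of $\xi^{\bot}$, one writes $(\ln\sqrt{|g|})'(s_0)=\mathrm{tr}\,U(s_0)=\sum_j I_{s_0}(Y_j,Y_j)$. Each term is bounded below using $\mathscr{K}\leq\kappa$ and $|Y_j'|\geq|(|Y_j|)'|$, and then by $f'(s_0)/f(s_0)$ via the one-dimensional index lemma for $f$. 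Rigidity is read off from the equality cases of the same chain.

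Your rigidity step is exactly this chain, and it already proves your first half. Applied at an arbitrary $s_0$, rather than only at an equality point, it gives $\langle U(s_0)e,e\rangle\geq u(s_0)$ for every unit $e$, i.e.\ $W(s_0)\geq0$. So the matrix Riccati comparison, and the barrier analysis at $s=0$ that you flag as the main obstacle, can simply be dropped. In the index route the singular point enters only through the boundary term $(f'/f)y^{2}\to0$ (since $y(0)=0$) in the scalar identity $\int_0^{s_0}(y'^2-\kappa y^2)\,ds=\int_0^{s_0}f^{2}((y/f)')^{2}\,ds+\frac{f'(s_0)}{f(s_0)}y(s_0)^2$. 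This term is harmless.

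What the Riccati route gives you is the differential inequality $(f^{2}W)'\geq-f^{2}W^{2}$ for the full operator $W$ along the whole geodesic. That form extends to focal, submanifold or weighted Riccati comparisons. Its cost is the asymptotic analysis you describe, and that analysis does suffice: the least eigenvalue $z$ of $f^{2}W$ is $O(s^{3})$, so $-1/z\gtrsim s^{-3}$ dominates $\int_s f^{-2}\sim s^{-1}$ in the usual integration of $(1/z)'\leq f^{-2}$.

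Two minor points:
\begin{itemize}
\item With the full-frame transplant $\widetilde{Y}=\sum_a y_a\widetilde{E}_a$ you have $|\widetilde{Y}'|=|Y'|$, so the inequality $|Y'|\geq|(|Y|)'|$ is not needed. It is only needed if you reduce to the scalar $|Y|$, as Chavel does.
\item From $\mathbb{A}(s)\mathbb{A}(s_0)^{-1}=\frac{f(s)}{f(s_0)}I$ you still need $\mathbb{A}'(0)=I=f'(0)I$ to conclude $\mathbb{A}=fI$.
\end{itemize}
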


\begin{remark}
\rm{ For any measurable subset $\Gamma$ of $S_{q}M^{n}\subset
T_{q}M^{n}$, an annulus $A^{\Gamma}_{r,R}(q)$ can be well-defined as
follows
\begin{eqnarray*}
&&A^{\Gamma}_{r,R}(q):=\{x\in M^{n}|r\leq r(x)\leq R,\\
&& \qquad \qquad \qquad
{\rm{and~any~minimal~geodesic~\gamma~from}}~q~{\rm{to}}~x~{\rm{satisfies}}~\gamma'(0)\in\Gamma\}.
\end{eqnarray*}
Clearly, if $r=0$ and $R=r_0$, then the annulus
$A^{\Gamma}_{0,r_0}(q)$ would be exactly the geodesic ball
$B(q,r_0)$ on the given Riemannian $n$-manifold $M^{n}$.
 In \cite[Theorem 3.2]{JM-ex1} (resp., \cite[Theorem 4.3]{JM-ex1}),
 we have successfully obtained a relative volume comparison for the
 annulus $A^{\Gamma}_{r,R}(q)$ on a complete Riemannian $n$-manifold
 $M^n$ having a radial Ricci curvature lower bound
 $-(n-1)f''(s)/f(s)$ with respect to the point $q\in M^n$ (resp., a radial sectional curvature upper bound
 $-f''(s)/f(s)$ with respect to the point $q$). Moreover, those two
 volume comparison results would immediately imply our Bishop-type
 volume comparisons (i.e. Theorems \ref{BishopI} and \ref{BishopII} here)
 by suitably choosing radius parameters for annuluses -- see \cite[Corollaries 3.4 and
 4.4]{JM-ex1} for details. In 2023, we attempted to improve our
 previous volume comparison results mentioned here to the setting
 that the given complete Riemannian $n$-manifold has integral radial curvature
 bounds (i.e. a more weaker curvature assumption), and this attempt
 was successful -- see \cite[Section 3]{JM5} for details.
}
\end{remark}

As mentioned in Remark \ref{remark-1.19}, the assertion in Theorem
\ref{theo-1} will be generalized to the case of nonlinear weighted
$p$-Laplacian ($1<p<\infty$) in Appendix B. For convenience to
readers, here we wish to give some fundamental facts on the
Dirichlet eigenvalue problem of the weighted $p$-Laplacian. In fact,
for a bounded domain $\Omega$ (with boundary $\partial\Omega$) on
the given Riemannian $n$-manifold $M^{n}$, one can consider the
following eigenvalue problem
\begin{eqnarray} \label{eigenp-wpl}
\left\{
\begin{array}{ll}
\Delta_{p, \phi} u + \lambda|u|^{p-2}u=0 \qquad  &\mathrm{in}~~\Omega,  \\[0.5mm]
u= 0 \qquad &\mathrm{on}~~\partial \Omega,
\end{array} \right.
\end{eqnarray}
where $\Delta_{p,
\phi}:=e^{\phi}\mathrm{div}(e^{-\phi}|\nabla\cdot|^{p-2}\nabla\cdot)$,
$1<p<\infty$, denotes the weighted $p$-Laplacian. That is to say, in
a local coordinates chart $\{x^{1},x^{2},\cdots,x^{n}\}$ of $M^{n}$,
one has
\begin{eqnarray*}
\Delta_{p,
\phi}u=\frac{e^{\phi}}{\sqrt{\det[g_{ij}]}}\sum\limits_{i,j=1}^{n}\frac{\partial}{\partial
x^{i}}\left(e^{-\phi}\sqrt{\det[g_{ij}]}g^{ij}|\nabla
u|^{p-2}\frac{\partial u}{\partial x^{j}}\right),
\end{eqnarray*}
where $|\nabla u|^{2}=\sum_{i,j=1}^{n}g^{ij}\frac{\partial}{\partial
x^{i}}\frac{\partial}{\partial x^{j}}$, and $(g^{ij})=[g_{ij}]^{-1}$
is the inverse of the metric matrix. Clearly, if $\phi=const.$, then
$\Delta_{p, \phi}$ would reduce to the classical nonlinear
$p$-Laplacian
$\Delta_{p}:=\mathrm{div}(|\nabla\cdot|^{p-2}\nabla\cdot)$, and
correspondingly, one has
\begin{eqnarray*}
\Delta_{p}u=\frac{1}{\sqrt{\det[g_{ij}]}}\sum\limits_{i,j=1}^{n}\frac{\partial}{\partial
x^{i}}\left(\sqrt{\det[g_{ij}]}g^{ij}|\nabla u|^{p-2}\frac{\partial
u}{\partial x^{j}}\right).
\end{eqnarray*}
Besides, if $p=2$, then the weighted $p$-Laplacian $\Delta_{p,
\phi}$ would become the Witten-Laplacian $\Delta_{\phi}$ exactly.
For the eigenvalue problem (\ref{eigenp-wpl}), by making suitable
adjustments to the arguments in \cite{AA, AT, BK, GAPA, LP} and
using the variational principle, it is not hard to get:
\begin{itemize}
\item (\ref{eigenp-wpl}) has a positive weak solution, which is unique modulo
the scaling, in the space $W^{1,p}_{0,\phi}(\Omega)$, the completion
of the set $C^{\infty}_{0}(\Omega)$ of smooth functions compactly
supported on $\Omega$ under the Sobolev norm
\begin{eqnarray*}
\|u\|^{\phi}_{1,p}:=\left(\int_{\Omega}|u|^{p}e^{-\phi}dv+\int_{\Omega}|\nabla
u|^{p}e^{-\phi}dv\right)^{1/p}.
\end{eqnarray*}

\item By applying the
Ljusternik-Schnirelman principle, there exists a nondecreasing
sequence of nonnegative eigenvalues
$\{\lambda_{i,p}^{\phi}(\Omega)\}$ tending to $\infty$ as
$i\rightarrow\infty$.

\item The first Dirichlet eigenvalue $\lambda_{1,p}^{\phi}(\Omega)$  is simple, isolated, and eigenfunctions
associated with $\lambda_{1,p}^{\phi}(\Omega)$ do not change sign.
Besides, $\lambda_{1,p}^{\phi}(\Omega)$ can be characterized by
\begin{eqnarray} \label{chr-2}
\lambda_{1,p}^{\phi}(\Omega)=\inf\left\{\frac{\int_{\Omega}|\nabla
u|^{p}e^{-\phi}dv}{\int_{\Omega}|
u|^{p}e^{-\phi}dv}=\frac{\int_{\Omega}|\nabla
u|^{p}d\mu}{\int_{\Omega}|u|^{p}d\mu}\Big{|}u\neq0,~u\in
W^{1,p}_{0,\phi}(\Omega)\right\}.
\end{eqnarray}

\item The set of eigenvalues is closed. The eigenvalue
$\lambda_{2,p}^{\phi}(\Omega)$ is the second eigenvalue, i.e.
$\lambda_{2,p}^{\phi}(\Omega)=\inf\{\lambda|\lambda~{\rm{is~an~eigenvalue~of~(\ref{eigenp-wpl})~and}}~\lambda>\lambda_{1,p}^{\phi}(\Omega)\}$.

\end{itemize}
In \cite[Section 3]{DMWX}, we have given several lower bounds for
the first eigenvalue of weighted $p$-Laplacian on submanifolds with
locally bounded weighted mean curvature.

\section{Weighted heat kernel comparison theorems} \label{Sect3}
\renewcommand{\thesection}{\arabic{section}}
\renewcommand{\theequation}{\thesection.\arabic{equation}}
\setcounter{equation}{0}

Given an $n$-dimensional smooth metric measure space
$(M^{n},\langle\cdot,\cdot\rangle,e^{-\phi}dv)$ and any bounded
domain $\Omega$ in this space, by using the divergence theorem, it
is easy to get the following Green's formula
\begin{eqnarray*}
\int_{\Omega}h_{1}\Delta_{\phi}h_{2}e^{-\phi}dv=-\int_{\Omega}\langle\nabla
h_{1},\nabla h_{2}\rangle
e^{-\phi}dv=\int_{\Omega}h_{2}\Delta_{\phi}h_{1}e^{-\phi}dv
\end{eqnarray*}
for $h_{1},h_{2}\in C^{\infty}_{0}(\Omega)$, i.e.
\begin{eqnarray} \label{GF}
\int_{\Omega}h_{1}\Delta_{\phi}h_{2}d\mu=-\int_{\Omega}\langle\nabla
h_{1},\nabla h_{2}\rangle
d\mu=\int_{\Omega}h_{2}\Delta_{\phi}h_{1}d\mu.
\end{eqnarray}
It is not hard to check that Green's formula (\ref{GF}) remains
valid if the compact support condition for $h_{1},h_{2}$ was
replaced by the Neumann boundary condition on the boundary
$\partial\Omega$. By (\ref{GF}), one knows that the operator
$\Delta_{\phi}$ in\footnote{~In the weighted setting, similar as the
symbol $L^{1}_{\phi}$ in Section \ref{Sect1}, $L^{2}_{\phi}$ means
doing $L^{2}$ integrals with respect to the weighted measure
$d\mu=e^{-\phi}dv$. We will not mention this explanation in the
sequel any more.} $L^{2}_{\phi}$ with the space
$C^{\infty}_{0}(\Omega)$ is symmetric and nonpositive definite. It
is natural to ask:
\begin{itemize}
\item \emph{Whether the operator $\Delta_{\phi}|_{C^{\infty}_{0}(\Omega)}$ has a self-adjoint extension in $L^{2}_{\phi}$ and whether it is essentially
self-adjoint?}
\end{itemize}
This question is crucial for the eigenvalue problem
(\ref{eigenp-w1}) and the answer is affirmative. In fact, as
mentioned in A. Grigor'yan's survey \cite[Section 2]{GA}, by
suitably making
 adjustments to those related arguments in e.g. \cite{CY,
DEB, GMP, SRS}, it is not hard to get the following result.

\begin{theorem} \label{theo-3-1} {\rm{(\cite[Section 2]{GA})}}
The operator $\Delta_{\phi}|_{W^{2,2}_{0,\phi}(\Omega)}$ is a
self-adjoint nonpositive definite operator in $L^{2}_{\phi}$, where
the space $W^{2,2}_{0,\phi}(\Omega)$ is defined as
$W^{2,2}_{0,\phi}(\Omega):=\{\rho\in
W^{1,2}_{0,\phi}(\Omega)|\Delta_{\phi}\rho\in
L^{2}_{\phi}(\Omega)\}$. Moreover, the operator
$\Delta_{\phi}|_{W^{2,2}_{0,\phi}(\Omega)}$ is a unique self-adjoint
extension of $\Delta_{\phi}|_{C^{\infty}_{0}(\Omega)}$ with the
domain in $W^{1,2}_{0,\phi}(\Omega)$. If in addition $\Omega$ is
geodesically complete, then the operator
$\Delta_{\phi}|_{C^{\infty}_{0}(\Omega)}$ is essentially
self-adjoint, i.e., $\Delta_{\phi}|_{W^{2,2}_{0,\phi}(\Omega)}$ is a
unique self-adjoint extension of
$\Delta_{\phi}|_{C^{\infty}_{0}(\Omega)}$.
\end{theorem}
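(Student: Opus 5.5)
The plan is to follow the standard Friedrichs-extension argument (as in Davies or Chavel), adapted to the weighted measure $d\mu=e^{-\phi}dv$. First, I would introduce the quadratic form
\[
Q(h_1,h_2):=\int_{\Omega}\langle\nabla h_1,\nabla h_2\rangle\,d\mu
\]
on $C^{\infty}_{0}(\Omega)$. By Green's formula (\ref{GF}) we have $Q(h_1,h_2)=-\int_{\Omega}h_1\Delta_{\phi}h_2\,d\mu$, so $Q$ is symmetric and nonnegative. The form norm $\left(Q(h,h)+\|h\|^2_{L^2_\phi}\right)^{1/2}$ coincides with $\|h\|^{\phi}_{1,2}$. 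Hence $Q$ is closable, and its closure has domain exactly $W^{1,2}_{0,\phi}(\Omega)$.

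Second, by the representation theorem for closed nonnegative forms (Kato/Friedrichs), there is a unique nonnegative self-adjoint operator $A$ in $L^2_\phi(\Omega)$ with $\mathrm{Dom}(A)\subset W^{1,2}_{0,\phi}(\Omega)$ and $Q(h,\rho)=\langle h,A\rho\rangle_{L^2_\phi}$ for all $h\in W^{1,2}_{0,\phi}(\Omega)$. The domain consists of those $\rho\in W^{1,2}_{0,\phi}(\Omega)$ for which $h\mapsto Q(h,\rho)$ is $L^2_\phi$-bounded. Testing against $h\in C^\infty_0(\Omega)$ shows this is equivalent to the distributional weighted Laplacian satisfying $\Delta_\phi\rho=e^{\phi}\mathrm{div}(e^{-\phi}\nabla\rho)\in L^2_\phi$, with $A\rho=-\Delta_\phi\rho$. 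So $\mathrm{Dom}(A)=W^{2,2}_{0,\phi}(\Omega)$ and $-A=\Delta_\phi|_{W^{2,2}_{0,\phi}(\Omega)}$, which is self-adjoint and nonpositive. For uniqueness among self-adjoint extensions with domain in $W^{1,2}_{0,\phi}$, take any such extension $B$. For $\rho\in\mathrm{Dom}(B)$ and $h\in C^\infty_0(\Omega)$ we have $\langle h,B\rho\rangle=\langle Bh,\rho\rangle=Q(h,\rho)$. By density this identity extends to $h\in W^{1,2}_{0,\phi}$, so $\rho\in\mathrm{Dom}(A)$ and $B\subset A$. Since both operators are self-adjoint, $B=A$.

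Third, for essential self-adjointness when $\Omega$ is geodesically complete, it suffices to show that $\ker(\Delta^{*}_{\phi}-1)=0$, where the adjoint is taken from $C^\infty_0$. Suppose $u\in L^2_\phi$ satisfies $\Delta_\phi u=u$ weakly. Since $\Delta_\phi$ is elliptic with smooth coefficients, $u$ is smooth. I would then choose cutoffs $\eta_k$ with $\eta_k=1$ on $B(o,k)$, support in $B(o,2k)$, and $|\nabla\eta_k|\le 1/k$; these exist by completeness. Integrating $\eta_k^2u(\Delta_\phi u-u)$ against $d\mu$ and integrating by parts gives
\[
\int\eta_k^2(|\nabla u|^2+u^2)\,d\mu\le 2\int\eta_k|u||\nabla u||\nabla\eta_k|\,d\mu .
\]
By Cauchy--Schwarz, the right side is absorbed, leaving $\int\eta_k^2u^2\,d\mu\le C k^{-2}\|u\|^2_{L^2_\phi}$. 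Letting $k\to\infty$ gives $u=0$. This is Gaffney's/Strichartz's argument with $dv$ replaced by $d\mu$. The weight causes no trouble because $\Delta_\phi$ is symmetric for $d\mu$ by (\ref{GF}).

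The main obstacle is the identification $\mathrm{Dom}(A)=W^{2,2}_{0,\phi}(\Omega)$. This amounts to checking carefully that the weak-form condition matches the distributional definition of $\Delta_\phi\rho$ in the weighted setting. I would handle it by noting that $e^{-\phi}$ is smooth and positive, so it is locally bounded above and below. Consequently, weighted and unweighted distributions and local Sobolev spaces coincide locally, and the unweighted arguments transfer verbatim.
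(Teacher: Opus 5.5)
Your proposal is correct. The paper gives no argument of its own here: it only defers to Grigor'yan's survey (adapting Chavel, Davies and Strichartz). Your route is the standard argument behind that citation, transplanted to $d\mu=e^{-\phi}dv$: the Friedrichs extension of the closed form $\int_\Omega\langle\nabla h_1,\nabla h_2\rangle\,d\mu$ with form domain $W^{1,2}_{0,\phi}(\Omega)$, identification of its operator domain with $W^{2,2}_{0,\phi}(\Omega)$ via the distributional $\Delta_\phi$, and the Gaffney--Strichartz cutoff argument for the complete case.
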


\noindent Once we have Theorem \ref{theo-3-1} for the extension of
the operator $\Delta_{\phi}|_{C^{\infty}_{0}(\Omega)}$. The spectrum
of the eigenvalue problem (\ref{eigenp-w1}) is actually the spectral
structure of the self-adjoint operator
$-\Delta_{\phi}|_{W^{2,2}_{0,\phi}(\Omega)}$ in
$L^{2}_{\phi}(\Omega)$, which is only discrete and can be deduced
 from the compactness of the resolvent
$\left(-\Delta_{\phi}+I\right)^{-1}$, together with the spectral
theory of compact self-adjoint differential operators. See e.g.
\cite[Subsection 2.2]{GA} for details. Combining with the
variational principle, one can get two basic facts for the
eigenvalue problem (\ref{eigenp-w1}) listed in Section \ref{Sect1}.

As explained in \cite[Section 3]{GA}, once the self-adjoint
extension of $\Delta_{\phi}|_{C^{\infty}_{0}(\Omega)}$ has been set
up as in Theorem \ref{theo-3-1}, the heat semigroup
$e^{t\Delta_{\phi}}$ related to the Witten-Laplacian $\Delta_{\phi}$
can be well-defined. Then one can get the following properties for
the weighted heat kernel by using the tool of heat semigroup.

\begin{lemma} \label{lemma3-1} {\rm{(see e.g. \cite[Theorem 3.3 in Section 3]{GA})}}
Let $M$ be a complete Riemannian manifold and $\phi$ be a
real-valued smooth function on $M$. Denote by $\mathbb{R}^{+}$ the
set of all positive real numbers. Then there exists a weighted heat
kernel $H^{\phi}(x,y,t)\in C^{\infty}(M\times
M\times\mathbb{R}^{+})$ such
that \\
  (1) $\lim_{t\rightarrow0}H^{\phi}(x,y,t)=\delta^{\phi}_{x}(y)$,\\
  (2) $(\Delta_{\phi}-\frac{d}{dt})H^{\phi}=0$,\\
  (3) $H^{\phi}(x,y,t)=H^{\phi}(y,x,t)$,\\
  (4)
  $H^{\phi}(x,y,t)=\int_{M}H^{\phi}(x,z,t-s)H^{\phi}(z,y,s)d\mu(z)$,
  with $0<s<t$.
\end{lemma}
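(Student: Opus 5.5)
The plan follows the classical semigroup construction of the heat kernel, as in Grigor'yan's survey, carried over to the weighted measure $d\mu=e^{-\phi}dv$. First, Theorem \ref{theo-3-1} gives a self-adjoint, nonpositive extension of $\Delta_{\phi}$ in $L^{2}_{\phi}(M)$; this is the Friedrichs extension, and it is the unique one because $M$ is complete. The spectral theorem then defines the bounded self-adjoint heat semigroup $P_t=e^{t\Delta_{\phi}}$ on $L^2_\phi(M)$. It satisfies $\|P_t\|\le 1$ and $P_{t+s}=P_tP_s$. Moreover, $t\mapsto P_tf$ is smooth in $t>0$ with $\partial_tP_tf=\Delta_\phi P_tf$, and $P_tf\to f$ in $L^2_\phi$ as $t\to0$.

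Next I show that $P_t$ has a smooth kernel. Since $\Delta_\phi=\Delta-\langle\nabla\phi,\nabla\cdot\rangle$ is a smooth uniformly elliptic operator on every relatively compact chart, local elliptic and parabolic regularity (Sobolev embedding applied to $\Delta_\phi^kP_tf$, each of which lies in $L^2_\phi$ with norm at most $C_k t^{-k}\|f\|$) gives the following: for each compact $K$ and each $t>0$, the map $f\mapsto P_tf|_K$ is bounded from $L^2_\phi$ into $C^m(K)$ for every $m$. Hence evaluation $f\mapsto P_tf(x)$ is a bounded functional. By Riesz, it is represented by some $p_{t,x}\in L^2_\phi$, and we set $H^\phi(x,y,t):=\bigl(P_{t/2}p_{t/2,x}\bigr)(y)$, or equivalently $H^\phi(x,y,t)=\langle p_{t/2,x},p_{t/2,y}\rangle_{L^2_\phi}$. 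Symmetry (3) follows directly from this inner-product formula and the self-adjointness of $P_t$. The semigroup law $P_t=P_{t-s}P_s$, written out in kernels, gives (4). Joint smoothness in $(x,y,t)$ comes from applying the local regularity in $x$ and $y$ separately, using symmetry, and trading $t$-derivatives for powers of $\Delta_\phi$.

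Then I prove (2) and (1). Since $P_tf(x)=\int_M H^\phi(x,y,t)f(y)\,d\mu(y)$ solves $\partial_t u=\Delta_\phi u$ for every $f$, differentiating under the integral, which the smoothness just established permits, yields $(\Delta_{\phi,x}-\partial_t)H^\phi=0$. For (1), take $f\in C_0^\infty(M)$. Then $f$ lies in the domain of every power of $\Delta_\phi$, so $P_tf-f=\int_0^tP_s\Delta_\phi f\,ds$ is small in every local Sobolev norm. By local regularity again, $P_tf\to f$ uniformly on compacts, and in particular $\int_M H^\phi(x,y,t)f(y)\,d\mu(y)\to f(x)$, which is precisely the weighted delta condition. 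Positivity, which the paper implicitly requires, follows from the Markov property: the Beurling--Deny criterion applies because the Dirichlet form $\int|\nabla u|^2d\mu$ is decreased by the truncation $u\mapsto\min(u^+,1)$, so $P_t$ is positivity preserving. Strict positivity $H^\phi>0$ then follows from the strong parabolic maximum principle.

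The main obstacle is the regularity step: uniform local bounds turning $L^2_\phi$ estimates into pointwise $C^m$ estimates, together with joint smoothness. My first approach is to note that on compact sets $e^{-\phi}$ is bounded above and below, so $L^2_\phi$ and $L^2$ norms are equivalent there. The unweighted local elliptic estimates for the operator $\Delta_\phi$, which has smooth drift, then apply verbatim, and the argument reduces to the standard one in \cite{GA}.
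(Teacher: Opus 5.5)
Your proposal is correct and follows the route the paper indicates. The paper gives no argument of its own: it only invokes the heat semigroup $e^{t\Delta_{\phi}}$ built from the self-adjoint extension in Theorem~\ref{theo-3-1} and cites Grigor'yan's semigroup construction in \cite[Section 3]{GA}, which is essentially your spectral-theorem, local-regularity and Riesz-representation argument. The one step worth writing out is joint smoothness. The uniform bounds $L^{2}_{\phi}\to C^{m}(K)$ make $x\mapsto p_{t/2,x}$ a smooth $L^{2}_{\phi}$-valued map, so $H^{\phi}(x,y,t)=\langle p_{t/2,x},p_{t/2,y}\rangle_{L^{2}_{\phi}}$ is jointly smooth in $(x,y)$; alternatively, apply elliptic regularity to $\Delta_{\phi,x}+\Delta_{\phi,y}$ on $M\times M$.
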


\begin{remark}
\rm{ It is easy to see that (1)-(2) in Lemma \ref{lemma3-1} have
close relation with the existence of the weight heat kernel. In
\cite[Theorem 3.3 in Section 3]{GA}, the property (3) is the
\emph{symmetry} of the weight heat kernel, while the property (4) is
the \emph{semigroup identity} of the weight heat kernel. In
\cite[Section 3]{GA}, A. Grigor'yan mentioned at least two
approaches to prove the existence of the weight heat kernel --- the
classical one is the usage of a parametrix of the heat equation, and
the other one is the usage of eigenfunction expansion formula in an
exhaustion of relatively compact open sets
$\{\Omega_i\}_{i=1}^{\infty}$ of $M$. For details, one can check
e.g. \cite{BGV, IC, CY, DEB, SRS} as references. }
\end{remark}

The following strong maximum (resp., minimum) principle is needed.

\begin{theorem} \label{theo-3-5}
Given a Riemannian manifold $M$ and the Witten-Laplacian
$\Delta_{\phi}$, and the associated weighted heat operator
$\mathcal{L}^{\phi}=\Delta_{\phi}-\frac{\partial}{\partial t}$,
where $\phi$ is a smooth real-valued function on $M$. Let $u(x,t)$
be a bounded continuous function on $M\times[0,T]$, which is $C^2$
with respect to the variable $x\in M$, and $C^1$ with respect to
$t\in[0,T]$, and which satisfies
 \begin{eqnarray*}
 \mathcal{L}^{\phi}u\geq0 \qquad (\mathcal{L}^{\phi}u\leq0)
 \end{eqnarray*}
on $M\times(0,T)$. If there exists $(x_0,t_0)$ in $M\times(0,T]$
such that
\begin{eqnarray*}
u(x_0,t_0)=\sup\limits_{M\times[0,T]}u(x,t) \qquad
\left(resp.,~u(x_0,t_0)=\inf\limits_{M\times[0,T]}u(x,t)\right),
\end{eqnarray*}
then $ u|_{M\times[0,t_0]}=u(x_0,t_0) $.
\end{theorem}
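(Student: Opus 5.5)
The plan is to prove the maximum-principle version of the statement (Theorem~\ref{theo-3-5}) for a supremum; the infimum case then follows by applying it to $-u$, since $\mathcal{L}^{\phi}(-u)=-\mathcal{L}^{\phi}u$. The key observation is that $\Delta_{\phi}=\Delta-\langle\nabla\cdot,\nabla\phi\rangle$ is a second-order linear elliptic operator with smooth first-order drift and no zeroth-order term. Hence $\mathcal{L}^{\phi}$ is a uniformly parabolic operator on every relatively compact coordinate chart, and the classical Nirenberg strong maximum principle for parabolic operators of the form $a^{ij}\partial_{ij}+b^{i}\partial_{i}-\partial_t$ applies locally. I will therefore prove a local statement and then globalize it with a connectedness argument. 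I assume $M$ is connected, as is implicit in the statement.

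\textbf{Step 1 (local statement).} Let $m:=\sup_{M\times[0,T]}u=u(x_0,t_0)$ with $t_0>0$. Cover $M$ by coordinate balls $U$ with compact closure. In local coordinates,
\begin{eqnarray*}
\mathcal{L}^{\phi}=g^{ij}\partial_{ij}+\left(-g^{ij}\Gamma^{k}_{ij}\delta_{k}^{\,l}-g^{lj}\partial_{j}\phi\right)\partial_{l}-\partial_t,
\end{eqnarray*}
which has bounded coefficients and uniformly elliptic principal part on $U$. Nirenberg's strong maximum principle then gives the following: if $u\le m$ on $U\times[0,T]$, $\mathcal{L}^{\phi}u\ge0$, and $u(x_1,t_1)=m$ for some $x_1\in U$, $t_1\in(0,T]$, then $u\equiv m$ on $U\times[0,t_1]$. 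If I want to be self-contained rather than cite it, I would reprove it by the standard route. First, a parabolic Hopf-type barrier $w=e^{-\alpha(|x-\bar x|^2+\beta(t-\bar t)^2)}-e^{-\alpha\rho^2}$ with $\alpha$ large gives $\mathcal{L}^{\phi}w>0$ on a small annular region. This barrier rules out an interior maximum touching a paraboloid from the side. Second, one shows that the set where $u=m$ propagates both horizontally (in $x$) and backward in time. The drift term is absorbed because it is bounded by $C\alpha|x-\bar x|w$, which is dominated by $4\alpha^2|x-\bar x|^2w$ for large $\alpha$.

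\textbf{Step 2 (globalization).} Set
\begin{eqnarray*}
S:=\{x\in M\,|\,u(x,t)=m~\mathrm{for~all}~t\in[0,t_0]\}.
\end{eqnarray*}
By Step 1 applied in a chart around $x_0$, $S$ is nonempty. $S$ is closed because $u$ is continuous. $S$ is open: for $x_1\in S$, pick a chart $U\ni x_1$; since $u(x_1,t_0)=m$ is the global maximum, Step 1 yields $u\equiv m$ on $U\times[0,t_0]$. Since $M$ is connected, $S=M$, that is, $u|_{M\times[0,t_0]}\equiv m$. Boundedness of $u$ ensures that $m$ is finite, and the hypothesis that the supremum is attained is what makes the local argument start. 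No compactness of $M$ is needed because everything is local.

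The main obstacle is the backward-in-time propagation in Step 1, namely showing $u\equiv m$ for all earlier times and not merely at time $t_0$. Nirenberg handles this with a second barrier on a parabolic ``cylinder below'' the point: one must show that if $u<m$ at some $(x_1,t_1)$ with $t_1<t_0$, then $u<m$ along the vertical segment up to $t_0$. I would take this from the classical theorem (e.g.\ Protter--Weinberger, or Lieberman's parabolic PDE book), noting that its hypotheses cover first-order terms with bounded coefficients. It is also worth remarking that the case $t_0=T$ is allowed because the argument only uses times $\le t_0$.
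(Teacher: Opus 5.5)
Your proposal is correct and follows the same route as the paper. The paper observes that $\mathcal{L}^{\phi}$ is a uniformly parabolic operator with a smooth first-order drift and no zeroth-order term, and then cites the classical strong maximum principle (Chavel, p.~180; Evans, Ch.~7). Your chart-by-chart use of Nirenberg's principle, the open--closed connectedness argument, and your remark that $M$ must be connected make explicit the localization that the paper's one-line citation leaves implicit, since global uniform parabolicity is not literally available on a noncompact $M$.
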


\begin{proof}
It is easy to check that $\mathcal{L}^{\phi}$ is uniformly
parabolic, and then the assertion in Theorem \ref{theo-3-1} directly
follows from the standard theory of second-order parabolic partial
differential equations (see e.g. \cite[p. 180]{IC} or \cite[Chapter
7]{E1}).
\end{proof}

We also need:

\begin{lemma} \label{lemma3-3}
For the model space $M^{+}$ (resp., $M^{-}$), if furthermore the
potential function $\phi$ has \textbf{Property 1}, its weighted heat
kernel $H^{\phi}_{+}(q^{+},z,t)=H^{\phi}_{+}(r_{1},t)$ (resp.,
$H^{\phi}_{-}(q^{-},z,t)=H^{\phi}_{-}(r_{2},t)$) depends only on
variables $r_{1}=d_{M^{+}}(q^{+},z)$ (resp.,
$r_{2}=d_{M^{-}}(q^{-},z)$) and $t$. Moreover, for all $t>0$, we
have
 \begin{eqnarray*}
 \frac{\partial}{\partial r_1}H^{\phi}_{+}(r_{1},t)<0 \qquad
 \left(\frac{\partial}{\partial r_2}H^{\phi}_{-}(r_{2},t)<0\right).
 \end{eqnarray*}
\end{lemma}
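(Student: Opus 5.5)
Since the model spaces are warped products $[0,l)\times_{f}\mathbb{S}^{n-1}$ (restricted to a geodesic ball $\mathscr{B}_n(q^{\pm},r_0)$ with Dirichlet or Neumann condition, as in Theorem \ref{maintheorem}), and $\phi=\phi(s)$ is radial by \textbf{Property 1}, I would treat the two assertions separately. The first is radial symmetry of $H^{\phi}_{\pm}(q^{\pm},z,t)$; the second is strict radial monotonicity. I describe the argument for $M^{+}$; $M^{-}$ is identical, because only the warped-product structure and the radiality of $\phi$ are used.

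\emph{Radial symmetry.} Every isometry $\Phi\in O(n)$ acting on the $\mathbb{S}^{n-1}$ factor fixes $q^{+}$ and preserves $s$. Hence it preserves $\phi$, the measure $d\mu=e^{-\phi}dv$, the ball and its boundary condition, and therefore commutes with $\Delta_{\phi}$.

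It follows that $(x,y,t)\mapsto H^{\phi}_{+}(\Phi x,\Phi y,t)$ is again a weighted heat kernel with the same boundary condition. By uniqueness of the weighted heat kernel (via the regularity and uniqueness results recalled from \cite{ZLM}, or directly from the eigenfunction expansion built from the self-adjoint extension in Theorem \ref{theo-3-1}), we get $H^{\phi}_{+}(q^{+},\Phi z,t)=H^{\phi}_{+}(q^{+},z,t)$. Since $O(n)$ acts transitively on each geodesic sphere about $q^{+}$, the kernel depends only on $r_1=d_{M^{+}}(q^{+},z)$ and $t$.

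In polar coordinates $dv=f^{n-1}ds\,d\sigma$, so $u(r,t):=H^{\phi}_{+}(r,t)$ then satisfies
\begin{eqnarray*}
u_t=u_{rr}+\left((n-1)\frac{f'}{f}-\phi'\right)u_r .
\end{eqnarray*}

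\emph{Monotonicity.} Set $w=u_r$ and differentiate the equation in $r$. This gives
\begin{eqnarray*}
w_t=w_{rr}+\left((n-1)\frac{f'}{f}-\phi'\right)w_r+\left((n-1)\left(\frac{f'}{f}\right)'-\phi''\right)w ,
\end{eqnarray*}
a linear parabolic equation for $w$ on $(0,r_0)\times(0,\infty)$. Its boundary data are as follows.
\begin{itemize}
\item At $r=0$: $w=0$, by smoothness of $u$ at the pole, which is guaranteed by \textbf{Property 1} and $f'(0)=1$.
\item At $r=r_0$: under the Neumann condition, $w=0$. Under the Dirichlet condition, $u=0$ on the boundary while $u>0$ inside, so the Hopf lemma gives $w<0$ there.
\item As $t\to0$: $u$ concentrates at $q^{+}$ like a Gaussian, so $w\le0$ for small $t$ away from the pole.
\end{itemize}
The zeroth-order coefficient may change sign. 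To handle this I would replace $w$ by $\tilde w=e^{-Ct}w$ with $C$ large on compact time intervals, and then apply the weak maximum principle (the one-dimensional version of Theorem \ref{theo-3-5}) to conclude $w\le0$. The strong maximum principle then gives $w<0$ for $t>0$ and $0<r<r_0$, since otherwise $w\equiv0$ on an earlier time slab and $u$ would be spatially constant, contradicting the initial delta condition.

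An alternative route to $w\le 0$ is the eigenfunction expansion $u=\sum e^{-\lambda_i t}\varphi_i(q^{+})\varphi_i(r)$, combined with approximating the delta initial datum by radially decreasing smooth data and showing that the heat flow preserves radial monotonicity.

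\emph{Main obstacle.} The hardest step will be making the monotonicity argument rigorous near the two singular places: the pole $r=0$, where the coefficient $f'/f\sim1/r$ is singular, and $t=0$, where the initial datum is a delta. To get around both, I would first prove $w\le0$ for the flow started from smooth, radial, nonincreasing approximations $g_\varepsilon$ of $\delta^{\phi}_{q^{+}}$. Near the pole I would use the fact that $w/r$ stays bounded there to run the maximum principle on $[\delta,r_0]$ and then let $\delta\to0$. Finally I would pass to the limit $\varepsilon\to0$ using the semigroup identity of Lemma \ref{lemma3-1}(4), and upgrade $\le$ to the strict inequality by the strong maximum principle.
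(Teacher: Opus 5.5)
Your proposal is correct and follows essentially the paper's route, which is Cheeger--Yau's argument: reduce to a radial function, derive a linear parabolic equation for $\partial_r H^{\phi}_{\pm}$ whose first-order coefficient is $(n-1)f'/f-\phi'$, and apply the maximum principle. The differences are only in the supporting steps. You obtain radiality from the $O(n)$-symmetry together with uniqueness of the kernel, rather than reading it off the radial form of $\Delta_{\phi}$. You also control the $t\to0$ behaviour through smooth, radially nonincreasing approximations of the delta, instead of the weighted short-time asymptotic expansion (Grigor'yan, Theorem 3.9) that the paper quotes; this makes your argument somewhat more self-contained.
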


\begin{proof}
If the potential function $\phi$ has \textbf{Property 1}, then the
weighted heat kernel $H_{+}^{\phi}$ (resp, $H^{\phi}_{-}$) on a
geodesic ball $\mathscr{B}_{n}(q^{+},\cdot)$ (resp,
$\mathscr{B}_{n}(q^{-},\cdot)$) of the model space $M^{+}$ (resp,
$M^{-}$) depends on the space variable $r_1$ (resp., $r_2$) can be
observed by the decomposition of the Witten-Laplacian on
$\mathscr{B}_{n}(q^{+},\cdot)$ (resp,
$\mathscr{B}_{n}(q^{-},\cdot)$) in geodesic spherical coordinates at
$q^{+}$ (resp, $q^{-}$) --- see the $3^{\rm{rd}}$ line in the proof
of Lemma \ref{lemma6-1} for the detailed expression. The
monotonicity of $H_{+}^{\phi}$ (resp, $H^{\phi}_{-}$) with respect
to the time variable can be obtained by using an almost the same
argument to that in the proof of \cite[Lemma 2.3]{CY}. But there are
two differences we wish to point out as follows:
\begin{itemize}
\item In the non-weighted case, \cite{CY} has used the fact that for
sufficiently small $0<r_{1}, t<\epsilon$, the heat kernel
$H_{+}(r_{1},t)$ is asymptotical to be
 \begin{eqnarray*}
 H_{+}(r_{1},t) \sim \frac{1}{(4\pi t)^{n/2}}\times
\exp\left(\frac{-r_{1}^2}{4t}\right)\sum\limits_{i=0}^{\infty}a_{i}(r_1)t^{i},
 \end{eqnarray*}
where $a_{i}(r_1)$ are smooth functions on $M^{+}$. This directly
implies the expression (2.11) in \cite{CY}. In the weighted case,
\cite[Theorem 3.9]{GA} tells us that for sufficiently small
$0<r_{1}, t<\epsilon$, there exists a smooth positive function
$u(r_1)$ such that the weighted heat kernel $H_{+}^{\phi}(r_{1},t)$
satisfies
 \begin{eqnarray*}
 H_{+}^{\phi}(r_{1},t) \sim \frac{1}{(4\pi t)^{n/2}}\times
\exp\left(\frac{-r_{1}^2}{4t}\right)u(r_1),
 \end{eqnarray*}
 which leads to the truth that the analysis process in the proof of \cite[Lemma
 2.3]{CY} before the equation (2.20) can be
transplanted to the non-weighted case without any big obstacle. Same
story happens to $H^{\phi}_{-}(r_{2},t)$. By the way, a detailed
proof of the above asymptotic expansion for the weighted heat kernel
can be found in \cite{BGM, Gpb}.

\item For the second-order parabolic equation (2.20) in \cite{CY} for the quantity $\partial H_{+}/\partial
r_1$, the coefficient $-m(r_1)$ of its first-order derivative term
is given by
\begin{eqnarray*}
-(n-1)\frac{f'(r_1)}{f(r_1)},
\end{eqnarray*}
with $f$ appearing in the metric of the model considered therein.
Actually, if one checks the details in \cite{CY} carefully, he or
she would find that a spherically symmetric manifold must be an open
or closed Ricci model with respect to its base point (i.e. our model
space used here is more accurate than the ones used in \cite{CY}),
and then except regularity assumptions, the function $f$ in $m(r_1)$
of the equation (2.20) in \cite{CY} should have the same form as our
warping function $f$ here determined by the initial value problem
(\ref{ODE}). Now, as the equation (2.20) in \cite{CY}, if one
attempts to derive an equation for the quantity $\partial
H^{\phi}_{+}/\partial r_1$, the coefficient of its first-order
derivative term would become
\begin{eqnarray*}
-\left[(n-1)\frac{f'(r_1)}{f(r_1)}-\phi'(r_1)\right],
\end{eqnarray*}
which has no effect on the uniform parabolicity of that equation.
This implies that for the weighted case, similar to what Cheeger-Yau
have done for the equation (2.20) in \cite{CY}, we can also use the
maximum principle for the second-order uniformly parabolic partial
differential equations to deal with the equation of $\partial
H^{\phi}_{+}(r_{1},t)/\partial r_1$, and then the result
\begin{eqnarray*}
 \frac{\partial}{\partial r_1}H^{\phi}_{+}(r_{1},t)<0, \quad \forall
 t>0
 \end{eqnarray*}
 follows. Same
story happens to $H^{\phi}_{-}(r_{2},t)$.
\end{itemize}
Our proof is finished.
\end{proof}

Now, we have:

\begin{proof} [Proof of Theorem \ref{maintheorem}]
We wish to use a similar argument to that in the proof of our
previous work \cite[Theorem 6.6]{JM4}.

Assume that $M^n$ has a radial Ricci curvature lower bound
$(n-1)\kappa(s)$ w.r.t. some point $q\in M^n$,
$r_{0}<\min\{\ell(q),l\}$, and the potential function $\phi$
satisfies \textbf{Property 1}. Since the geodesic ball $B(q,r_0)$
maybe has points on the cut-locus of $q$, which leads to the
invalidity of the path of linear transformations $\mathbb{A}$
defined as Section \ref{Sect2}, we need to use a limit procedure
shown in \cite{CY} to avoid this problem. By applying Lemma
\ref{lemma3-1}, one can get
 \begin{eqnarray} \label{PT16-1}
 &&H^{\phi}(q,y,t)-
H^{\phi}_{-}\left(d_{M^{-}}\left(q^{-},z\right),t\right)
=H^{\phi}(q,y,t)-
H^{\phi}_{-}\left(d_{M^{n}}\left(q,y\right),t\right)\nonumber\\
  &=& H^{\phi}(q,y,t)-
H^{\phi}_{-}\left(r_{2}\left(q,y\right),t\right)\nonumber\\
&=&
-\int\limits_{0}^{t}\int\limits_{B(q,r_0)}\frac{\partial}{\partial
s}\left[H^{\phi}_{-}\left(r_{2}\left(q,x\right),t-s\right)\right]H^{\phi}(x,y,s)d\mu(x)ds \nonumber\\
&&  +
\int\limits_{0}^{t}\int\limits_{B(q,r_0)}H^{\phi}_{-}\left(r_{2}\left(q,x\right),t-s\right)\frac{\partial
H^{\phi}}{\partial s}(x,y,s)d\mu(x)ds \nonumber\\
&=&
-\int\limits_{0}^{t}\int\limits_{B(q,r_0)}\frac{\partial}{\partial
s}\left[H^{\phi}_{-}\left(r_{2}\left(q,x\right),t-s\right)\right]H^{\phi}(x,y,s)e^{-\phi}dv(x)ds \nonumber\\
&&  +
\int\limits_{0}^{t}\int\limits_{B(q,r_0)}H^{\phi}_{-}\left(r_{2}\left(q,x\right),t-s\right)\frac{\partial
H^{\phi}}{\partial s}(x,y,s)e^{-\phi}dv(x)ds.
 \end{eqnarray}
For any $\xi\in S^{n-1}_{q}\subseteq T_{q}M^n$, let $a(\xi)=\min
\{d_{\xi}, r_0\}$ be defined as in Section \ref{Sect2}. It is not
hard to see from the definition of $d_{\xi}$ in (\ref{add-extra1})
that $a(\xi)$ is a continuous function on $S^{n-1}_{q}$. As in
\cite{CG}, we can choose a sequence of smooth functions
$a_{\epsilon}$ on  $S^{n-1}_{q}$, with $a_{\epsilon}(\xi)<a(\xi)$
for any $\xi\in S^{n-1}_{q}$, such that $a_{\epsilon}(\cdot)$
converges uniformly to $a(\cdot)$ as $\epsilon\rightarrow0$ and
moreover the set
\begin{eqnarray*}
V_{\epsilon}=\{\exp_{q}(t\xi)|t\leq a_{\epsilon}(\xi)\}
\end{eqnarray*}
is compact. Clearly, $V_{\epsilon}$ is within the cut locus of $q$.
Then it follows that (\ref{PT16-1}) becomes
\begin{eqnarray} \label{PT16-2}
&&H^{\phi}(q,y,t)-
H^{\phi}_{-}\left(d_{M^{-}}\left(q^{-},z\right),t\right)  \nonumber\\
&=& H^{\phi}(q,y,t)-
H^{\phi}_{-}\left(r_{2}\left(q,y\right),t\right)\nonumber\\
&=&\lim\limits_{\epsilon\rightarrow0}\Bigg{\{}
-\int\limits_{0}^{t}\int\limits_{V_{\epsilon}}\frac{\partial}{\partial
s}\left[H^{\phi}_{-}\left(r_{2}\left(q,x\right),t-s\right)\right]H^{\phi}(x,y,s)d\mu(x)ds \nonumber\\
&&  +
\int\limits_{0}^{t}\int\limits_{V_{\epsilon}}H^{\phi}_{-}\left(r_{2}\left(q,x\right),t-s\right)\frac{\partial
H^{\phi}}{\partial s}(x,y,s)d\mu(x)ds\Bigg{\}} \nonumber\\
&=&\lim\limits_{\epsilon\rightarrow0}\Bigg{\{}
-\int\limits_{0}^{t}\int\limits_{V_{\epsilon}}(\Delta_{\phi})_{M^{-}}\left[H^{\phi}_{-}\left(r_{2}\left(q,x\right),t-s\right)\right]H^{\phi}(x,y,s)d\mu(x)ds \nonumber\\
&&  +
\int\limits_{0}^{t}\int\limits_{V_{\epsilon}}H^{\phi}_{-}\left(r_{2}\left(q,x\right),t-s\right)(\Delta_{\phi})_{M^{n}}H^{\phi}(x,y,s)d\mu(x)ds\Bigg{\}},
\end{eqnarray}
where $(\Delta_{\phi})_{M^{-}}$, $(\Delta_{\phi})_{M^{n}}$ are the
Witten-Laplace operators on $M^{-}$ and $M^n$, respectively.
Applying Green's formula (\ref{GF}), together with either Dirichlet
or Neumann boundary condition, it can be deduced from (\ref{PT16-2})
that
\begin{eqnarray} \label{PT16-3}
&&H^{\phi}(q,y,t)-
H^{\phi}_{-}\left(d_{M^{-}}\left(q^{-},z\right),t\right)  \nonumber\\
&=&\lim\limits_{\epsilon\rightarrow0}\Bigg{\{}\int\limits_{0}^{t}\int\limits_{V_{\epsilon}}\left[(\Delta_{\phi})_{M^{n}}H^{\phi}_{-}\left(r_{2}\left(q,x\right),t-s\right)-
(\Delta_{\phi})_{M^{-}}H^{\phi}_{-}\left(r_{2}\left(q,x\right),t-s\right)\right]\times
\nonumber\\
&&\qquad \qquad \qquad\qquad H^{\phi}(x,y,s)d\mu(x)ds\Bigg{\}}.
\end{eqnarray}
On the other hand, in the geodesic spherical coordinates near $q$ or
$q^{-}$, for function of
$r_{2}(q,y)=d_{M^{-}}(q^{-},z)=d_{M^n}(q,y)$, one has
\begin{eqnarray*}
&&(\Delta_{\phi})_{M^{n}}=\frac{\partial^2}{\partial
r_{2}^{2}}+\left[\frac{\left(\sqrt{|g|}\right)'}{\sqrt{|g|}}-\phi'(r_2)\right]\frac{\partial}{\partial
r_2}=\frac{\partial^2}{\partial
r_{2}^{2}}+\left[\frac{\left(J^{n-1}(r_{2},\xi)\right)'}{J^{n-1}(r_{2},\xi)}-\phi'(r_2)\right]\frac{\partial}{\partial
r_2},\\
&&(\Delta_{\phi})_{M^{-}}=\frac{\partial^2}{\partial
r_{2}^{2}}+\left[\frac{\left(f^{n-1}(r_2)\right)'}{f^{n-1}(r_2)}-\phi'(r_2)\right]\frac{\partial}{\partial
r_2}.
\end{eqnarray*}
Substituting the above two equalities into (\ref{PT16-3}) and
applying Lemma \ref{lemma3-3}, we can obtain
\begin{eqnarray} \label{PT16-4}
&&\qquad H^{\phi}(q,y,t)-
H^{\phi}_{-}\left(d_{M^{-}}\left(q^{-},z\right),t\right)\nonumber\\
&&=\lim\limits_{\epsilon\rightarrow0}\left\{
\int\limits_{0}^{t}\int\limits_{V_{\epsilon}}(n-1)\left[\frac{J'(r_2,\xi)}{J(r_2,\xi)}-\frac{f'(r_2)}{f(r_2)}\right]
\frac{\partial
H^{\phi}_{-}\left(r_{2}\left(q,x\right),t-s\right)}{\partial
r_2}H^{\phi}(x,y,s)d\mu(x)ds\right\} \nonumber\\
 &&\geq0,
\end{eqnarray}
where the fact
$d\theta(s,\xi)/ds=(n-1)\left[J(s,\xi)/f(s)\right]^{n-2}\left[J(s,\xi)/f(s)\right]'\leq0$
shown in Theorem \ref{BishopI} has also been used. This directly
implies $H^{\phi}(q,y,t)\geq
H^{\phi}_{-}\left(d_{M^{-}}\left(q^{-},z\right),t\right)$, which is
exactly the weighted heat kernel comparison (\ref{6-1-1-ex}). When
the equality in (\ref{6-1-1-ex}) holds at some $(y_{0},t_{0})\in
B(q,r_0)\times(0,\infty)$, by Theorem \ref{theo-3-5} one has
$H^{\phi}(q,y,t)=H^{\phi}_{-}\left(d_{M^{-}}\left(q^{-},z\right),t\right)=H^{\phi}(q,y_0,t_0)$
on $B(q,r_0)\times[0,t_0]$. Together with (\ref{PT16-4}), we know
that
 \begin{eqnarray*}
\frac{J'(r_2,\xi)}{J(r_2,\xi)}-\frac{f'(r_2)}{f(r_2)}=0
 \end{eqnarray*}
 holds on $B(q,r_0)$, which implies $J(r_2,\xi)=f(r_2)$  holds on
 $B(q,r_0)$, and moreover
 $$\mathrm{vol}(B(q,r_0))=\mathrm{vol}(\mathscr{B}_n(q^{-},r_0)).$$
 Then it follows that $B(q,r_0)$ is isometric to
 $\mathscr{B}_n(q^{-},r_0)$ by directly applying Theorem
 \ref{BishopI}. This completes the first assertion (1) of Theorem
 \ref{maintheorem}.

Assume that $M^n$ has a radial sectional curvature upper bound
$\kappa(s)$ w.r.t. $q\in M^n$, $r_{0}<\min\{\mathrm{inj}(q),l\}$,
and the potential function $\phi$ satisfies \textbf{Property 1}.
Clearly, in this case, the geodesic ball $B(q,r_0)$ must be within
the cut-locus of $q$. By applying Lemma \ref{lemma3-1}, Green's
formula (\ref{GF}), Lemma \ref{lemma3-3}, and using similar
calculations to the previous case, we have
 \begin{eqnarray} \label{PT16-5}
 &&H^{\phi}(q,y,t)-
H^{\phi}_{-}\left(d_{M^{+}}\left(q^{+},z\right),t\right)  \nonumber\\
 &=&H^{\phi}(q,y,t)-
H^{\phi}_{+}\left(d_{M^{n}}\left(q,y\right),t\right)\nonumber\\
  &=& H^{\phi}(q,y,t)-
H^{\phi}_{+}\left(r_{1}\left(q,y\right),t\right)\nonumber\\
&=&
-\int\limits_{0}^{t}\int\limits_{B(q,r_0)}\frac{\partial}{\partial
s}\left[H^{\phi}_{+}\left(r_{1}\left(q,x\right),t-s\right)\right]H^{\phi}(x,y,s)d\mu(x)ds \nonumber\\
&&  +
\int\limits_{0}^{t}\int\limits_{B(q,r_0)}H^{\phi}_{+}\left(r_{1}\left(q,x\right),t-s\right)\frac{\partial
H^{\phi}}{\partial s}(x,y,s)d\mu(x)ds \nonumber\\
&=&\int\limits_{0}^{t}\int\limits_{B(q,r_0)}\left[(\Delta_{\phi})_{M^{n}}H^{\phi}_{+}\left(r_{1}\left(q,x\right),t-s\right)-
(\Delta_{\phi})_{M^{+}}H^{\phi}_{+}\left(r_{1}\left(q,x\right),t-s\right)\right]\times\nonumber\\
&&\qquad \qquad \qquad\qquad H^{\phi}(x,y,s)d\mu(x)ds \nonumber\\
&=&\int\limits_{0}^{t}\int\limits_{B(q,r_0)}\left[\frac{\left(\sqrt{|g|}\right)'}{\sqrt{|g|}}-\frac{\left(f^{n-1}(r_1)\right)'}{f^{n-1}(r_1)}\right]
\frac{\partial
H^{\phi}_{+}\left(r_{1}\left(q,x\right),t-s\right)}{\partial
r_1}H^{\phi}(x,y,s)d\mu(x)ds\nonumber\\
 &\leq&0,
 \end{eqnarray}
where the fact
 $$\left[\frac{\sqrt{|g|}}{f^{n-1}}\right]'\geq0$$
 shown in Theorem \ref{BishopII} has also been used. This
implies $H^{\phi}(q,y,t)\leq
H^{\phi}_{+}\left(d_{M^{+}}\left(q^{+},z\right),t\right)$, which is
exactly the weighted heat kernel comparison (\ref{6-1-2-ex}). When
the equality in (\ref{6-1-2-ex}) holds at some $(y_{0},t_{0})\in
B(q,r_0)\times(0,\infty)$, by Theorem \ref{theo-3-5} one has
$H^{\phi}(q,y,t)=H^{\phi}_{+}\left(d_{M^{+}}\left(q^{+},z\right),t\right)=H^{\phi}(q,y_0,t_0)$
on $B(q,r_0)\times[0,t_0]$. Together with (\ref{PT16-5}), we know
that
 \begin{eqnarray*}
\frac{\left(\sqrt{|g|}\right)'}{\sqrt{|g|}}=\frac{\left(f^{n-1}(r_1)\right)'}{f^{n-1}(r_1)}
 \end{eqnarray*}
holds on $B(q,r_0)$, which implies $\sqrt{|g|}(r_1)=f^{n-1}(r_1)$
holds on $B(q,r_0)$. By directly applying Theorem \ref{BishopII},
one knows that $\mathbb{A}(r_{1},\xi)=f(r_{1})I$ holds on
$B(q,r_0)$, which immediately implies that
$\mathrm{vol}(B(q,r_0))=\mathrm{vol}(\mathscr{B}_n(q^{+},r_0))$, and
$B(q,r_0)$ is isometric to
 $\mathscr{B}_n(q^{+},r_0)$. This completes the second assertion (2) of Theorem
 \ref{maintheorem}.
\end{proof}

\section{Applications in spectral geometry} \label{Sect4}
\renewcommand{\thesection}{\arabic{section}}
\renewcommand{\theequation}{\thesection.\arabic{equation}}
\setcounter{equation}{0}

Theorem \ref{theo-3-1} tells us that if $\Omega\subseteq M^n$ is
geodesically complete, with $M^n$ a given complete Riemannian
$n$-manifold, then $\Delta_{\phi}|_{W^{2,2}_{0,\phi}(\Omega)}$ is a
unique self-adjoint extension of
$\Delta_{\phi}|_{C^{\infty}_{0}(\Omega)}$, and this fact can be used
to prove that the eigenvalue problem (\ref{eigenp-w1}) only has
discrete spectrum, which consists of eigenvalues given in the
sequence (\ref{sequen-1}). Let $\psi_{i}$ be an eigenfunction
corresponding to the $i$-th Dirichlet eigenvalue
$\lambda_{i,\phi}(\Omega)$ such that $\{\psi_{i}\}_{i=1}^{\infty}$
forms an orthonormal basis in $L^{2}_{\phi}(\Omega)$. From
\cite[Section 3]{GA}, one knows that the weighted heat kernel
$H^{\phi}(x,y,t)$ on $\Omega$ has the following eigenfunction
expansion formula
 \begin{eqnarray} \label{4-1}
H^{\phi}(x,y,t)=\sum\limits_{i=1}^{\infty}e^{-\lambda_{i,\phi}(\Omega)t}\psi_{i}(x)\psi_{i}(y).
 \end{eqnarray}
In fact, the formula (\ref{4-1}) was derived in \cite[Section 3]{GA}
by using several properties of the heat semigroup generated by the
Witten-Laplacian $\Delta_{\phi}$. One can also check our recent work
\cite[Section 5]{ZLM} for a simpler way to get the formula
(\ref{4-1}) and also several other properties of the weighted heat
kernel.

Now, by mainly using the formula (\ref{4-1}), we have:

\begin{proof} [A proof of Cheng-type eigenvalue comparisons (\ref{ECT-1}) and
(\ref{ECT-2})] Assume that $M^n$ has a radial Ricci curvature lower
bound $(n-1)\kappa(s)$ w.r.t. some point $q\in M^n$,
$r_{0}<\min\{\ell(q),l\}$, and the potential function $\phi$
satisfies \textbf{Property 1}. Then by Theorem \ref{maintheorem}, we
have
\begin{eqnarray} \label{4-2}
H^{\phi}(q,q,t)\geq H^{\phi}_{-}(r_{2}(q,q),t)=H^{\phi}_{-}(0,t)
\end{eqnarray}
for all $t>0$.  By applying the expansion formula (\ref{4-1}), one
has
 \begin{eqnarray*}
H^{\phi}(q,q,t)=\sum\limits_{i=1}^{\infty}e^{-\lambda_{i,\phi}t}\psi_{i}^{2}(q),\\
H^{\phi}_{-}(0,t)=\sum\limits_{i=1}^{\infty}e^{-\widetilde{\lambda}_{i,\phi}t}\widetilde{\psi}_{i}^{2}(0),
 \end{eqnarray*}
where $\lambda_{i,\phi}=\lambda_{i,\phi}(B(q,r_0))$,
$\widetilde{\lambda}_{i,\phi}=\lambda_{i,\phi}\left(\mathscr{B}_{n}(q^{-},r_0)\right)$,
and $\psi_{i}$, $\widetilde{\psi}_{i}$ are eigenfunctions
corresponding to $\lambda_{i,\phi}(B(q,r_0))$,
$\lambda_{i,\phi}\left(\mathscr{B}_{n}(q^{-},r_0)\right)$
respectively. Putting the above two equalities into (\ref{4-2})
results in
\begin{eqnarray*}
&&e^{-\lambda_{1,\phi}t}\left[\psi_{1}^{2}(q)+e^{-(\lambda_{2,\phi}-\lambda_{1,\phi})t}\psi_{2}^{2}(q)+
e^{-(\lambda_{3,\phi}-\lambda_{1,\phi})t}\psi_{3}^{2}(q)+\cdots\right]\\
&& \qquad\qquad \geq
e^{-\widetilde{\lambda}_{1,\phi}t}\left[\widetilde{\psi}_{1}^{2}(0)+e^{-(\widetilde{\lambda}_{2,\phi}-\widetilde{\lambda}_{1,\phi})t}\widetilde{\psi}_{2}^{2}(0)+
e^{-(\widetilde{\lambda}_{3,\phi}-\widetilde{\lambda}_{1,\phi})t}\widetilde{\psi}_{3}^{2}(0)+\cdots\right],
\end{eqnarray*}
which implies
\begin{eqnarray} \label{4-3}
\psi_{1}^{2}(q)+e^{-(\lambda_{2,\phi}-\lambda_{1,\phi})t}\psi_{2}^{2}(q)+\cdots\geq
e^{(\lambda_{1,\phi}-\widetilde{\lambda}_{1,\phi})t}\left[\widetilde{\psi}_{1}^{2}(0)+e^{-(\widetilde{\lambda}_{2,\phi}-\widetilde{\lambda}_{1,\phi})t}\widetilde{\psi}_{2}^{2}(0)+
\cdots\right].
\end{eqnarray}
Since $\psi_{1}^{2}(q)>0$, $\widetilde{\psi}_{1}^{2}(0)$, and
$\lambda_{i,\phi}>\lambda_{1,\phi}$,
$\widetilde{\lambda}_{i,\phi}>\widetilde{\lambda}_{1,\phi}$ for any
$i\geq2$, letting $t\rightarrow\infty$ in (\ref{4-3}) leads to
\begin{eqnarray*}
\lambda_{1,\phi}-\widetilde{\lambda}_{1,\phi}\leq0,
\end{eqnarray*}
 which is equivalent to
 \begin{eqnarray*}
\lambda_{1,\phi}(B(q,r_0))\leq\lambda_{1,\phi}\left(\mathscr{B}_{n}(q^{-},r_0)\right).
 \end{eqnarray*}

 On the other hand, by applying Theorem \ref{maintheorem} and using a
 similar argument as above, one can obtain that for
$r_{0}<\min\{\mathrm{inj}(q),l\}$, the inequality
\begin{eqnarray*}
\lambda_{1,\phi}(B(q,r_0))\geq\lambda_{1,\phi}\left(\mathscr{B}_{n}(q^{+},r_0)\right).
 \end{eqnarray*}
holds when $M^n$ has a radial sectional curvature upper bound
$\kappa(s)$ w.r.t. $q\in M^n$, and the potential function $\phi$
satisfies \textbf{Property 1}. This completes the proof.
\end{proof}

\begin{remark}
\rm{ Similar to what we have pointed out in \cite[Remark 6.9]{JM4},
one cannot get the rigidity characterization for the equality in
Cheng-type eigenvalue comparisons (\ref{ECT-1}) or (\ref{ECT-2})
through the approach used in this section. In fact, if
$\lambda_{1,\phi}(B(q,r_0))=\lambda_{1,\phi}\left(\mathscr{B}_{n}(q^{-},r_0)\right)$
here, we can only get
$\lim_{t\rightarrow\infty}H^{\phi}(q,q,t)=\lim_{t\rightarrow\infty}H^{\phi}_{-}(0,t)$.
We are not sure whether there exists some $t_0$ such that
$H^{\phi}(q,q,t_0)=H^{\phi}_{-}(0,t_0)$ or not, which leads to the
fact that we cannot use the rigidity characterization for the
weighted heat kernel comparison (\ref{6-1-1-ex}) in Theorem
\ref{maintheorem}. Same story happens to
$\lambda_{1,\phi}(B(q,r_0))=\lambda_{1,\phi}\left(\mathscr{B}_{n}(q^{+},r_0)\right)$.
By the way, the isometric rigidity characterizations for the
eigenvalue comparisons (\ref{ECT-1}) and (\ref{ECT-2}) in Theorems
\ref{theo-1}-\ref{theo-2} will be proven in Appendix B using another
approach. }
\end{remark}

Except eigenvalue comparisons, the weighted heat kernel also has
some other applications in the field of spectral geometry --- such
as eigenvalue estimates, spectral isoperimetric inequalities, and so
on. We wish to refer readers to the survey \cite{GA} for this
viewpoint.

\section{Appendix A}
\renewcommand{\thesection}{\arabic{section}}
\renewcommand{\theequation}{\thesection.\arabic{equation}}
\setcounter{equation}{0}

In this appendix, we derive the explicit expression for the weighted
heat kernel in Example \ref{example4}, and the method is standard
separation of variables. Assume that the weighted heat kernel is of
the form
\begin{eqnarray*}
H^{\phi}(x_{0},y,t)=H^{\phi}(|y-x_{0}|,t)&=&\varphi(|y-x_{0}|)\psi(t)\times\exp\left(-\frac{|y-x_{0}|^2}{4t}\right)\\
&=&H^{\phi}(r,t)\\
&=&\varphi(r)\psi(t)\times\exp\left(-\frac{r^2}{4t}\right).
\end{eqnarray*}
By a direct calculation, one has
\begin{eqnarray*}
&&H^{\phi}_{t}=\varphi
e^{-\frac{r^2}{4t}}\left(\psi_{t}+\psi\frac{r^2}{4t}\right),\\
&&H^{\phi}_{r}=\psi e^{-\frac{r^2}{4t}}\left(\varphi_{r}-\varphi\frac{r}{2t}\right),\\
&&H^{\phi}_{rr}=\psi
e^{-\frac{r^2}{4t}}\left[\varphi_{rr}-\varphi_{r}\frac{r}{t}+\varphi\left(\frac{r^2}{4t^{2}}-\frac{1}{2t}\right)\right].
\end{eqnarray*}
So, the weighted heat equation
$H^{\phi}_{t}=\Delta_{\phi}H^{\phi}=H^{\phi}_{rr}-\phi_{r}H^{\phi}_{r}$,
with $\phi(r)=kr$, $k=\pm1$, implies
\begin{eqnarray*}
\varphi\left(\psi_{t}+\psi\frac{r^2}{4t}\right)=\psi\left[\varphi_{rr}-\varphi_{r}\frac{r}{t}+\varphi\left(\frac{r^2}{4t^{2}}-\frac{1}{2t}\right)\right]
-k\psi\left(\varphi_{r}-\varphi\frac{r}{2t}\right).
\end{eqnarray*}
It then follows that
\begin{eqnarray*}
\frac{\psi_{t}}{\psi}=\frac{\varphi_{rr}-k\varphi_{r}}{\varphi}-\frac{r}{2t}\cdot\frac{2\varphi_{r}-k\varphi}{\varphi}-\frac{1}{2t}.
\end{eqnarray*}
Hence, there exist constants $C_1$, $C_2$ such that
 \begin{eqnarray*}
\frac{\varphi_{rr}-k\varphi_{r}}{\varphi}=C_{1}, \qquad
\frac{(2\varphi_{r}-k\varphi)r}{\varphi}=C_{2}, \qquad
\frac{\psi_{t}}{\psi}=C_{1}-\frac{C_{2}+1}{2t}.
 \end{eqnarray*}
Solving the above ODEs yields
\begin{eqnarray*}
&&\varphi(r)=C_{3}\cdot\exp\left(\frac{1}{2}kr\right),\\
&&\psi(t)=C_{4}\cdot\frac{1}{\sqrt{t}}\exp\left(-\frac{4}{t}\right),
\end{eqnarray*}
where $C_{3}$ is independent of $r$, and $C_{4}$ is independent of
$t$. Using the initial condition
\begin{eqnarray*}
\lim\limits_{t\rightarrow0}H^{\phi}(x_{0},y,t)=\delta^{\phi}_{y}(x_0)
\end{eqnarray*}
at the point $x_{0}$, one can get
\begin{eqnarray*}
C_{3}C_{4}=\frac{1}{2\sqrt{\pi}}e^{\frac{1}{2}kr},
\end{eqnarray*}
which implies that $C_4$ should depend on $r$.

Summing up, the weighted heat kernel in Example \ref{example4} is
explicitly given by
\begin{eqnarray*}
H^{\phi}(r,t)=\frac{e^{\pm r}\cdot e^{-t/4}}{(4\pi
t)^{1/2}}\times\exp\left(-\frac{r^2}{4t}\right),
\end{eqnarray*}
which is equivalent to say
\begin{eqnarray*}
H^{\phi}(x_{0},y,t)=H^{\phi}(|y-x_{0}|,t)=\frac{e^{\pm|y-x_{0}|}\cdot
e^{-t/4}}{(4\pi
t)^{1/2}}\times\exp\left(-\frac{|y-x_{0}|^2}{4t}\right).
\end{eqnarray*}

\section{Appendix B}
\renewcommand{\thesection}{\arabic{section}}
\renewcommand{\theequation}{\thesection.\arabic{equation}}
\setcounter{equation}{0}

In order to prove Theorem \ref{theo-1}, we need the following
property.

\begin{lemma} \label{lemma6-1}
Assume that the potential function $\phi$ has \textbf{Property 1}.
The eigenfunction corresponding to the first Dirichlet eigenvalue
 of the
Witten-Laplacian on the geodesic ball $\mathscr{B}_{n}(q^{-},r_0)$
(resp., $\mathscr{B}_{n}(q^{+},r_0)$) may be chosen to be
nonnegative and is a radial function $\psi(s)$ satisfying
$\psi'(s)<0$ for $0<s\leq r_{0} <\min\{\ell(q),l\}$, with $\ell(q)$
defined as in (\ref{key-def1}) (resp., for $0<s\leq r_{0}
<\min\{\mathrm{inj}(q),l\}$, with $\mathrm{inj}(q)$ defined by
(\ref{inj-R})).
\end{lemma}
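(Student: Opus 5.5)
The argument has three steps: positivity of the first eigenfunction, its radial symmetry, and the sign of its radial derivative. Throughout, write $\mathscr{B}:=\mathscr{B}_{n}(q^{\pm},r_0)$ on the model space $[0,l)\times_{f}\mathbb{S}^{n-1}$. In geodesic polar coordinates $(s,\xi)$ about the base point, the metric is $ds^2+f^2(s)|d\xi|^2$ and $\phi=\phi(s)$ by \textbf{Property 1}. The Witten-Laplacian therefore splits as
\begin{eqnarray*}
\Delta_{\phi}=\frac{\partial^2}{\partial s^2}+\left[(n-1)\frac{f'(s)}{f(s)}-\phi'(s)\right]\frac{\partial}{\partial s}+\frac{1}{f^{2}(s)}\Delta_{\mathbb{S}^{n-1}}.
\end{eqnarray*}
This is the expression the proof of Lemma \ref{lemma3-3} refers to.

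\emph{Positivity.} By (\ref{chr-1}), if $\psi$ is a minimizer then so is $|\psi|$, since $|\nabla|\psi||=|\nabla\psi|$ almost everywhere. Hence $|\psi|$ is a nonnegative weak solution of (\ref{eigenp-w1}). Elliptic regularity and the strong maximum principle for $\Delta_{\phi}+\lambda_{1,\phi}$ then give $|\psi|>0$ in $\mathscr{B}$. It follows that $\lambda_{1,\phi}$ is simple: two positive first eigenfunctions cannot be $L^{2}_{\phi}$-orthogonal.

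\emph{Radial symmetry.} Every isometry $R\in O(n)$ of $\mathbb{S}^{n-1}$ acts on $\mathscr{B}$ as an isometry fixing $q^{\pm}$, and it preserves $\phi$ and $d\mu$. So $\psi\circ R$ is again a positive first eigenfunction with the same $L^{2}_{\phi}$-norm. By simplicity, $\psi\circ R=\psi$. Since $O(n)$ acts transitively on $\mathbb{S}^{n-1}$, $\psi=\psi(s)$. Consequently $\psi$ solves the ODE
\begin{eqnarray*}
\left(e^{-\phi}f^{n-1}\psi'\right)'=-\lambda_{1,\phi}\,e^{-\phi}f^{n-1}\psi,\qquad 0<s<r_0,
\end{eqnarray*}
with $\psi(r_0)=0$. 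Smoothness at the origin, guaranteed by \textbf{Property 1}, gives $\psi'(0)=0$.

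\emph{Monotonicity.} Integrate the ODE from $0$ to $s$. Because $f(0)=0$ and $\psi'$ is bounded near $0$, the boundary term at $0$ vanishes, giving
\begin{eqnarray*}
e^{-\phi(s)}f^{n-1}(s)\psi'(s)=-\lambda_{1,\phi}\int_{0}^{s}e^{-\phi(\tau)}f^{n-1}(\tau)\psi(\tau)\,d\tau.
\end{eqnarray*}
The right-hand side is strictly negative for $s\in(0,r_0]$, since $\lambda_{1,\phi}>0$, $f>0$ on $(0,l)$ and $\psi>0$ on $[0,r_0)$. Because $e^{-\phi}f^{n-1}>0$ there, we get $\psi'(s)<0$ for all $0<s\le r_0$, including the endpoint $s=r_0$. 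The restrictions $r_0<\min\{\ell(q),l\}$ or $r_0<\min\{\mathrm{inj}(q),l\}$ are used only to ensure $r_0<l$, so that $f>0$ on $(0,r_0]$ and the polar coordinates on the model ball are valid.

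The main obstacle is the behaviour at the base point $s=0$. One must justify that the radial $\psi$ is $C^1$ up to $s=0$ with $\psi'(0)=0$, which is what kills the boundary term $e^{-\phi}f^{n-1}\psi'|_{s=0}$. This follows because $\psi$ is a smooth function on the manifold $\mathscr{B}$ that is radially symmetric about $q^{\pm}$, together with smoothness of $\phi$ at the base point from \textbf{Property 1}; it is exactly here that \textbf{Property 1} enters.
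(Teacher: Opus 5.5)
Your proposal is correct and follows essentially the same route as the paper. The paper also proceeds through simplicity and one-signedness of the first eigenfunction (you prove this via $|\psi|$ and the strong maximum principle, where the paper cites a Courant-type nodal argument), then radiality from simplicity (you make the $O(n)$-invariance explicit), and finally integration of the divergence-form radial equation from $0$ to $s$, with the paper's $\alpha$ a constant multiple of your weight $e^{-\phi}f^{n-1}$; your explicit check that the boundary term at $s=0$ vanishes is cleaner than the paper's $\epsilon\rightarrow0^{+}$ normalization of $\alpha$.
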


\begin{proof}
The Witten-Laplacian on $\mathscr{B}_{n}(q^{-},r_0)$ in geodesic
spherical coordinates at $q^{-}$ is given by
\begin{eqnarray*}
\Delta_{\phi}=\frac{d^{2}}{ds^2}+(n-1)\frac{f'(s)}{f(s)}\frac{d}{ds}+\frac{1}{f^{2}(s)}\Delta_{\mathbb{S}^{n-1}}-\langle\phi'(s),\nabla\cdot\rangle,
\end{eqnarray*}
where $\Delta_{\mathbb{S}^{n-1}}$ denotes the Laplacian on
$\mathbb{S}^{n-1}$ with respect to the round metric. Using a similar
argument to Courant's nodal domain theorem for the Laplacian shown
in \cite[Chapter I]{IC}, one knows that the first Dirichlet
eigenvalue $\lambda_{1,\phi}\left(\mathscr{B}_{n}(q^{-},r_0)\right)$
of the Witten-Laplacian is simple (i.e. the eigenspace of this
eigenvalue has dimension $1$) and its eigenfunction $\psi$ would not
change sign on $\mathscr{B}_{n}(q^{-},r_0)$. One can also check the
proof of our recent work \cite[Theorem 1.2]{CMW} for details if he
or she is not familiar with this content. Without loss of
generality, one may choose that $\psi$ is nonnegative on
$\mathscr{B}_{n}(q^{-},r_0)$. More precisely, $\psi$ is strictly
positive within $\mathscr{B}_{n}(q^{-},r_0)$ and vanishes on the
boundary $\partial\mathscr{B}_{n}(q^{-},r_0)$. Since the dimension
of the eigenspace corresponding to
$\lambda_{1,\phi}\left(\mathscr{B}_{n}(q^{-},r_0)\right)$ is $1$, it
follows that $\psi$ is radial and satisfies
\begin{eqnarray*}
\frac{d^{2}\psi}{ds^2}+\left[(n-1)\frac{f'(s)}{f(s)}-\phi'(s)\right]\frac{d\psi}{ds}+\lambda_{1,\phi}\left(\mathscr{B}_{n}(q^{-},r_0)\right)\psi(s)=0,
\end{eqnarray*}
i.e.
\begin{eqnarray}  \label{6-1}
\psi''(s)+\left[(n-1)\frac{f'(s)}{f(s)}-\phi'(s)\right]\psi'(s)+\lambda_{1,\phi}\psi(s)=0.
\end{eqnarray}
Set
$\alpha(s):=e^{\int_{\epsilon}^{s}\left[(n-1)\frac{f'(t)}{f(t)}-\phi'(t)\right]dt}$,
$\epsilon\rightarrow0^{+}$, and then it is not hard to know that
(\ref{6-1}) becomes
\begin{eqnarray} \label{6-2}
(\alpha\psi')'+\lambda_{1,\phi}\left(\mathscr{B}_{n}(q^{-},r_0)\right)\alpha\psi=0.
\end{eqnarray}
By a direct calculation, one has from (\ref{6-2}) that
 \begin{eqnarray*}
(\alpha\psi'')'+\left[\lambda_{1,\phi}\left(\mathscr{B}_{n}(q^{-},r_0)\right)+\left(\frac{\alpha'}{\alpha}\right)'\right]\alpha\psi'=0.
 \end{eqnarray*}
Integrating both sides of (\ref{6-2}) on the interval $(0,s)$ with
$s<r_0$, one has
\begin{eqnarray*}
\alpha(s)\psi'(s)=-\lambda_{1,\phi}\int_{0}^{s}\alpha(t)\psi(t)dt,
\end{eqnarray*}
and this implies that $\psi'(s)|_{(0,r_0]}<0$ whenever
$\psi(s)|_{(0,r_0)}>0$ and
$\lambda_{1,\phi}\left(\mathscr{B}_{n}(q^{-},r_0)\right)>0$. The
situation for the eigenfunction on the geodesic ball
$\mathscr{B}_{n}(q^{+},r_0)$ has no essential difference. Our proof
is finished.
\end{proof}

\begin{remark}
\rm{ As maybe indicated before, the reason that we require $0<s\leq
r_{0} <\min\{\ell(q),l\}$ (resp., $0<s\leq r_{0}
<\min\{\mathrm{inj}(q),l\}$) is that when the equality holds in the
Cheng-type eigenvalue comparison (\ref{ECT-1}) (resp.,
(\ref{ECT-2})), the isometric rigidity between geodesic balls
$B(q,r_0)$ and $\mathscr{B}_{n}(q^{-},r_0)$ (resp.,
$\mathscr{B}_{n}(q^{+},r_0)$) can be assured. }
\end{remark}

Now, together with Lemma \ref{lemma6-1}, we wish to prove Theorem
\ref{theo-1} by using a similar argument to that in the proof of
\cite[Theorem 3.6]{fmi}.

\begin{proof}[Proof of Theorem \ref{theo-1}]
Let $\psi$ be an eigenfunction of the Dirichlet eigenvalue
$\lambda_{1,\phi}\left(\mathscr{B}_{n}(q^{-},r_0)\right)$, which, by
Lemma \ref{lemma6-1}, is simple, radial and can be chosen to be
nonnegative. Since $\psi\circ r$ vanishes on the boundary $\partial
B(q,r_0)$, where as in Section \ref{Sect2}, $r(x)=d_{M^{n}}(q,x)$
measures the radial distance from $q$ to any point $x\in \partial
B(q,r_0)$, it is not hard to know that $\psi\circ r\in
W^{1,2}_{0,\phi}(B(q,r_0))$ and
 \begin{eqnarray*}
 \lambda_{1,\phi}(B(q,r_0))\leq\frac{\int\limits_{B(q,r_0)}\langle d\psi\circ r,d\psi\circ r\rangle e^{-\phi }dv}
 {\int\limits_{B(q,r_0)}\langle \psi\circ r,\psi\circ r\rangle e^{-\phi }dv}
 \end{eqnarray*}
 by directly using the Rayleigh
characterization (\ref{chr-1}).

Using spherical geodesic coordinates centered at $q$ under the
integrals, one can obtain
 \begin{eqnarray*}
\int\limits_{B(q,r_0)}\langle d\psi\circ r,d\psi\circ r\rangle
e^{-\phi }dv=\int\limits_{\xi\in
S_{q}^{n-1}}\left[\int\limits_{0}^{a(\xi)}\left(\frac{d\psi}{ds}\right)^{2}f^{n-1}(s)\theta(s,\xi)e^{-\phi}ds\right]d\sigma
 \end{eqnarray*}
 and
  \begin{eqnarray*}
\int\limits_{B(q,r_0)}\langle \psi\circ r,\psi\circ r\rangle
e^{-\phi }dv=\int\limits_{\xi\in
S_{q}^{n-1}}\left[\int\limits_{0}^{a(\xi)}\psi^{2}(s)f^{n-1}(s)\theta(s,\xi)e^{-\phi}ds\right]d\sigma,
  \end{eqnarray*}
 where as in Section \ref{Sect2}, $d\sigma$ is the canonical measure of $\mathbb{S}^{n-1}\equiv S_{q}^{n-1}$, $a(\xi)=\min \{d_{\xi}, r_0\}$,
 and $\theta(s,\xi)=\left[J(s,\xi)/f(s)\right]^{n-1}$ on $M^{n}\setminus
 Cut(q)$ with $f(s)$, $J(s, \xi)$ determined by (\ref{ODE}) and (\ref{J}) respectively.

 On the other hand, by a direct calculation we have
\begin{eqnarray} \label{PT116-1}
&&\int\limits_{0}^{a(\xi)}\left(\frac{d\psi}{ds}\right)^{2}f^{n-1}(s)\theta(s,\xi)e^{-\phi}ds=
\psi(s)e^{-\phi}\left(\frac{d\psi(s)}{ds}\right)f^{n-1}(s)\theta(s,\xi)\Big{|}_{0}^{a(\xi)}-
\nonumber\\
&& \qquad
\int\limits_{0}^{a(\xi)}\frac{\psi(s)}{f^{n-1}(s)\theta(s,\xi)e^{-\phi}}\frac{d}{ds}\left[f^{n-1}(s)\theta(s,\xi)e^{-\phi}\frac{d\psi(s)}{ds}\right]
f^{n-1}(s)\theta(s,\xi)e^{-\phi}ds \qquad\qquad
\end{eqnarray}
and
\begin{eqnarray} \label{PT116-2}
&&\frac{1}{f^{n-1}(s)\theta(s,\xi)e^{-\phi}}\frac{d}{ds}\left[f^{n-1}(s)\theta(s,\xi)e^{-\phi}\frac{d\psi(s)}{ds}\right] \nonumber\\
 &&\quad = \frac{d^{2}\psi}{ds^{2}}+\left[(n-1)\frac{f'(s)}{f(s)}-\phi'(s)+\frac{d\theta(s,\xi)}{ds}\cdot\frac{1}{\theta(s,\xi)}\right]\frac{d\psi}{ds}\nonumber\\
 &&\quad =\frac{d^{2}\psi}{ds^{2}}+
\left\{(n-1)\frac{f'(s)}{f(s)}-\phi'(s)+(n-1)\frac{f(s)}{J(s,\xi)}\left[\frac{J(s,\xi)}{f(s)}\right]'\right\}\frac{d\psi}{ds}.
\end{eqnarray}
By Lemma \ref{lemma6-1}, $\psi'(s)<0$ for $0<s<r_{0}$. Together with
(\ref{6-1}), one has from (\ref{PT116-2}) that
 \begin{eqnarray} \label{PT116-3}
&&\frac{1}{f^{n-1}(s)\theta(s,\xi)e^{-\phi}}\frac{d}{ds}\left[f^{n-1}(s)\theta(s,\xi)e^{-\phi}\frac{d\psi(s)}{ds}\right]\nonumber\\
&&\qquad\qquad \qquad\qquad \qquad =-\lambda_{1,\phi}\left(\mathscr{B}_{n}(q^{-},r_0)\right)\psi(s)\nonumber\\
&&\qquad\qquad\qquad\qquad\qquad\qquad\qquad
 +
(n-1)\frac{f(s)}{J(s,\xi)}\left[\frac{J(s,\xi)}{f(s)}\right]'\frac{d\psi}{ds}
\nonumber\\
&&\qquad\qquad \qquad\qquad\qquad \geq
-\lambda_{1,\phi}\left(\mathscr{B}_{n}(q^{-},r_0)\right)\psi(s)
\qquad
 \end{eqnarray}
by using the fact
$d\theta(s,\xi)/ds=(n-1)\left[J(s,\xi)/f(s)\right]^{n-2}\left[J(s,\xi)/f(s)\right]'\leq0$
shown in Theorem \ref{BishopI}. Substituting (\ref{PT116-3}) into
(\ref{PT116-1}) yields
\begin{eqnarray} \label{PT116-4}
 \int\limits_{0}^{a(\xi)}\left(\frac{d\psi}{ds}\right)^{2}f^{n-1}(s)\theta(s,\xi)e^{-\phi}ds=
 \psi(a(\xi))\psi'(a(\xi))f^{n-1}(a(\xi))\theta(a(\xi),\xi)e^{-\phi(a(\xi))}\nonumber\\
 +
 \int\limits_{0}^{a(\xi)}\psi^{2}(s)\lambda_{1,\phi}\left(\mathscr{B}_{n}(q^{-},r_0)\right)\cdot
 f^{n-1}(s)\theta(s,\xi)e^{-\phi}ds\nonumber\\
 \leq \int\limits_{0}^{a(\xi)}\psi^{2}(s)\lambda_{1,\phi}\left(\mathscr{B}_{n}(q^{-},r_0)\right)\cdot
 f^{n-1}(s)\theta(s,\xi)e^{-\phi}ds,
 \end{eqnarray}
 where the facts $\psi'(0)=0$ and $\psi(a(\xi))\psi'(a(\xi))\leq0$ have been used.
 Therefore, it follows from (\ref{PT116-4}) that
  \begin{eqnarray*}
&& \int\limits_{\xi\in
 S_{q}^{n-1}}\left[\int\limits_{0}^{a(\xi)}\left(\frac{d\psi}{ds}\right)^{2}f^{n-1}(s)\theta(s,\xi)e^{-\phi}ds\right]d\sigma
 \leq \\
&&\qquad\qquad\qquad \int\limits_{\xi\in
 S_{q}^{n-1}}\left[\int\limits_{0}^{a(\xi)}\psi^{2}(s)\lambda_{1,\phi}\left(\mathscr{B}_{n}(q^{-},r_0)\right)\cdot
 f^{n-1}(s)\theta(s,\xi)e^{-\phi}ds\right],
\end{eqnarray*}
which directly implies
$\lambda_{1,\phi}(B(q,r_0))\leq\lambda_{1,\phi}\left(\mathscr{B}_{n}(q^{-},r_0)\right)$.
When equality holds, one has $a(\xi)=r_{0}$ for almost all $\xi\in
S^{n-1}_{q}\equiv\mathbb{S}^{n-1}$. Hence, $a(\xi)\equiv r_{0}$ for
all $\xi$, and from which we have $J(s,\xi)=f(s)$, and then
$\mathrm{vol}(B(q,r_0))=\mathrm{vol}(\mathscr{B}_n(q^{-},r_0))$.
This implies from Theorem \ref{BishopI} that $B(q,r_0)$ is isometric
to $\mathscr{B}_n(q^{-},r_0)$.
\end{proof}

Now, we wish to prove Theorem \ref{theo-2}. However, we need first
the following fact, which can be seen as a weighted version of
Barta's lemma (see \cite{BJ}).

\begin{lemma}  \label{lemma6-3}
Let $\Omega$ be a normal domain in a Riemannian manifold with
density $e^{-\phi}$, where $\phi$ is a smooth real-valued function
defined on this manifold, and let $h\in C^{2}(\Omega)\cap
C^{0}(\bar{\Omega})$, with $h|_{\Omega}>0$ and
$h|_{\partial\Omega}=0$. Then
 \begin{eqnarray*}
 \inf_{\Omega}\left(\frac{\Delta_{\phi}h}{h}\right)\leq-\lambda_{1,\phi}(\Omega)\leq\sup_{\Omega}\left(\frac{\Delta_{\phi}h}{h}\right),
 \end{eqnarray*}
  where $\lambda_{1,\phi}(\Omega)$ denotes the lowest Dirichlet eigenvalue of the Witten-Laplacian on the
  domain $\Omega$.
\end{lemma}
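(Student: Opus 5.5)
The plan is to transplant the classical proof of Barta's lemma to the weighted setting. The key tool is the Green formula (\ref{GF}) with respect to $d\mu=e^{-\phi}dv$.

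Let $u>0$ be a first Dirichlet eigenfunction of $\Delta_{\phi}$ on $\Omega$. It can be taken positive because $\lambda_{1,\phi}(\Omega)$ is simple and its eigenfunctions do not change sign, as recalled in the proof of Lemma \ref{lemma6-1}. Given $h$ as in the statement, write $u=hw$ with $w:=u/h$, which is smooth and positive in $\Omega$. A direct computation gives
\begin{eqnarray*}
\Delta_{\phi}u=w\Delta_{\phi}h+h\Delta_{\phi}w+2\langle\nabla h,\nabla w\rangle .
\end{eqnarray*}
Dividing by $u=hw$ and using $\Delta_{\phi}u=-\lambda_{1,\phi}u$, we obtain
\begin{eqnarray*}
-\lambda_{1,\phi}(\Omega)=\frac{\Delta_{\phi}h}{h}+\frac{h\Delta_{\phi}w+2\langle\nabla h,\nabla w\rangle}{hw}
=\frac{\Delta_{\phi}h}{h}+\frac{e^{\phi}\mathrm{div}\left(e^{-\phi}h^{2}\nabla w\right)}{h^{2}w}.
\end{eqnarray*}
The second equality uses $\Delta_{\phi}=e^{\phi}\mathrm{div}(e^{-\phi}\nabla\cdot)$.

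Next, multiply this identity by $h^{2}w\,e^{-\phi}=hu\,e^{-\phi}$ and integrate over $\Omega$. This yields
\begin{eqnarray*}
\int_{\Omega}\left(-\lambda_{1,\phi}-\frac{\Delta_{\phi}h}{h}\right)hu\,d\mu=\int_{\Omega}\mathrm{div}\left(e^{-\phi}h^{2}\nabla w\right)dv .
\end{eqnarray*}
The main task is to show that the right-hand side vanishes. Formally it equals the boundary flux $\int_{\partial\Omega}e^{-\phi}h^{2}\partial_{\nu}w$. Now $h^{2}\nabla w=h\nabla u-u\nabla h$, and both $h$ and $u$ vanish on $\partial\Omega$, so the flux is zero.

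This step is the obstacle. The hypothesis only gives $h\in C^{0}(\bar\Omega)$, so $\nabla h$ may blow up at the boundary. To handle this, I will apply the divergence theorem on an exhaustion $\Omega_{\varepsilon}\uparrow\Omega$ by smooth subdomains. I will use that $u\in C^{1}(\bar\Omega)$ for a normal (regular) domain, together with $h,u\to0$ uniformly at $\partial\Omega$, to argue that the flux over $\partial\Omega_\varepsilon$ tends to $0$.

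If controlling $u\nabla h$ is delicate, there is an alternative. Assuming $\Delta_\phi h/h$ is bounded (otherwise the relevant inequality is trivial), use the weighted Green formula in the symmetric form
\begin{eqnarray*}
\int_{\Omega_\varepsilon}(u\Delta_\phi h-h\Delta_\phi u)\,d\mu=\int_{\partial\Omega_\varepsilon}(u\partial_\nu h-h\partial_\nu u)e^{-\phi}.
\end{eqnarray*}
One can then choose level sets $\Omega_\varepsilon=\{h>\varepsilon\}$ for regular values $\varepsilon$. On such a level set $\partial_\nu h\le 0$ and $u\ge0$, so the term $u\partial_\nu h$ has a sign. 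This gives a one-sided inequality, which is exactly what each half of the statement requires.

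Once the integral vanishes (or has the right sign), the conclusion is immediate. Since $hu>0$ in $\Omega$, the function $-\lambda_{1,\phi}-\Delta_{\phi}h/h$ cannot be of one strict sign throughout $\Omega$. Hence
\begin{eqnarray*}
\inf_{\Omega}\frac{\Delta_{\phi}h}{h}\leq-\lambda_{1,\phi}(\Omega)\leq\sup_{\Omega}\frac{\Delta_{\phi}h}{h}.
\end{eqnarray*}
When $\phi$ is constant, this argument reduces line by line to Barta's original one.
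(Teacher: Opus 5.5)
Your core computation is the same as the paper's. The paper writes $\Psi=\mathcal{F}+h$ where you write $u=hw$, but both reduce to showing that $h\Delta_{\phi}u-u\Delta_{\phi}h=e^{\phi}\mathrm{div}(e^{-\phi}h^{2}\nabla w)$ integrates to zero against $d\mu$, followed by the same sign-change conclusion. The paper simply asserts this vanishing via Green's formula (\ref{GF}), tacitly assuming enough regularity of $h$ at $\partial\Omega$.

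\textbf{The gap} is in the step you single out as the obstacle: neither of your justifications works as claimed. The first one fails because $u\in C^{1}(\bar\Omega)$ and $u\to0$ give no control of the flux term $u\,\partial_{\nu}h$ when $\nabla h$ blows up, and $h\in C^{0}(\bar\Omega)$ allows that. The second one, cutting along $\Omega_{\varepsilon}=\{h>\varepsilon\}$, proves only one half of the statement, not ``each half''. On $\partial\Omega_{\varepsilon}$ you have $u\,\partial_{\nu}h\leq0$ and $h\equiv\varepsilon$. Moreover $-\varepsilon\int_{\partial\Omega_{\varepsilon}}\partial_{\nu}u\,e^{-\phi}=\varepsilon\lambda_{1,\phi}\int_{\Omega_{\varepsilon}}u\,d\mu$. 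So Green's formula gives
\[
\int_{\Omega_{\varepsilon}}\left(\Delta_{\phi}h+\lambda_{1,\phi}h\right)u\,d\mu\leq\varepsilon\lambda_{1,\phi}\int_{\Omega}u\,d\mu .
\]
This excludes $\Delta_{\phi}h/h\geq-\lambda_{1,\phi}+\delta$ on $\Omega$, i.e. it proves the left inequality. But it is perfectly compatible with $\Delta_{\phi}h/h\leq-\lambda_{1,\phi}-\delta$. So the right inequality $-\lambda_{1,\phi}(\Omega)\leq\sup_{\Omega}(\Delta_{\phi}h/h)$ remains unproved, and that is exactly the half used in the proof of Theorem \ref{theo-2}.

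\textbf{How to close it.} For the right half, cut along level sets of the eigenfunction instead. On $U_{\varepsilon}=\{u>\varepsilon\}$ one has $-h\,\partial_{\nu}u\geq0$ and $\int_{\partial U_{\varepsilon}}u\,\partial_{\nu}h\,e^{-\phi}=\varepsilon\int_{U_{\varepsilon}}\Delta_{\phi}h\,d\mu$, whence
\[
\int_{U_{\varepsilon}}\left[(u-\varepsilon)\Delta_{\phi}h+\lambda_{1,\phi}uh\right]d\mu\geq0 .
\]
This contradicts $\Delta_{\phi}h\leq-(\lambda_{1,\phi}+\delta)h$ as $\varepsilon\to0$, since $u-\varepsilon>0$ on $U_{\varepsilon}$ and $\varepsilon\int_{\Omega}h\,d\mu\to0$.

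Alternatively, Picone's identity with compactly supported test functions (Lemma \ref{lemma7-1} with $p=2$) gives $\lambda_{1,\phi}(\Omega)\geq\inf_{\Omega}(-\Delta_{\phi}h/h)$ for every positive $h\in C^{2}(\Omega)$, with no boundary condition on $h$ at all.

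The asymmetry is genuine: only the left inequality needs $h|_{\partial\Omega}=0$ (take $h\equiv1$). That is why a single one-sided cut cannot serve both halves.

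A minor point for the left half: for $h$ merely $C^{2}$ and $n\geq3$, Sard's theorem is not available to supply regular values. Replace the sharp cut $\{h>\varepsilon\}$ by a smooth cutoff $\eta(h)$; the same sign argument goes through.
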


\begin{proof}
Let $\Psi$ be an eigenfunction corresponding to
$\lambda_{1,\phi}(\Omega)$, with $\Psi|_{\Omega}>0$ and
$\Psi|_{\partial\Omega}=0$. This choice of the eigenfunction $\Psi$
can be assured by \cite[Theorem 1.2]{CMW}, i.e. the number of nodal
domains of the eigenfunction $\Psi$ is $1$, which implies $\Psi$
would not change sign on $\Omega$. Set
 \begin{eqnarray*}
\mathcal {F}:=\Psi-h,
 \end{eqnarray*}
and then it follows from the fact
$\Delta_{\phi}\Psi+\lambda_{1,\phi}\Psi=0$ that
 \begin{eqnarray} \label{Plemma6-2-1}
-\lambda_{1,\phi}(\Omega)&=&-\lambda_{1,\phi}=\frac{\Delta_{\phi}\Psi}{\Psi}=\frac{\Delta
\Psi}{\Psi}
-\frac{\langle\nabla\phi,\nabla\Psi\rangle}{\Psi}\nonumber\\
&=&\frac{\Delta(\mathcal{F}+h)}{\mathcal{F}+h}-\frac{\langle\nabla\phi,\nabla(\mathcal{F}+h)\rangle}{\mathcal{F}+h}\nonumber\\
&=& \frac{\Delta h}{h}+\frac{h\Delta\mathcal{F}-\mathcal{F}\Delta
h}{h(h+\mathcal{F})}-\frac{\langle\nabla\phi,\nabla h\rangle}{h}+
\frac{\mathcal{F}\langle\nabla\phi,\nabla h\rangle-h\langle\nabla\phi,\nabla\mathcal{F}\rangle}{h(h+\mathcal{F})}\nonumber\\
&=&\frac{\Delta_{\phi}h}{h}+\frac{h\Delta_{\phi}\mathcal{F}-\mathcal{F}\Delta_{\phi}h}{h(\mathcal{F}+h)}.
 \end{eqnarray}
 Since $h|_{\Omega}>0$,
$h|_{\partial\Omega}=0$, $h(\mathcal{F}+h)|_{\Omega}>0$ and
$h(\mathcal{F}+h)|_{\partial\Omega}=0$, it is not hard to know
\begin{eqnarray*}
\int_{\Omega}(h\Delta_{\phi}\mathcal{F}-\mathcal{F}\Delta_{\phi}h)e^{-\phi}dv=0,
\end{eqnarray*}
which implies that in the domain $\Omega$, the function
$h\Delta_{\phi}\mathcal{F}-\mathcal{F}\Delta_{\phi}h$ would have
both positive and negative values at different parts. Then it is
easy to know from (\ref{Plemma6-2-1}) that
$$-\lambda_{1,\phi}(\Omega)\leq\sup_{\Omega}\left(\frac{\Delta_{\phi}h}{h}\right)$$
holds if $h\Delta_{\phi}\mathcal{F}-\mathcal{F}\Delta_{\phi}h$ takes
negative values, while
$$ \inf_{\Omega}\left(\frac{\Delta_{\phi}h}{h}\right)\leq-\lambda_{1,\phi}(\Omega)$$
holds if $h\Delta_{\phi}\mathcal{F}-\mathcal{F}\Delta_{\phi}h$ takes
positive values. This completes the proof of Lemma \ref{lemma6-3}.
\end{proof}

\begin{remark}
\rm{ From the above proof of Lemma \ref{lemma6-3}, one easily knows
that its assertion is valid without requiring that the potential
function $\phi$ has \textbf{Property 1}.}
\end{remark}

\begin{proof}[Proof of Theorem \ref{theo-2}]
We shall use a similar argument to that in the proof of
\cite[Theorem 4.4]{fmi}. Let $\psi(s):[0,r_0]\rightarrow[0,\infty)$
 be a nonnegative
radial eigenfunction of
$\lambda_{1,\phi}\left(\mathscr{B}_{n}(q^{+},r_0)\right)$ of
$M^{+}$. By Lemma \ref{lemma6-1}, one knows that the eigenfunction
$\psi$ satisfies (\ref{6-1}), with $\psi'(0)=\psi(r_0)=0$,
$\psi\geq0$ on $[0,r_0)$, and $\frac{d\psi}{ds}<0$ on $(0,r_{0})$.
Define a function
$\mathcal{F}:\overline{B(q,r_{0})}\rightarrow[0,\infty)$ by
\begin{eqnarray*}
\mathcal{F}(\exp_{q}(s\xi))=\psi(s)
\end{eqnarray*}
for $(s,\xi)\in[0,r_{0})\times S_{q}^{n-1}$. Then by a
straightforward calculation in geodesic spherical coordinates around
$q$ and using the characterization (\ref{chr-1}), one can get
 \begin{eqnarray*}
\frac{\Delta_{\phi}\mathcal{F}}{\mathcal{F}}(\exp_{q}(s\xi))\leq\frac{1}{\psi}\left[\psi''(s)+(n-1)\frac{f'(s)}{f(s)}\psi'(s)-\phi'(s)\psi'(s)\right]
=-\lambda_{1,\phi}\left(\mathscr{B}_{n}(q^{+},r_0)\right),
 \end{eqnarray*}
which, together with Lemma \ref{lemma6-3}, implies
\begin{eqnarray*}
-\lambda_{1,\phi}(B(q,r))\leq\sup\frac{\Delta_{\phi}\mathcal{F}}{\mathcal{F}}(\exp_{q}(s\xi))\leq-\lambda_{1,\phi}\left(\mathscr{B}_{n}(q^{+},r_0)\right),
\end{eqnarray*}
i.e.
$\lambda_{1,\phi}(B(q,r))\geq\lambda_{1,\phi}\left(\mathscr{B}_{n}(q^{+},r_0)\right)$.
When the equality holds, the isometric rigidity between $B(q,r_0)$
and $\mathscr{B}_{n}(q^{+},r_0)$ can be obtained by directly using
Theorem \ref{BishopII}.
\end{proof}

As inspired by \cite{JM4}, we found that the assertions in Theorem
\ref{theo-1} can be improved to the case of weighted $p$-Laplacian
$\Delta_{p,\phi}$, $1<p<\infty$. In order to get that improvement,
we need first the following fact.

\begin{lemma} \label{lemma6-5}
Assume that the potential function $\phi$ has \textbf{Property 1}.
The eigenfunction corresponding to the first Dirichlet eigenvalue
 of the
 weighted $p$-Laplacian ($1<p<\infty$) on the geodesic ball $\mathscr{B}_{n}(q^{-},r_0)$  may be chosen to be nonnegative and is
a radial function $\psi(s)$ satisfying $\psi'(s)<0$ for $0<s\leq
r_{0} <\min\{\ell(q),l\}$, with $\ell(q)$ defined as in
(\ref{key-def1}). The conclusion is still valid for the
eigenfunction belonging to the first Dirichlet eigenvalue
 of the
 weighted $p$-Laplacian on the geodesic ball
 $\mathscr{B}_{n}(q^{+},r_0)$ for $0<s\leq r_{0}
<\min\{\mathrm{inj}(q),l\}$, with $\mathrm{inj}(q)$ defined by
(\ref{inj-R}).
\end{lemma}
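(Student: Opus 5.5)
The plan follows the proof of Lemma \ref{lemma6-1}, replacing linearity by the standard facts on the weighted $p$-Laplacian recalled in Section \ref{Sect2}.

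\emph{Step 1: sign and simplicity.} By the listed facts for (\ref{eigenp-wpl}), $\lambda_{1,p}^{\phi}(\mathscr{B}_{n}(q^{-},r_0))$ is simple and its eigenfunctions do not change sign. So we may take $\psi\geq0$. The strong maximum principle for $p$-Laplace type operators (V\'{a}zquez's Hopf lemma, adapted to the smooth drift $e^{-\phi}$) upgrades this to $\psi>0$ in the open ball.

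\emph{Step 2: radial symmetry.} This step avoids using a basis of the eigenspace. Because $\phi$ is radial by \textbf{Property 1}, and the metric $ds^{2}+f^{2}(s)|d\xi|^{2}$ is invariant under $O(n)$ acting on the $\mathbb{S}^{n-1}$ factor, for every rotation $R$ the function $\psi\circ R$ is again a positive first eigenfunction. This holds because both the Rayleigh quotient (\ref{chr-2}) and the weighted measure are $O(n)$-invariant. Simplicity gives $\psi\circ R=c_R\psi$. Normalizing in $L^{p}_{\phi}$ and using positivity forces $c_R=1$, so $\psi=\psi(s)$.

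\emph{Step 3: the radial equation.} In geodesic spherical coordinates the equation becomes
\begin{eqnarray*}
\left(\alpha(s)|\psi'|^{p-2}\psi'\right)'+\lambda_{1,p}^{\phi}\,\alpha(s)\psi^{p-1}=0,\qquad \alpha(s)=f^{n-1}(s)e^{-\phi(s)}.
\end{eqnarray*}
This should be read in the weak sense. Testing the weak formulation against radial functions and using $\alpha(0)=0$ from $f(0)=0$, one gets
\begin{eqnarray*}
\alpha(s)|\psi'(s)|^{p-2}\psi'(s)=-\lambda_{1,p}^{\phi}\int_{0}^{s}\alpha(t)\psi^{p-1}(t)\,dt .
\end{eqnarray*}
The right-hand side is strictly negative for $0<s\leq r_0$, since $\psi>0$ on $(0,r_0)$ and $\lambda_{1,p}^{\phi}>0$ by (\ref{chr-2}). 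Since $\alpha(s)>0$ for $s>0$, it follows that $\psi'(s)<0$ on $(0,r_0]$. The argument for $\mathscr{B}_{n}(q^{+},r_0)$ is identical, using only that $M^{+}$ is also a warped product with $f>0$ on $(0,l)$.

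\emph{Main obstacle: regularity.} Weak solutions are only $C^{1,\beta}$, so the integrated identity must be justified without second derivatives. The plan is to derive it directly from the weak formulation with test functions $\eta(s)$ supported in $[0,s_0]$, which avoids differentiating $|\psi'|^{p-2}\psi'$. The behaviour at $s=0$ also needs care, namely the boundary term $\alpha|\psi'|^{p-2}\psi'\to0$; this is handled by $\alpha(0)=0$ together with the boundedness of $\psi'$ near the centre.
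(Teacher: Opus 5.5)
Your proposal is correct and takes the paper's route: simplicity and constant sign of the first eigenfunction, radial symmetry from simplicity, the divergence-form radial equation $(\alpha|\psi'|^{p-2}\psi')'+\lambda^{\phi}_{1,p}\alpha\psi^{p-1}=0$, and integration over $(0,s)$ to obtain the sign of $\psi'$. Your additions fill in points the paper only asserts, namely:
\begin{itemize}
\item the $O(n)$-invariance argument for radiality, which is exactly where \textbf{Property 1} enters;
\item the normalization $\alpha=f^{n-1}e^{-\phi}$ with $\alpha(0)=0$, a clean replacement for the paper's $\epsilon$-dependent $\alpha$;
\item strict positivity of $\psi$ via the strong maximum principle;
\item derivation of the integrated identity from the weak formulation with radial test functions.
\end{itemize}
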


\begin{proof}
As mentioned at the end of Section \ref{Sect2}, by making suitable
adjustments to the arguments in \cite{AA, AT, BK, GAPA, LP} and
using the variational principle, some basic properties can be
obtained, including:
\begin{itemize}
\item \emph{The first Dirichlet eigenvalue $\lambda_{1,p}^{\phi}(\Omega)$  is simple, isolated, and eigenfunctions
associated with $\lambda_{1,p}^{\phi}(\Omega)$ do not change sign.}
\end{itemize}
Without loss of generality, one may choose that $\psi$ is
nonnegative on $\mathscr{B}_{n}(q^{-},r_0)$.  The first assertion of
Lemma \ref{lemma6-5} follows directly
 by choosing $\Omega=\mathscr{B}_{n}(q^{-},r_0)$. If furthermore $\phi$ has \textbf{Property
 1}, then the weighted $p$-Laplacian $\Delta_{p,\phi}$ on $\mathscr{B}_{n}(q^{-},r_0)$ in geodesic
spherical coordinates at $q^{-}$ is given by
 \begin{eqnarray*}
&&\Delta_{p,\phi}=|\nabla(\cdot)|^{p-2}\frac{d^{2}}{ds^{2}}+\frac{d}{ds}(|\nabla(\cdot)|^{p-2})\frac{d}{ds}+
(n-1)\frac{f'(s)}{f(s)}|\nabla(\cdot)|^{p-2}\frac{d}{ds}
+\frac{1}{f^{2}(s)}\Delta_{p, \mathbb{S}^{n-1}}\\
&&\qquad\qquad\qquad\qquad\qquad\qquad\qquad\qquad\qquad
-|\nabla(\cdot)|^{p-2}\langle\phi'(s),\nabla\cdot\rangle,
 \end{eqnarray*}
where $\Delta_{p,\mathbb{S}^{n-1}}$ denotes the $p$-Laplacian on
$\mathbb{S}^{n-1}$ with respect to the round metric. Since
$\lambda_{1,p}^{\phi}(\mathscr{B}_{n}(q^{-},r_0))$  is simple,
isolated, which implies that the dimension of the eigenspace
corresponding to
$\lambda_{1,\phi}\left(\mathscr{B}_{n}(q^{-},r_0)\right)$ is $1$, it
follows that $\psi$ is radial and satisfies
\begin{eqnarray} \label{6-8}
&&(p-1)|\psi'(s)|^{p-2}\psi''(s)+\left[(n-1)\frac{f'(s)}{f(s)}-\phi'(s)\right]|\psi'(s)|^{p-2}\psi'(s)\nonumber\\
&&\qquad\qquad\qquad\qquad\qquad\qquad\qquad
+\lambda^{\phi}_{1,p}(\mathscr{B}_{n}(q^{-},r_0))
|\psi(s)|^{p-2}\psi(s)=0
\end{eqnarray}
 and the boundary condition $\psi'(0)=0$ and $\psi(r_0)=0$. Set
$\alpha(s)=e^{\int_{\epsilon}^{s}\left[(n-1)\frac{f'(t)}{f(t)}-\phi'(t)\right]dt}$,
$\epsilon\rightarrow0^{+}$, and then it is not hard to know that
(\ref{6-8}) becomes
\begin{eqnarray} \label{6-9}
(|\psi'|^{p-2}\alpha\psi')'+\lambda^{\phi}_{1,p}\left(\mathscr{B}_{n}(q^{-},r_0)\right)\alpha\psi|\psi|^{p-2}=0.
\end{eqnarray}
Integrating both sides of (\ref{6-9}) on the interval $(0,s)$ with
$s<r_0$, one has
\begin{eqnarray*}
\alpha(s)|\psi'(s)|^{p-2}\psi'(s)=-\lambda^{\phi}_{1,p}\int_{0}^{s}\alpha(t)|\psi(t)|^{p-2}\psi(t)dt,
\end{eqnarray*}
and this implies that $\psi'(s)|_{(0,r_0]}<0$ whenever
$\psi(s)|_{(0,r_0)}>0$ and
$\lambda^{\phi}_{1,p}\left(\mathscr{B}_{n}(q^{-},r_0)\right)>0$.
\end{proof}

We have the following Cheng-type eigenvalue comparison theorem of
the weighted $p$-Laplacian.

\begin{theorem} \label{theo-3}
Assume that an $n$-dimensional ($n\geq2$) complete Riemannian
manifold $M^n$ has a radial Ricci curvature lower bound
$(n-1)\kappa(r)$ w.r.t. $q\in M^n$, where $r=d_{M^n}(q,\cdot)$
denotes the Riemannian distance from $q$. Assume that the potential
function $\phi$ has \textbf{Property 1}. Then  for $1<p<\infty$ and
$r_{0}<\min\{\ell(q),l\}$ with $\ell(q)$ defined as in
(\ref{key-def1}), one has
\begin{eqnarray} \label{ECT-7}
\lambda^{\phi}_{1,p}(B(q,r_0))\leq\lambda^{\phi}_{1,p}\left(\mathscr{B}_{n}(q^{-},r_0)\right),
\end{eqnarray}
and the equality holds if and only if $B(q,r_0)$ is isometric to
$\mathscr{B}_{n}(q^{-},r_0)$.
\end{theorem}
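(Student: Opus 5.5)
We adapt the proof of Theorem \ref{theo-1} in Appendix B to the nonlinear setting, using the Rayleigh characterization (\ref{chr-2}) and Lemma \ref{lemma6-5}.

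\textbf{Step 1 (test function).} Let $\psi$ be the nonnegative radial first eigenfunction of $\lambda^{\phi}_{1,p}(\mathscr{B}_{n}(q^{-},r_0))$ given by Lemma \ref{lemma6-5}. It satisfies (\ref{6-8}), $\psi'(0)=0$, $\psi(r_0)=0$, and $\psi'<0$ on $(0,r_0]$. Since $\psi\circ r$ is Lipschitz and vanishes on $\partial B(q,r_0)$, we have $\psi\circ r\in W^{1,p}_{0,\phi}(B(q,r_0))$ and $|\nabla(\psi\circ r)|=|\psi'(r)|$ a.e. (the cut locus has measure zero). By (\ref{chr-2}),
\[
\lambda^{\phi}_{1,p}(B(q,r_0))\leq \frac{\int_{S^{n-1}_q}\int_0^{a(\xi)}|\psi'|^{p}f^{n-1}\theta e^{-\phi}\,ds\,d\sigma}{\int_{S^{n-1}_q}\int_0^{a(\xi)}\psi^{p}f^{n-1}\theta e^{-\phi}\,ds\,d\sigma},
\]
where $a(\xi)=\min\{d_\xi,r_0\}$ and $\theta=(J/f)^{n-1}$.

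\textbf{Step 2 (radial integration by parts).} For fixed $\xi$, write $|\psi'|^{p}=\psi'\cdot|\psi'|^{p-2}\psi'$ and integrate by parts:
\[
\int_0^{a(\xi)}|\psi'|^{p}f^{n-1}\theta e^{-\phi}ds=\Big[\psi|\psi'|^{p-2}\psi' f^{n-1}\theta e^{-\phi}\Big]_0^{a(\xi)}-\int_0^{a(\xi)}\psi\,\big(|\psi'|^{p-2}\psi' f^{n-1}\theta e^{-\phi}\big)'ds .
\]
Expanding the derivative and using (\ref{6-8}), in the divergence form (\ref{6-9}) with $\alpha=f^{n-1}e^{-\phi}$ up to a constant, gives
\[
\big(|\psi'|^{p-2}\psi' f^{n-1}\theta e^{-\phi}\big)'=-\lambda^{\phi}_{1,p}(\mathscr{B}_{n}(q^{-},r_0))\,\psi^{p-1}f^{n-1}\theta e^{-\phi}+|\psi'|^{p-2}\psi'\,\theta'\,f^{n-1}e^{-\phi}.
\]
By Theorem \ref{BishopI}, $\theta'\leq0$, and $\psi'<0$, so the last term is nonnegative. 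Since $\psi\geq0$, it therefore enters the right-hand side of the integration by parts with a nonpositive sign.

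The boundary term vanishes at $s=0$ (because $f(0)=0$ and $\psi'(0)=0$). At $s=a(\xi)$ it is $\leq0$, since $\psi\geq0$ and $\psi'<0$; it vanishes when $a(\xi)=r_0$. Hence
\[
\int_0^{a(\xi)}|\psi'|^{p}f^{n-1}\theta e^{-\phi}ds\leq \lambda^{\phi}_{1,p}(\mathscr{B}_{n}(q^{-},r_0))\int_0^{a(\xi)}\psi^{p}f^{n-1}\theta e^{-\phi}ds .
\]
Integrating over $\xi$ and combining with Step 1 yields (\ref{ECT-7}).

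\textbf{Step 3 (rigidity).} If equality holds in (\ref{ECT-7}), every discarded term must vanish for a.e. $\xi$. The boundary term at $a(\xi)<r_0$ is strictly negative, because $\psi(a(\xi))>0$ and $\psi'(a(\xi))<0$. So $a(\xi)=r_0$ for a.e. $\xi$, hence for all $\xi$ by continuity of $a$. The vanishing of $\int\psi|\psi'|^{p-2}\psi'\theta' f^{n-1}e^{-\phi}$, together with $\psi>0$ and $\psi'<0$ on $(0,r_0)$, forces $\theta'\equiv0$, i.e.\ $J(s,\xi)=f(s)$. Then $\mathrm{vol}(B(q,r_0))=\mathrm{vol}(\mathscr{B}_n(q^{-},r_0))$, and Theorem \ref{BishopI} gives the isometry. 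The converse direction is trivial.

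\textbf{Main obstacle.} The delicate point is justifying that the radial eigenfunction is regular enough for Step 2. For $p\neq2$, $\psi$ is only $C^{1,\beta}$ in general, and (\ref{6-8}) degenerates where $\psi'=0$. To handle this, we would work with the integrated first-order identity
\[
\alpha|\psi'|^{p-2}\psi'(s)=-\lambda\int_0^s\alpha\psi^{p-1}\,dt
\]
from the proof of Lemma \ref{lemma6-5}. It shows that $|\psi'|^{p-2}\psi'\in C^1$, so the product rule in Step 2 applies directly, without ever needing $\psi''$. Near $s=0$, one checks that the boundary term tends to zero using $f^{n-1}(s)\to0$.
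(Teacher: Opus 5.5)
Your proposal is correct and follows the paper's own proof essentially step for step. It uses the same test function $\psi\circ r$ in (\ref{chr-2}), the same radial integration by parts against $f^{n-1}\theta e^{-\phi}$ via (\ref{6-9}) and $\theta'\leq0$ from Theorem \ref{BishopI}, and the same rigidity route through $a(\xi)\equiv r_0$, $J=f$ and the volume-equality case of Theorem \ref{BishopI}; your handling of the regularity of $|\psi'|^{p-2}\psi'$ and of the equality case is in fact more explicit than the paper's.

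One small ordering fix is needed in Step 3. The boundary term also carries the factor $J^{n-1}(a(\xi),\xi)$, which vanishes when $\gamma_{\xi}(d_{\xi})$ is conjugate to $q$. So first use the vanishing of the $\theta'$-term to get $\theta(\cdot,\xi)\equiv1$ on $[0,a(\xi)]$, and only then conclude that the boundary term is strictly negative whenever $a(\xi)<r_0$.
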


\begin{proof}
We wish to use a similar argument to that in the proof of
\cite[Theorem 3.2]{JM4}. Let $\psi$ be an eigenfunction of the
Dirichlet eigenvalue
$\lambda^{\phi}_{1,p}\left(\mathscr{B}_{n}(q^{-},r_0)\right)$,
which, by Lemma \ref{lemma6-5}, is simple, radial and can be chosen
to be nonnegative. Since $\psi\circ r$ vanishes on the boundary
$\partial B(q,r_0)$, where $r(x)=d_{M^{n}}(q,x)$ measures the radial
distance from $q$ to any point $x\in \partial B(q,r_0)$, it is not
hard to know that $\psi\circ r\in W^{1,p}_{0,\phi}(B(q,r_0))$ and
 \begin{eqnarray*}
 \lambda^{\phi}_{1,p}(B(q,r_0))\leq\frac{\int\limits_{B(q,r_0)}|d\psi\circ r|^{p}\cdot e^{-\phi }dv}
 {\int\limits_{B(q,r_0)}|\psi\circ r|^{p}\cdot e^{-\phi }dv}
 \end{eqnarray*}
 by using the
characterization (\ref{chr-2}).

Similar to those calculations in the proof of Theorem \ref{theo-1},
one can get, by using spherical geodesic coordinates centered at $q$
under the integrals, that
 \begin{eqnarray*}
\int\limits_{B(q,r_0)}|d\psi\circ r|^{p}\cdot e^{-\phi
}dv=\int\limits_{\xi\in
S_{q}^{n-1}}\left[\int\limits_{0}^{a(\xi)}|\psi'(s)|^{p}f^{n-1}(s)\theta(s,\xi)e^{-\phi}ds\right]d\sigma
 \end{eqnarray*}
 and
  \begin{eqnarray*}
\int\limits_{B(q,r_0)}|\psi\circ r|^{p}\cdot e^{-\phi
}dv=\int\limits_{\xi\in
S_{q}^{n-1}}\left[\int\limits_{0}^{a(\xi)}|\psi(s)|^{p}f^{n-1}(s)\theta(s,\xi)e^{-\phi}ds\right]d\sigma.
  \end{eqnarray*}
On the other hand, since $f(s)>0$ on $(0,r_0)$ and by Lemma
\ref{lemma6-5}, $\psi(s)|_{(0,r_0)}\geq0$ and
$\psi'(s)|_{(0,r_{0}]}<0$, it follows by a direct calculation that
\begin{eqnarray} \label{PT66-1}
&&\int\limits_{0}^{a(\xi)}|\psi'(s)|^{p}f^{n-1}(s)\theta(s,\xi)e^{-\phi}ds=
-\psi(s)|\psi'(s)|^{p-1}e^{-\phi}f^{n-1}(s)\theta(s,\xi)\Big{|}_{0}^{a(\xi)}+
\nonumber\\
&&
\int\limits_{0}^{a(\xi)}\frac{\psi(s)}{f^{n-1}(s)\theta(s,\xi)e^{-\phi}}\frac{d}{ds}\left[|\psi'(s)|^{p-1}f^{n-1}(s)\theta(s,\xi)e^{-\phi}\frac{d\psi(s)}{ds}\right]
f^{n-1}(s)\theta(s,\xi)e^{-\phi}ds \qquad\quad
\end{eqnarray}
and
\begin{eqnarray} \label{PT66-2}
&&\frac{1}{f^{n-1}(s)\theta(s,\xi)e^{-\phi}}\frac{d}{ds}\left[|\psi'(s)|^{p-1}f^{n-1}(s)\theta(s,\xi)e^{-\phi}\right] \nonumber\\
 && \quad= -|\psi'(s)|^{p-2}\left\{(p-1)\psi''(s)+\left[(n-1)\frac{f'(s)}{f(s)}-\phi'(s)+\frac{d\theta(s,\xi)}{ds}\cdot\frac{1}{\theta(s,\xi)}\right]\psi'(s)\right\}\nonumber\\
 && \quad=
-|\psi'(s)|^{p-2}\bigg{\{}(p-1)\psi''(s)+\left[(n-1)\frac{f'(s)}{f(s)}-\phi'(s)+(n-1)\frac{f(s)}{J(s,\xi)}\left(\frac{J(s,\xi)}{f(s)}\right)'\right]\nonumber\\
 && \qquad\qquad\qquad \qquad\qquad \cdot \psi'(s)\bigg{\}}.
\end{eqnarray}
Together with (\ref{6-8}), one has from (\ref{PT66-2}) that
\begin{eqnarray*}
\frac{1}{f^{n-1}(s)\theta(s,\xi)e^{-\phi}}\frac{d}{ds}\left[|\psi'(s)|^{p-1}f^{n-1}(s)\theta(s,\xi)e^{-\phi}\right]&=&\lambda^{\phi}_{1,p}\left(\mathscr{B}_{n}(q^{-},r_0)\right)|\psi(s)|^{p-2}\psi(s)\\
-|\psi'(s)|^{p-2}
(n-1)\frac{f(s)}{J(s,\xi)}\left[\frac{J(s,\xi)}{f(s)}\right]'\psi'(s)
\\
&\leq&
\lambda_{1,p}^{\phi}\left(\mathscr{B}_{n}(q^{-},r_0)\right)|\psi(s)|^{p-2}\psi(s)
\qquad
 \end{eqnarray*}
by using the fact
$d\theta(s,\xi)/ds=(n-1)\left[J(s,\xi)/f(s)\right]^{n-2}\left[J(s,\xi)/f(s)\right]'\leq0$
shown in Theorem \ref{BishopI}. Substituting the above inequality
into (\ref{PT66-1}) results in
\begin{eqnarray} \label{PT66-3}
 \int\limits_{0}^{a(\xi)}|\psi'(s)|^{p}f^{n-1}(s)\theta(s,\xi)e^{-\phi}ds=-
 \psi(a(\xi))|\psi'(a(\xi))|^{p-1}f^{n-1}(a(\xi))\theta(a(\xi),\xi)e^{-\phi(a(\xi))}\nonumber\\
+
 \int\limits_{0}^{a(\xi)}|\psi(s)|^{p}\lambda^{\phi}_{1,p}\left(\mathscr{B}_{n}(q^{-},r_0)\right)\cdot
 f^{n-1}(s)\theta(s,\xi)e^{-\phi}ds\nonumber\\
 \leq \int\limits_{0}^{a(\xi)}|\psi(s)|^{p}\lambda^{\phi}_{1,p}\left(\mathscr{B}_{n}(q^{-},r_0)\right)\cdot
 f^{n-1}(s)\theta(s,\xi)e^{-\phi}ds, \qquad
 \end{eqnarray}
 where the facts $\psi'(0)=0$ and $\psi(a(\xi))\geq0$ have been
 used. Therefore, it follows from (\ref{PT66-3}) that
  \begin{eqnarray*}
&& \int\limits_{\xi\in
 S_{q}^{n-1}}\left[\int\limits_{0}^{a(\xi)}|\psi'(s)|^{p}f^{n-1}(s)\theta(s,\xi)e^{-\phi}ds\right]d\sigma
 \leq \\
&&\qquad\qquad\qquad \int\limits_{\xi\in
 S_{q}^{n-1}}\left[\int\limits_{0}^{a(\xi)}|\psi(s)|^{p}\lambda_{1,\phi}\left(\mathscr{B}_{n}(q^{-},r_0)\right)\cdot
 f^{n-1}(s)\theta(s,\xi)e^{-\phi}ds\right],
\end{eqnarray*}
which directly implies
$\lambda^{\phi}_{1,p}(B(q,r_0))\leq\lambda^{\phi}_{1,p}\left(\mathscr{B}_{n}(q^{-},r_0)\right)$.
When equality holds, one has $a(\xi)=r_{0}$ for almost all $\xi\in
S^{n-1}_{q}\equiv\mathbb{S}^{n-1}$. Hence, $a(\xi)\equiv r_{0}$ for
all $\xi$, and from which we have $J(s,\xi)=f(s)$, and then
$\mathrm{vol}(B(q,r_0))=\mathrm{vol}(\mathscr{B}_n(q^{-},r_0))$.
This implies from Theorem \ref{BishopI} that $B(q,r_0)$ is isometric
to $\mathscr{B}_n(q^{-},r_0)$.
\end{proof}

\begin{remark}
\rm{ It is easy to see that if $p=2$, then the weighted
$p$-Laplacian $\Delta_{p,\phi}$ would degenerate into the
Witten-Laplacian $\Delta_{\phi}$ directly, and correspondingly,
Theorem \ref{theo-3} becomes Theorem \ref{theo-1} exactly. However,
if one checks the end of Section \ref{Sect2} carefully, he or she
would find that the spectral structure of the eigenvalue problem
(\ref{eigenp-wpl}) is much complicated than the one of the
eigenvalue problem (\ref{eigenp-w1}) --- the operator
$\Delta_{p,\phi}$ in (\ref{eigenp-wpl}) has a discrete spectrum and
it might also has an essential spectrum, which deeply depends on the
geometry and topology of the domain $\Omega$, but the operator
$\Delta_{\phi}$ in (\ref{eigenp-w1}) definitely only has a discrete
spectrum and each eigenvalue in this discrete spectrum can be
characterized clearly. }
\end{remark}

\begin{remark}
\rm{ Denote by $\lambda_{k,p}(\cdot)$ the $k$-th Dirichlet
eigenvalue of the $p$-Laplacian $\Delta_{p}$ on a given geometric
object. Clearly, if $\phi=const.$, then (\ref{ECT-7}) in Theorem
\ref{theo-3} would reduce to
\begin{eqnarray} \label{ECT-8}
\lambda_{1,p}(B(q,r_0))\leq\lambda_{1,p}\left(\mathscr{B}_{n}(q^{-},r_0)\right),
\end{eqnarray}
and the equality holds if and only if $B(q,r_0)$ is isometric to
$\mathscr{B}_{n}(q^{-},r_0)$.  This is exactly the assertions in
\cite[Theorem 3.2]{JM4}. If furthermore the curvature assumption was
strengthened to be $\mathrm{Ric}(M^n)\geq(n-1)K$ for some constant
$K$, then the eigenvalue comparison (\ref{ECT-8}) would reduce to
\begin{eqnarray} \label{ECT-9}
\lambda_{1,p}(B(q,r_0))\leq\lambda_{1,p}\left(\mathcal{B}_{n}(K,r_0)\right),
\end{eqnarray}
 and the equality holds if and only if $B(q,r_0)$ is isometric to
$\mathcal{B}_{n}(K,r_0)$. This eigenvalue comparison (\ref{ECT-9})
and related rigidity result have already been obtained separately by
A. M. Matei \cite{MAM} and H. Takeuchi \cite{TH}. }
\end{remark}

As we know, for an arbitrary point $q\in\mathbb{S}^{n-1}$, its cut
locus is a single point set and exactly consists of the antipodal
point $\widehat{q}$ of $q$. For a sufficiently small positive
constant $\epsilon$, the geodesic ball $B(q,\pi-\epsilon)$ tends to
$\mathbb{S}^{n-1}\setminus\{\widehat{q}\}$ as
$\epsilon\rightarrow0^{+}$, which leads to a fact that as
$\epsilon\rightarrow0^{+}$, the first Dirichlet eigenvalue
$\lambda_{1}(B(q,\pi-\epsilon))$ would tend to the first closed
eigenvalue of the Laplacian on $\mathbb{S}^{n-1}$, that is,
\begin{eqnarray*}
\lim\limits_{\epsilon\rightarrow0^{+}}\lambda_{1}(B(q,\pi-\epsilon))=0.
\end{eqnarray*}
This spectral asymptotical property on the unit Euclidean
$(n-1)$-sphere $\mathbb{S}^{n-1}$ has been extended to the first
Dirichlet eigenvalue of the Laplacian on ``closed" spherically
symmetric manifolds (see \cite[lemma2.5]{fmi}), and later to the
first Dirichlet eigenvalue of the $p$-Laplacian ($1<p<\infty$) on
``closed" spherically symmetric manifolds (see \cite[Lemma
2.2]{JM4}). It is natural to ask whether or not these extensions can
be further improved to the case of weighted $p$-Laplacian. In fact,
by using the same cut-off continuous functions as those two in the
proof of \cite[Lemma 2.2]{JM4}, one has:

\begin{lemma} \label{lemma6-9}
Assume that $M^{\ast}$ is a generalized space form
$[0,l)\times_{f}\mathbb{S}^{n-1}$ (with $q^{\ast}$ as its base
point) and $\phi$ is a smooth real-valued function defined on
$M^{\ast}$, where $f\in C^{2}([0,l))$ and $C^3$ at $s=0$,
$f(0)=f''(0)=0$, $f'(0)=1$, closing at $s=l$, i.e. $f(l)=0$. We
have \\
(I) in case $n=2$, if for some $\varepsilon>0$, $f\in
C^{1}([0,l+\varepsilon))$, then $\lim_{r\rightarrow
l^{-}}\lambda_{1,p}^{\phi}\left(\mathscr{B}_{n}(q^{\ast},r)\right)=0$
with $1<p\leq2$;\\
 (II) in case $n\geq3$, if for some $\varepsilon>0$, $f\in
C^{2}([0,l+\varepsilon))$, then $\lim_{r\rightarrow
l^{-}}\lambda_{1,p}^{\phi}\left(\mathscr{B}_{n}(q^{\ast},r)\right)=0$
with $1<p<3$.
\end{lemma}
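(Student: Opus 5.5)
The plan is to test the Rayleigh quotient (\ref{chr-2}) on $\mathscr{B}_{n}(q^{\ast},r)$ with the same continuous radial cut-off functions used in \cite[Lemma 2.2]{JM4}. The point is that $\phi$ is smooth on the compact closure of $M^{\ast}$, so the weight $e^{-\phi}$ is pinched between two positive constants $c_{1}\leq e^{-\phi}\leq c_{2}$. This two-sided bound is the only place the weight enters. It yields
\begin{eqnarray*}
\frac{\int|\nabla u|^{p}e^{-\phi}dv}{\int|u|^{p}e^{-\phi}dv}\leq\frac{c_{2}}{c_{1}}\cdot\frac{\int|\nabla u|^{p}dv}{\int|u|^{p}dv},
\end{eqnarray*}
hence $\lambda^{\phi}_{1,p}(\mathscr{B}_{n}(q^{\ast},r))\leq\frac{c_{2}}{c_{1}}\,\lambda_{1,p}(\mathscr{B}_{n}(q^{\ast},r))$ for every admissible test function. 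Monotonicity of Dirichlet eigenvalues under inclusion is not even needed.

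The statement therefore reduces to showing that the unweighted quotient can be made arbitrarily small for a suitable test function supported in $\mathscr{B}_{n}(q^{\ast},r)$ as $r\to l^{-}$. I would not simply cite \cite[Lemma 2.2]{JM4} for this, but repeat its construction to fix the exponent ranges. Take $u=1$ on $[0,r-\delta]$ and let $u$ decay to $0$ at $s=r$: linearly when $n=2$, and with a logarithmic-type profile in $s$ when $n\geq3$. The volume element is $w_{n}f^{n-1}(s)\,ds$. Near $s=l$, the extension hypotheses ($f\in C^{1}$, resp.\ $C^{2}$, on $[0,l+\varepsilon)$) together with $f(l)=0$ give $f(s)\approx|f'(l)|(l-s)$. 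The corresponding condition $f'(l)=-1$, needed for smooth closing, is not required here; only $f(s)=O(l-s)$ is used.

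The main computation is then
\begin{eqnarray*}
\int_{r-\delta}^{r}|u'|^{p}f^{n-1}ds\lesssim \delta^{-p}\cdot\delta^{n}\ \ (\text{or } \delta^{-p}(l-r+\delta)^{n-1}\delta),
\end{eqnarray*}
choosing $\delta$ comparable to $l-r$. For $n=2$ this is $\sim\delta^{2-p}$, which tends to $0$ for $p<2$ and stays bounded for $p=2$. In the case $p=2$ one needs the logarithmic cut-off (capacity of a point in dimension two), with $u$ built on a log scale over $[r-\sqrt{\cdot},r]$. For $n\geq3$ one checks the analogous exponent condition; the stated range $p<3$ is what the $C^{2}$ extension allows. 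Meanwhile $\int|u|^{p}f^{n-1}ds\to w_{n}^{-1}\mathrm{vol}(M^{\ast})>0$. So the unweighted quotient tends to $0$, and with it the weighted eigenvalue, which is nonnegative.

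The main obstacle is this boundary-layer estimate: matching the vanishing order of $f$ at $l$ with the cut-off profile to obtain exactly the exponent ranges $1<p\leq2$ and $1<p<3$. In particular the endpoint $p=2$ for $n=2$ requires the logarithmic test function. I would follow the two cut-off functions of \cite[Lemma 2.2]{JM4} verbatim there. Everything else transfers directly thanks to the comparability of the weight with constants.
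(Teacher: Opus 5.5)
Your proposal is correct and is essentially the paper's argument: compare $e^{-\phi}$ with constants so that the weighted Rayleigh quotient is controlled by the unweighted one, then use the cut-off functions of \cite[Lemma 2.2]{JM4}. Incidentally, your linear cut-off estimate $\delta^{n-p}$ already handles every $1<p<3$ for $n\geq3$ using only $f(s)=O(l-s)$, so the $C^{2}$ hypothesis plays no real role, and the logarithmic profile is needed only at $p=n=2$.

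The one place you are more careful than the paper is in demanding $c_{1}\leq e^{-\phi}\leq c_{2}$ uniformly as $r\to l^{-}$, i.e.\ $\phi$ bounded up to the closing point. This is genuinely needed: boundedness on each fixed ball $\mathscr{B}_{n}(q^{\ast},r)$, which is how the paper phrases it, does not suffice. For example, on the round $\mathbb{S}^{2}$, a radial $\phi$ with $e^{-\phi}=(\pi-s)^{-a}$, $a>0$, near the antipode yields a weighted Hardy inequality that keeps $\lambda^{\phi}_{1,2}(\mathscr{B}_{2}(q^{\ast},r))$ bounded away from $0$.
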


\noindent The reason why the same cut-off continuous functions still
work in the weighted case is that $e^{-\phi}$ is always bounded on
the geodesic ball $\mathscr{B}_{n}(q^{\ast},r)\subseteq M^{\ast}$,
which leads to the result that all integrals appeared in the proof
of \cite[Lemma 2.2]{JM4} (i.e. corresponding to the non-weighted
case) have no essential difference with those would appear in the
weighted case.

It is interesting that nearly all the spectral results in this paper
need radial \textbf{Property 1} for the potential function, but this
assumption is not required to get the spectral asymptotical result
in Lemma \ref{lemma6-9}. This invokes us to consider the following
question:

\vspace{3mm}

\noindent \textbf{Open problem}. \emph{What kind of assumptions
should be imposed on the potential function $\phi$ for transplanting
classical spectral results of the Laplacian or the $p$-Laplacian to
the weighted case? What is the weakest assumption in a transplanting
process? }

\section{Appendix C}
\renewcommand{\thesection}{\arabic{section}}
\renewcommand{\theequation}{\thesection.\arabic{equation}}
\setcounter{equation}{0}

For a given Riemannian $n$-manifold $M^{n}$ and a point $q\in
M^{n}$, similar to the argument for the $p$-Laplacian in our
previous work \cite[Section 1]{DM} or \cite[Section 1]{MTZ}, it is
not hard to get that the limit
$\lambda_{1,p}^{\phi}(M^{n}):=\lim\limits_{r_{0}\rightarrow\infty}\lambda_{1,p}^{\phi}(B(q,r_0))$
exists and is independent of the choice of $q$. Based on this fact,
it is also not hard to know that the fundamental tone of the
weighted $p$-Laplacian $\Delta_{p,\phi}$ on $\Omega$ can be
 well-defined as follows
\begin{eqnarray} \label{chr-3}
\left(\lambda_{1,p}^{\phi}(\Omega)\right)^{\ast}:=\inf\left\{\frac{\int_{\Omega}|u|^{p}e^{-\phi}dv}{\int_{\Omega}|\nabla
u|^{p}e^{-\phi}dv}=\frac{\int_{\Omega}|u|^{p}d\mu}{\int_{\Omega}|\nabla
u|^{p}d\mu}\Big{|}u\neq0,~u\in W^{1,p}_{0,\phi}(\Omega)\right\},
\end{eqnarray}
with notations having the same meanings as before. If $\Omega$ is
unbounded, then $(\lambda_{1,p}^{\phi}(\Omega))^{\ast}$ coincides
with the infimum of the spectrum $\Sigma\subseteq[0,\infty)$ of the
unique self-adjoint extension of the weighted $p$-Laplacian
$\Delta_{p,\phi}$ acting on $C^{\infty}_{0}(\Omega)$, which is also
denoted by $\Delta_{p,\phi}$. If $\Omega$ has compact closure and
boundary $\partial\Omega$, from the characterization (\ref{chr-2})
one knows that $(\lambda_{1,p}^{\phi}(\Omega))^{\ast}$ equals the
first Dirichlet eigenvalue $\lambda_{1,p}^{\phi}(\Omega)$ of the
weighted $p$-Laplacian, i.e.
$(\lambda_{1,p}^{\phi}(\Omega))^{\ast}=\lambda_{1,p}^{\phi}(\Omega)$.

First, we need the following fact, which  can be seen as a nonlinear
attempt of Lemma \ref{lemma6-3}.

\begin{lemma} \label{lemma7-1}
Let $\Omega$ be a domain in a Riemannian manifold with density
$e^{-\phi}$, where $\phi$ is a smooth real-valued function defined
on this manifold. Assume that $h\in C^{1+\alpha}(\Omega)\cap
C^{0}(\bar{\Omega})$ with $\Delta_{p,\phi}h\in C^{0}(\Omega)$ and
$h|_{\Omega}>0$, where $\alpha\in(0,1)$. Then the fundamental tone
$(\lambda_{1,p}^{\phi}(\Omega))^{\ast}$ of the weighted
$p$-Laplacian ($1<p<\infty$) satisfies
 \begin{eqnarray} \label{PL-7-1}
\left(\lambda_{1,p}^{\phi}(\Omega)\right)^{\ast}\geq\inf\limits_{\Omega}\left\{-\frac{\Delta_{p,\phi}h}{h^{p-1}}\right\}.
 \end{eqnarray}
Moreover, if $\Omega$ is bounded, then
$(\lambda_{1,p}^{\phi}(\Omega))^{\ast}$ becomes the lowest Dirichlet
eigenvalue $\lambda_{1,p}^{\phi}(\Omega)$ of the weighted
$p$-Laplacian on $\Omega$, and the equality in (\ref{PL-7-1}) holds
if and only if $h=\beta u$ for some constant $\beta>0$, where $u\in
W^{1,p}_{0,\phi}(\Omega)$ is a nonnegative minimizer of the Rayleigh
quotient.
\end{lemma}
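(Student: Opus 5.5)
The approach is a weighted Picone-identity (Allegretto--Huang) argument for the weighted $p$-Laplacian; the weighted measure only enters through integration by parts. Note that the displayed definition (\ref{chr-3}) inverts the Rayleigh quotient. I will use the intended definition: the infimum of $\int_\Omega|\nabla u|^p d\mu\big/\int_\Omega|u|^p d\mu$ over $u\in C^\infty_0(\Omega)$ (equivalently $W^{1,p}_{0,\phi}(\Omega)$), $u\neq0$. By density it suffices to show that for every $u\in C^\infty_0(\Omega)$,
\begin{eqnarray*}
\int_\Omega|\nabla u|^p d\mu\geq \Lambda\int_\Omega |u|^p d\mu,\qquad \Lambda:=\inf_\Omega\left\{-\frac{\Delta_{p,\phi}h}{h^{p-1}}\right\}.
\end{eqnarray*}
We may assume $\Lambda>-\infty$.

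\emph{Picone identity.} Fix $u\in C^\infty_0(\Omega)$, $u\geq0$; replacing $u$ by $|u|$ is harmless since $|\nabla|u||=|\nabla u|$ a.e. Because $h>0$ on the compact set $\mathrm{supp}\,u$, the vector field $X:=\frac{u^p}{h^{p-1}}|\nabla h|^{p-2}\nabla h$ is $C^{\alpha}$, compactly supported in $\Omega$, and its weighted divergence $e^{\phi}\mathrm{div}(e^{-\phi}X)$ exists in the weak sense. This uses $\Delta_{p,\phi}h\in C^0(\Omega)$, understood weakly. We then have the pointwise identity
\begin{eqnarray*}
|\nabla u|^p-\left\langle\nabla\Big(\frac{u^p}{h^{p-1}}\Big),|\nabla h|^{p-2}\nabla h\right\rangle
=|\nabla u|^p-p\frac{u^{p-1}}{h^{p-1}}|\nabla h|^{p-2}\langle\nabla u,\nabla h\rangle+(p-1)\frac{u^p}{h^p}|\nabla h|^p\;\geq 0.
\end{eqnarray*}
Nonnegativity follows from Cauchy--Schwarz and Young's inequality $ab^{p-1}\leq \frac{a^p}{p}+\frac{(p-1)b^p}{p}$, applied with $a=|\nabla u|$ and $b=u|\nabla h|/h$.

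\emph{Integration.} Integrate against $d\mu=e^{-\phi}dv$ and apply the weak definition of $\Delta_{p,\phi}h$ with test function $u^p/h^{p-1}$. This test function is $C^1$ with compact support, or at least $W^{1,p}$ with compact support, which is enough. We obtain
\begin{eqnarray*}
0\leq\int_\Omega|\nabla u|^p d\mu+\int_\Omega \frac{u^p}{h^{p-1}}\Delta_{p,\phi}h\,d\mu\leq\int_\Omega|\nabla u|^pd\mu-\Lambda\int_\Omega u^pd\mu .
\end{eqnarray*}
Taking the infimum over $u$ gives (\ref{PL-7-1}).

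\emph{Bounded case and equality.} When $\Omega$ is bounded, $(\lambda_{1,p}^{\phi}(\Omega))^{\ast}=\lambda_{1,p}^{\phi}(\Omega)$ by (\ref{chr-2}), as noted before the lemma. If $h=\beta u$ with $u$ the positive first eigenfunction, then $-\Delta_{p,\phi}h/h^{p-1}\equiv\lambda_{1,p}^{\phi}(\Omega)$, so equality holds. Conversely, suppose equality holds. Apply the chain of inequalities above to the first eigenfunction $u$, approximated by $C^\infty_0$ functions. Both inequalities must be equalities in the limit, so the Picone integrand vanishes a.e. Equality in Young and Cauchy--Schwarz forces $\nabla u=\frac{u}{h}\nabla h$, i.e. $\nabla(u/h)=0$ on the connected domain, hence $h=\beta u$.

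\emph{Main obstacle.} The main obstacle is justifying this limiting step. The eigenfunction $u$ is not compactly supported, and $u^p/h^{p-1}$ may fail to be an admissible test function near $\partial\Omega$, since $h$ is only continuous up to the boundary. I would handle this by working on an exhaustion $\Omega_k\Subset\Omega$ with cut-offs $u\eta_k$ and applying Fatou's lemma to the nonnegative Picone integrand. The error terms would be controlled by $\int|\nabla\eta_k|^p u^p\,d\mu\to0$, using $u\in W^{1,p}_{0,\phi}$ together with the boundedness of $e^{-\phi}$ on the bounded domain. The relevant regularity facts would be cited from \cite{AA, LP}, adapted to the weight.
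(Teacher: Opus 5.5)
Your proposal is correct and follows the paper's own route. Both arguments integrate the Allegretto--Huang form of Picone's identity against $d\mu=e^{-\phi}dv$ with the trial function $u^{p}/h^{p-1}$, and both get the equality case from the vanishing of the nonnegative Picone integrand, split into its Young and Cauchy--Schwarz parts; you are also right that (\ref{chr-3}) as printed inverts the Rayleigh quotient.

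You are right that the limiting step needs care, since the paper simply integrates by parts against the non-compactly supported $u^{p}/h^{p-1}$. However, your cut-offs are unnecessary, and $\int_{\Omega}|\nabla\eta_k|^{p}u^{p}\,d\mu\to0$ is not automatic from $u\in W^{1,p}_{0,\phi}(\Omega)$ alone. Instead, take $\Psi_n\in C^{\infty}_{0}(\Omega)$ with $\Psi_n\to u$, and let $\Lambda$ be the right-hand side of (\ref{PL-7-1}). Your first step bounds the $\mu$-integral of the Picone integrand of $(|\Psi_n|,h)$ by $\int_{\Omega}|\nabla\Psi_n|^{p}d\mu-\Lambda\int_{\Omega}|\Psi_n|^{p}d\mu$, which tends to $0$ when equality holds. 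Fatou's lemma along an a.e.\ convergent subsequence then shows that the Picone integrand of $(u,h)$ vanishes a.e.
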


\begin{proof}
 Picone's classical inequality \cite[p. 18]{PM} states:
\begin{itemize}
\item If $u: \Omega\rightarrow\mathbb{R}$ and $\nu:
\Omega\rightarrow\mathbb{R}$ are two differentiable functions such
that $u\geq0$ and $\nu>0$, then in each connected component of
$\Omega$ one has
 \begin{eqnarray} \label{PL-7-2}
|\nabla u|^{2}+\frac{u^2}{\nu^2}|\nabla
\nu|^{2}-\frac{2u}{\nu}\langle\nabla u,\nabla \nu\rangle=|\nabla
u|^{2}-\left\langle\nabla
\nu,\nabla\left(\frac{u^2}{\nu}\right)\right\rangle\geq0.
 \end{eqnarray}
 Moreover, the equality holds if and only if $u$ is proportional to
 $\nu$.
\end{itemize}
It is not hard to see that the lower bound estimate
\begin{eqnarray*}
 \inf_{\Omega}\left(\frac{\Delta_{\phi}h}{h}\right)\leq-\lambda_{1,\phi}(\Omega)
 \end{eqnarray*}
in Lemma \ref{lemma6-3} can be obtained from (\ref{PL-7-2}) by
taking $u$ as a nonnegative eigenfunction of the first Dirichlet
eigenvalue $\lambda_{1,\phi}(\Omega)$ of the Witten-Laplacian,
$\nu=h$, and then integrating by parts the right hand side of the
equation (\ref{PL-7-2}) over $\Omega$. This observation was inspired
by the argument in the proof of \cite[Theorem 1.2]{SPHC} with
respect to the weighted volume density $d\mu=e^{-\phi}dv$. It is
natural to try to improve this observation to the nonlinear case.

In \cite{AH}, Picone's result (\ref{PL-7-2}) was extended to the
following:
\begin{eqnarray} \label{PL-7-3}
&&|\nabla u|^{p}+(p-1)\frac{u^p}{\nu^p}|\nabla
\nu|^{p}-p\frac{u^{p-1}}{\nu^{p-1}}|\nabla \nu|^{p-2}\langle\nabla
u,\nabla \nu\rangle=\nonumber\\
&&\qquad\qquad\qquad\qquad\qquad\qquad  |\nabla u|^{p}-|\nabla
\nu|^{p-2}\left\langle\nabla
\nu,\nabla\left(\frac{u^p}{\nu^{p-1}}\right)\right\rangle\geq0,
\end{eqnarray}
with $1<p<\infty$, where the equality in (\ref{PL-7-3}) holds if and
only if $u$ is proportional to
 $\nu$ in each connected component of
$\Omega$. It is easy to see that if $p=2$ in (\ref{PL-7-3}), then it
would degenerate into (\ref{PL-7-2}) exactly.

Let $\Psi\in C^{\infty}_{0}(\Omega)$ with $\Psi\geq0$, and choose
$h\in C^{1+\alpha}(\Omega)$ with $\Delta_{p,\phi}h\in C^{0}(\Omega)$
such that $h>0$ in $\Omega$. Since $\Psi$ has a compact support
$\mathrm{supp}(\Psi)$ in $\Omega$ and $h$ is continuous and strictly
positive in $\Omega$, there exists some constant $c>0$ such that
$\Psi\geq c$ on $\mathrm{supp}(\Psi)\subset\Omega$, and
$\Psi^{p}/h^{p-1}$ is well-defined and is contained in
$W^{1,p}_{0,\phi}(\Omega)$. So, $\Psi^{p}/h^{p-1}$ can be used as a
trial function in (\ref{chr-3}). By choosing $u=\Psi$, $\nu=h$ in
(\ref{PL-7-3}), and integrating over $\Omega$ with respect to the
weighted volume density $d\mu=e^{-\phi}dv$, we can obtain
 \begin{eqnarray}  \label{PL-7-4}
0&\leq&\int_{\Omega}\left(|\nabla\Psi|^{p}+(p-1)\frac{\Psi^p}{h^p}|\nabla
h|^{p}-p\frac{\Psi^{p-1}}{h^{p-1}}|\nabla h|^{p-2}\langle\nabla
\Psi,\nabla h\rangle\right)d\mu\nonumber\\
&=&\int_{\Omega}\left(|\nabla\Psi|^{p}+(p-1)\frac{\Psi^p}{h^p}|\nabla
h|^{p}-p\frac{\Psi^{p-1}}{h^{p-1}}|\nabla h|^{p-2}\langle\nabla
\Psi,\nabla h\rangle\right)e^{-\phi}dv\nonumber\\
&=&\int_{\Omega}\left[|\nabla \Psi|^{p}-|\nabla
h|^{p-2}\left\langle\nabla
h,\nabla\left(\frac{\Psi^p}{h^{p-1}}\right)\right\rangle\right]e^{-\phi}dv\nonumber\\
&=&\int_{\Omega}|\nabla\Psi|^{p}e^{-\phi}dv-\int_{\Omega}\left\langle|\nabla
h|^{p-2}\nabla
h,\nabla\left(\frac{\Psi^p}{h^{p-1}}\right)\right\rangle e^{-\phi}dv\nonumber\\
&=&\int_{\Omega}|\nabla\Psi|^{p}e^{-\phi}dv+\int_{\Omega}\mathrm{div}\left(e^{-\phi}|\nabla
h|^{p-2}\nabla h\right)\cdot\frac{\Psi^p}{h^{p-1}}dv\nonumber\\
&=&\int_{\Omega}|\nabla\Psi|^{p}e^{-\phi}dv+\int_{\Omega}(\Delta_{p,\phi}h)\cdot\frac{\Psi^p}{h^{p-1}}e^{-\phi}dv\nonumber\\
&\leq&\int_{\Omega}|\nabla\Psi|^{p}e^{-\phi}dv-\inf\limits_{\Omega}\left\{-\frac{\Delta_{p,\phi}h}{h^{p-1}}\right\}\int_{\Omega}\Psi^{p}e^{-\phi}dv,
 \end{eqnarray}
which implies
 \begin{eqnarray} \label{PL-7-5}
\inf\limits_{\Omega}\left\{-\frac{\Delta_{p,\phi}h}{h^{p-1}}\right\}\leq\frac{\int_{\Omega}|\nabla\Psi|^{p}e^{-\phi}dv}{\int_{\Omega}\Psi^{p}e^{-\phi}dv}=
\frac{\int_{\Omega}|\nabla\Psi|^{p}d\mu}{\int_{\Omega}\Psi^{p}d\mu}
 \end{eqnarray}
with  $\Psi\in C^{\infty}_{0}(\Omega)$ and $\Psi\geq0$. Taking the
infimum for the Rayleigh quotient
$$\frac{\int_{\Omega}|\nabla\Psi|^{p}e^{-\phi}dv}{\int_{\Omega}\Psi^{p}e^{-\phi}dv}$$
over $\Psi\in C^{\infty}_{0}(\Omega)\setminus\{0\}$ and $\Psi\geq0$,
together with the fact that $C^{\infty}_{0}(\Omega)$ is dense in the
Sobolev space $W^{1,p}_{0,\phi}$, we can get from (\ref{chr-3}) and
(\ref{PL-7-5}) that the estimate (\ref{PL-7-1}) in Lemma
\ref{lemma7-1} holds.

Assume that $\Omega$ is bounded, then one has first that
$(\lambda_{1,p}^{\phi}(\Omega))^{\ast}=\lambda_{1,p}^{\phi}(\Omega)$.
Correspondingly, the estimate (\ref{PL-7-1}) becomes
 \begin{eqnarray} \label{PL-7-6}
\lambda_{1,p}^{\phi}(\Omega)\geq\inf\limits_{\Omega}\left\{-\frac{\Delta_{p,\phi}h}{h^{p-1}}\right\}.
 \end{eqnarray}
 Now, we wish to give a rigidity characterization for the equality
 in (\ref{PL-7-6}). To get this purpose, we shall use a similar
 argument to that in the proof of
\cite[Theorem 1.2]{SPHC}.

We first show the validity of the sufficient condition. If $h=\beta
u$ for some constant $\beta>0$, where  $u\in
W^{1,p}_{0,\phi}(\Omega)$ is a nonnegative minimizer of the Rayleigh
quotient, that is,
\begin{eqnarray*}
\lambda_{1,p}^{\phi}(\Omega)=\frac{\int_{\Omega}|\nabla
u|^{p}e^{-\phi}dv}{\int_{\Omega}|u|^{p}e^{-\phi}dv}
\end{eqnarray*}
from the characterization (\ref{chr-2}), then one has
 \begin{eqnarray*}
\Delta_{p,\phi}(\beta u)=e^{\phi}\mathrm{div}(e^{-\phi}|\nabla(\beta
u)|^{p-2}\nabla(\beta
u))=\beta^{p-1}e^{\phi}\mathrm{div}(e^{-\phi}|\nabla u|^{p-2}\nabla
u)=\beta^{p-1}\Delta_{p,\phi}u,
 \end{eqnarray*}
 and then in the sense of weak derivatives,
  \begin{eqnarray*}
-\Delta_{p,\phi}h=-\Delta_{p,\phi}(\beta
u)=-\beta^{p-1}\Delta_{p,\phi}u=\beta^{p-1}\lambda_{1,p}^{\phi}(\Omega)u^{p-1}=\lambda_{1,p}^{\phi}(\Omega)h^{p-1}
 \end{eqnarray*}
holds in $\Omega$. This implies
 \begin{eqnarray*}
\lambda_{1,p}^{\phi}(\Omega)=-\frac{\Delta_{p,\phi}h}{h^{p-1}}=\inf\limits_{\Omega}\left\{-\frac{\Delta_{p,\phi}h}{h^{p-1}}\right\}
 \end{eqnarray*}
 with $h=\beta u$ for some constant $\beta>0$.

At the end, we show the validity of the necessary condition. Since
$\Omega$ is bounded, the infimum
$$\inf\left\{\frac{\int_{\Omega}|u|^{p}e^{-\phi}dv}{\int_{\Omega}|\nabla
u|^{p}e^{-\phi}dv}\Big{|}u\neq0,~u\in
W^{1,p}_{0,\phi}(\Omega)\right\}$$ can be achieved by some
nonnegative function $u\in W^{1,p}_{0,\phi}(\Omega)$ which is a weak
solution to the eigenvalue problem (\ref{eigenp-wpl}). In fact, $u$
is an eigenfunction of the first Dirichlet eigenvalue
$\lambda_{1,p}^{\phi}(\Omega)$, and it does not change sign in
$\Omega$ (see the end of Section \ref{Sect2}). Let
$\{\Psi_{n}\}_{n=1}^{\infty}$ be a sequence in the space
$C^{\infty}_{0}(\Omega)$ such that $\Psi_{n}\geq0$ and
$\Psi_{n}\rightarrow u$ under the norm $\|\cdot\|^{\phi}_{1,p}$ as
$n\rightarrow\infty$. Choosing $u=\Psi_{n}$, $\nu=h$ in
(\ref{PL-7-3}), and using a similar calculating process as that in
(\ref{PL-7-4}), together with the fact $\Psi_{n}^{p}/h^{p-1}\in
W^{1,p}_{0,\phi}(\Omega)$, one has
\begin{eqnarray*}
\int_{\Omega}|\nabla\Psi_{n}|^{p}e^{-\phi}dv-\int_{\Omega}\left(-\frac{\Delta_{p,\phi}h}{h^{p-1}}\right)\Psi_{n}^{p}\cdot
e^{-\phi}dv\geq0.
\end{eqnarray*}
Letting $n\rightarrow\infty$ in the above inequality yields
\begin{eqnarray}  \label{PL-7-7}
0\leq\int_{\Omega}|\nabla
u|^{p}e^{-\phi}dv-\int_{\Omega}\left(-\frac{\Delta_{p,\phi}h}{h^{p-1}}\right)u^{p}
e^{-\phi}dv=\int_{\Omega}|\nabla
u|^{p}d\mu-\int_{\Omega}\left(-\frac{\Delta_{p,\phi}h}{h^{p-1}}\right)|u|^{p}d\mu.
\qquad
\end{eqnarray}
It follows from (\ref{PL-7-7}) that
 \begin{eqnarray} \label{PL-7-8}
\inf\limits_{\Omega}\left\{-\frac{\Delta_{p,\phi}h}{h^{p-1}}\right\}\leq\frac{\int_{\Omega}|\nabla
u|^{p}d\mu}{\int_{\Omega}|u|^{p}d\mu}.
 \end{eqnarray}
Since $u\in W^{1,p}_{0,\phi}(\Omega)$ is a weak solution to the
eigenvalue problem (\ref{eigenp-wpl}), we can obtain from
(\ref{chr-2}) that
\begin{eqnarray*}
\frac{\int_{\Omega}|u|^{p}e^{-\phi}dv}{\int_{\Omega}|\nabla
u|^{p}e^{-\phi}dv}=\frac{\int_{\Omega}|\nabla
u|^{p}d\mu}{\int_{\Omega}|u|^{p}d\mu}=\lambda_{1,p}^{\phi}(\Omega).
 \end{eqnarray*}
Assume that the equality in (\ref{PL-7-1}) holds, i.e. the necessary
condition holds, which means
\begin{eqnarray*}
\lambda_{1,p}^{\phi}(\Omega)=\inf\limits_{\Omega}\left\{-\frac{\Delta_{p,\phi}h}{h^{p-1}}\right\}.
\end{eqnarray*}
Then one gets that the equalities in (\ref{PL-7-7}) and
 (\ref{PL-7-8}) hold simultaneously. Therefore, it follows from the
 fact
 \begin{eqnarray*}
\int_{\Omega}\left(-\frac{\Delta_{p,\phi}h}{h^{p-1}}\right)|u|^{p}e^{-\phi}dv=\int_{\Omega}|\nabla
h|^{p-2}\left\langle\nabla
h,\nabla\left(\frac{u^p}{h^{p-1}}\right)\right\rangle e^{-\phi}dv
 \end{eqnarray*}
that the nonnegative
 integrand
 \begin{eqnarray*}
|\nabla u|^{p}-|\nabla h|^{p-2}\left\langle\nabla
h,\nabla\left(\frac{u^p}{h^{p-1}}\right)\right\rangle
 \end{eqnarray*}
 should vanish a.e. in $\Omega$, that is to say,
 \begin{eqnarray} \label{PL-7-9}
|\nabla u|^{p}+(p-1)\frac{u^p}{h^p}|\nabla
h|^{p}-p\frac{u^{p-1}}{h^{p-1}}|\nabla h|^{p-2}\langle\nabla
u,\nabla h\rangle=0
 \end{eqnarray}
holds a.e. in $\Omega$. Set
\begin{eqnarray*}
\mathcal {X}(x):=\left[|\nabla
u|^{p}-p\frac{u^{p-1}}{h^{p-1}}|\nabla u||\nabla h|^{p-2}|\nabla
h|+(p-1)\frac{u^p}{h^p}|\nabla h|^{p}\right](x), \qquad x\in\Omega
\end{eqnarray*}
and
 \begin{eqnarray*}
\mathcal {Y}(x):=\left(p\frac{u^{p-1}}{h^{p-1}}|\nabla u||\nabla
h|^{p-2}|\nabla h|-p\frac{u^{p-1}}{h^{p-1}}|\nabla
h|^{p-2}\langle\nabla u,\nabla h\rangle\right)(x), \qquad
x\in\Omega.
 \end{eqnarray*}
Therefore, the equation (\ref{PL-7-9}) can be rewritten as
\begin{eqnarray} \label{PL-7-10}
\mathcal {X}(x)+\mathcal {Y}(x)=0 \quad\mathrm{a.e.~in}~\Omega.
\end{eqnarray}
Using Young's inequality, one has
\begin{eqnarray*}
|\nabla u|^{p}+(p-1)\frac{u^p}{h^p}|\nabla
h|^{p}&=&p\left[\frac{|\nabla u|^{p}}{p}+\frac{\left((u|\nabla
h|/h)^{p-1}\right)^{p/(p-1)}}{\frac{p}{p-1}}\right]\nonumber\\
&\geq&p|\nabla u|\left(\frac{u|\nabla h|}{h}\right)^{p-1},
\end{eqnarray*}
which implies $\mathcal{X}\geq0$ in $\Omega$. By a direct
calculation, one can get
\begin{eqnarray*}
p\frac{u^{p-1}}{h^{p-1}}|\nabla u||\nabla h|^{p-2}|\nabla
h|-p\frac{u^{p-1}}{h^{p-1}}|\nabla h|^{p-2}\langle\nabla u,\nabla
h\rangle&=&p\frac{u^{p-1}}{h^{p-1}}|\nabla h|^{p-2}\left(|\nabla
u||\nabla h|-\langle\nabla u,\nabla h\rangle\right) \nonumber\\
&\geq&0,
\end{eqnarray*}
where the last inequality holds because of the Cauchy-Schwarz
inequality. So, $\mathcal{Y}\geq0$ in $\Omega$. Together with
(\ref{PL-7-10}), one easily knows that
\begin{eqnarray*}
\mathcal{X}(x)=\mathcal{Y}(x)=0  \quad\mathrm{a.e.~in}~\Omega.
\end{eqnarray*}
From $\mathcal{Y}=0$, one has
\begin{eqnarray*}
|\nabla u||\nabla h|=\langle\nabla u,\nabla h\rangle,
\end{eqnarray*}
which implies that vector fields $\nabla u$ and $\nabla h$ have the
same direction a.e. in $\Omega$. That is to say, there exists a
nonnegative measurable function $\chi:\Omega\rightarrow[0,\infty)$
such that
\begin{eqnarray}  \label{PL-7-add}
\nabla h=\chi\cdot\nabla u
\end{eqnarray}
a.e. in $\Omega$. Set $\mathcal{D}:=\{x\in\Omega||\nabla u||\nabla
h|>0\}$. Since $\mathcal{X}(x)=0$ a.e. in $\Omega$, for a.e.
$x\in\mathcal{D}$ one has
\begin{eqnarray*}
\left(\frac{h}{u|\nabla h|}\right)^{p}\cdot\left[|\nabla
u|^{p}-p\frac{u^{p-1}}{h^{p-1}}|\nabla u||\nabla h|^{p-2}|\nabla
h|+(p-1)\frac{u^p}{h^p}|\nabla h|^{p}\right]=0,
\end{eqnarray*}
that is,
\begin{eqnarray} \label{PL-7-11}
\left(\frac{h}{u}\frac{|\nabla u|}{|\nabla
h|}\right)^{p}-p\frac{h}{u}\frac{|\nabla u|}{|\nabla h|}+(p-1)=0
\quad\mathrm{a.e.~in}~\mathcal{D}.
\end{eqnarray}
Set
$$\mathcal{A}:=\frac{h}{u}\frac{|\nabla u|}{|\nabla
h|},$$
 and it is easy to see $\mathcal{A}$ is positive in $\mathcal{D}$. Then (\ref{PL-7-11})
becomes $\mathcal{A}^{p}-p\mathcal{A}+(p-1)=0$, which is an equation
of the variable $\mathcal{A}$ of order $p>1$. It is clear that
$\mathcal{A}=1$ is a solution to the equation
$\mathcal{A}^{p}-p\mathcal{A}+(p-1)=0$. However, one can see that
the function $\mathcal{A}\rightarrow\mathcal{A}^{p}$ is strictly
convex on $(0,\infty)$, which implies that $\mathcal{A}=1$ would be
the unique solution. Hence, one has $\mathcal{A}=1$ a.e. in
$\mathcal{D}$, i.e.
\begin{eqnarray} \label{PL-7-12}
h|\nabla u|=u|\nabla h| \quad\mathrm{a.e.~in}~\mathcal{D}.
\end{eqnarray}
On $\Omega\setminus\mathcal{D}$, $|\nabla u||\nabla h|=0$. If there
exists $x\in\Omega\setminus\mathcal{D}$ such that $|\nabla h(x)|=0$,
then it follows from (\ref{PL-7-9}) that $|\nabla u(x)|^{p}=0$.
Similarly, if there exists $x\in\Omega\setminus\mathcal{D}$ such
that $|\nabla u(x)|=0$, then it follows from (\ref{PL-7-9}) that
$(p-1)\frac{u^p(x)}{h^p(x)}|\nabla h(x)|^{p}=0$, which implies
$|\nabla h(x)|=0$. Hence, one has
\begin{eqnarray} \label{PL-7-13}
|\nabla u|=|\nabla h|=0
\quad\mathrm{a.e.~in}~\Omega\setminus\mathcal{D}.
\end{eqnarray}
Combining (\ref{PL-7-add}), (\ref{PL-7-12}) and (\ref{PL-7-13}), we
can obtain
\begin{eqnarray*}
\chi=\frac{h}{u}.
\end{eqnarray*}
Since $u, h>0$ in $\Omega$ and $u,h\in
C^{1,\alpha}_{\mathrm{loc}}(\Omega)$, one has
\begin{eqnarray*}
\nabla(\log h-\log u)=\frac{\nabla h}{h}-\frac{\nabla
u}{u}=\frac{\frac{h}{u}\cdot\nabla u}{h}-\frac{\nabla u}{u}=0,
\end{eqnarray*}
which implies that $\log h-\log u$ is $u$ is constant on each
connected component of $\Omega$. Consequently, $\chi=h/u=\beta$ for
some constant $\beta>0$ should be a constant function. Our proof is
finished.
\end{proof}

\begin{remark}
\rm{From the above proof of Lemma \ref{lemma7-1}, one easily knows
that its assertion is valid without requiring that the potential
function $\phi$ has \textbf{Property 1}. }
\end{remark}

Now, we can prove:

\begin{theorem}  \label{theo-4}
Assume that an $n$-dimensional ($n\geq2$) complete Riemannian
manifold $M^n$ has a radial sectional curvature upper bound
$\kappa(s)$ w.r.t. $q\in M^n$, and assume that the potential
function $\phi$ has \textbf{Property 1}. Then for
$r_{0}<\min\{\mathrm{inj}(q),l\}$ with $\mathrm{inj}(q)$ defined as
in (\ref{inj-R}), one has
\begin{eqnarray}  \label{ECT-9}
\lambda^{\phi}_{1,p}(B(q,r_0))\geq\lambda^{\phi}_{1,p}\left(\mathscr{B}_{n}(q^{+},r_0)\right),
\end{eqnarray}
and the equality holds if and only if $B(q,r_0)$ is isometric to
$\mathscr{B}_{n}(q^{+},r_0)$.
\end{theorem}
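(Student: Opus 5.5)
The plan is to transplant the model eigenfunction to $B(q,r_0)$ and apply the weighted Picone--Barta estimate of Lemma \ref{lemma7-1}. Let $\psi:[0,r_0]\rightarrow[0,\infty)$ be the nonnegative radial eigenfunction of $\lambda^{\phi}_{1,p}(\mathscr{B}_{n}(q^{+},r_0))$ supplied by Lemma \ref{lemma6-5}. It satisfies (\ref{6-8}) with $f$ the warping function of $M^{+}$, together with $\psi'(0)=0$, $\psi(r_0)=0$, $\psi>0$ on $[0,r_0)$ and $\psi'<0$ on $(0,r_0]$. Since $r_0<\mathrm{inj}(q)$, the ball $B(q,r_0)$ lies inside the domain of geodesic polar coordinates. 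We may therefore define $h(\exp_q(s\xi)):=\psi(s)$, which is positive in $B(q,r_0)$ and vanishes on the boundary. Its regularity $h\in C^{1+\alpha}$ with $\Delta_{p,\phi}h$ continuous follows from the ODE and the smoothness of $r$ away from $q$. Near $q$ one uses $\psi'(0)=0$ and the fact that (\ref{6-8}) makes $|\psi'|^{p-2}\psi'$ behave like $-\frac{\lambda}{n}s\,\psi(0)^{p-1}$.

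The key computation is for the radial function $h$. By Property 1, $\nabla h=\psi'\partial_r$ and $|\nabla h|=|\psi'|$, so
\begin{eqnarray*}
\Delta_{p,\phi}h=(p-1)|\psi'|^{p-2}\psi''+\left[\frac{(\sqrt{|g|})'}{\sqrt{|g|}}-\phi'(s)\right]|\psi'|^{p-2}\psi'.
\end{eqnarray*}
Theorem \ref{BishopII} gives $\frac{(\sqrt{|g|})'}{\sqrt{|g|}}\geq (n-1)\frac{f'}{f}$ on $(0,r_0)$. Because $|\psi'|^{p-2}\psi'<0$, this yields $\Delta_{p,\phi}h\leq (p-1)|\psi'|^{p-2}\psi''+[(n-1)\frac{f'}{f}-\phi']|\psi'|^{p-2}\psi'=-\lambda^{\phi}_{1,p}(\mathscr{B}_{n}(q^{+},r_0))\psi^{p-1}$ by (\ref{6-8}). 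Hence $-\Delta_{p,\phi}h/h^{p-1}\geq\lambda^{\phi}_{1,p}(\mathscr{B}_{n}(q^{+},r_0))$ pointwise, and Lemma \ref{lemma7-1} applied on the bounded domain $\Omega=B(q,r_0)$ gives (\ref{ECT-9}).

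For rigidity, the ``if'' direction is trivial, since isometric balls with the same radial $\phi$ have equal eigenvalues. For ``only if'', suppose equality holds. Then $\lambda^{\phi}_{1,p}(B(q,r_0))=\inf\{-\Delta_{p,\phi}h/h^{p-1}\}$, and the equality case of Lemma \ref{lemma7-1} shows that $h=\beta u$ with $u$ a first eigenfunction on $B(q,r_0)$. Thus $-\Delta_{p,\phi}h=\lambda^{\phi}_{1,p}(B(q,r_0))h^{p-1}=\lambda^{\phi}_{1,p}(\mathscr{B}_{n}(q^{+},r_0))h^{p-1}$ identically, so the inequality in the key computation is an equality everywhere in $B(q,r_0)$. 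Since $|\psi'|^{p-2}\psi'\neq0$ on $(0,r_0)$, we get $(\sqrt{|g|})'/\sqrt{|g|}=(n-1)f'/f$, i.e. $[\sqrt{|g|}/f^{n-1}]'=0$ on $(0,r_0)$ for every $\xi$. The equality clause of Theorem \ref{BishopII} then gives $\mathbb{A}(s,\xi)=f(s)I$ on $[0,r_0)$. By (\ref{5-1-1}) the metric is $ds^2+f^2|d\xi|^2$, so $B(q,r_0)$ is isometric to $\mathscr{B}_{n}(q^{+},r_0)$.

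The main obstacle is justifying that $h$ meets the hypotheses of Lemma \ref{lemma7-1}, including its equality clause. This needs the $C^{1+\alpha}$ regularity at the center $q$, where $\partial_r$ is undefined and $p\neq2$ causes degeneracy, and it needs the regularity required to pass from a.e. identities to pointwise ones. I would handle the center by the asymptotics of $\psi$ near $s=0$ derived from integrating (\ref{6-9}). If that proves delicate, I would instead note that a single point has zero $p$-capacity for $p\leq n$ and run the Picone argument on $B(q,r_0)\setminus\{q\}$.
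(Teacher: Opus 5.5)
Your proposal is correct and follows the paper's own proof. You transplant the radial model eigenfunction $\psi$ to $B(q,r_0)$, combine the monotonicity $[\sqrt{|g|}/f^{n-1}]'\geq0$ from Theorem \ref{BishopII} with $\psi'<0$ to get $-\Delta_{p,\phi}h/h^{p-1}\geq\lambda^{\phi}_{1,p}(\mathscr{B}_{n}(q^{+},r_0))$, and conclude with the weighted Picone--Barta estimate of Lemma \ref{lemma7-1}; your rigidity argument (equality clause of Lemma \ref{lemma7-1}, then pointwise equality giving $\sqrt{|g|}=f^{n-1}$, then the equality clause of Theorem \ref{BishopII}) and your regularity check at $q$ simply spell out what the paper compresses into a one-line appeal to Theorem \ref{BishopII}.
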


\begin{proof}
Let $\psi(s):[0,r_0]\rightarrow[0,\infty)$
 be a nonnegative
radial eigenfunction of
$\lambda_{1,\phi}\left(\mathscr{B}_{n}(q^{+},r_0)\right)$ of
$M^{+}$. By Lemma \ref{lemma6-5}, one knows that the eigenfunction
$\psi$ satisfies (\ref{6-8}), with $\psi'(0)=\psi(r_0)=0$,
$\psi\geq0$ on $[0,r_0)$, and $\frac{d\psi}{ds}<0$ on $(0,r_{0})$.
Define a function
$\mathcal{F}:\overline{B(q,r_{0})}\rightarrow[0,\infty)$ by
\begin{eqnarray*}
\mathcal{F}(\exp_{q}(s\xi))=\psi(s)
\end{eqnarray*}
for $(s,\xi)\in[0,r_{0})\times S_{q}^{n-1}$. Then by a
straightforward calculation in geodesic spherical coordinates around
$q$ and using the characterization (\ref{chr-2}), one can get
\begin{eqnarray*}
\frac{\Delta_{p,\phi}\mathcal{F}}{\mathcal{F}^{p-1}}(\exp_{q}(s\xi))&\leq&\frac{1}{\psi^{p-1}}
\left\{(p-1)|\psi'(s)|^{p-2}\psi''(s)+\left[(n-1)\frac{f'(s)}{f(s)}-\phi'(s)\right]|\psi'(s)|^{p-2}\psi'(s)\right\} \\
 &=&-\lambda^{\phi}_{1,p}\left(\mathscr{B}_{n}(q^{+},r_0)\right),
\end{eqnarray*}
which implies
\begin{eqnarray*}
\inf\limits_{\Omega}\left\{-\frac{\Delta_{p,\phi}\mathcal{F}}{\mathcal{F}^{p-1}}\right\}\geq\lambda^{\phi}_{1,p}\left(\mathscr{B}_{n}(q^{+},r_0)\right).
\end{eqnarray*}
Applying Lemma \ref{lemma7-1}, one has
\begin{eqnarray*}
\lambda^{\phi}_{1,p}(B(q,r_0))\geq\inf\limits_{B(q,r_0)}\left\{-\frac{\Delta_{p,\phi}\mathcal{F}}{\mathcal{F}^{p-1}}\right\}\geq\lambda^{\phi}_{1,p}\left(\mathscr{B}_{n}(q^{+},r_0)\right),
\end{eqnarray*}
which implies (\ref{ECT-9}) directly. When the equality in
(\ref{ECT-9}) holds, the isometric rigidity between $B(q,r_0)$ and
$\mathscr{B}_{n}(q^{+},r_0)$ can be obtained by directly using
Theorem \ref{BishopII}.
\end{proof}

\begin{remark}
\rm{ Clearly, Theorem \ref{theo-2} exactly corresponds to the case
$p=2$ of Theorem \ref{theo-4}. }
\end{remark}

\begin{remark}
\rm{ If $\phi=const.$, then (\ref{ECT-9}) in Theorem \ref{theo-4}
would reduce to
\begin{eqnarray}  \label{ECT-10}
\lambda_{1,p}(B(q,r_0))\geq\lambda_{1,p}\left(\mathscr{B}_{n}(q^{+},r_0)\right),
\end{eqnarray}
and the equality holds if and only if $B(q,r_0)$ is isometric to
$\mathscr{B}_{n}(q^{+},r_0)$. If furthermore the curvature
assumption was strengthened to be that the radial sectional
curvature $\mathscr{K}(\cdot,\cdot)$ has an upper bound $K$ for some
constant $K$, then the eigenvalue comparison (\ref{ECT-10}) would
reduce to
\begin{eqnarray} \label{ECT-11}
\lambda_{1,p}(B(q,r_0))\geq\lambda_{1,p}\left(\mathcal{B}_{n}(K,r_0)\right),
\end{eqnarray}
 and the equality holds if and only if $B(q,r_0)$ is isometric to
$\mathcal{B}_{n}(K,r_0)$. This eigenvalue comparison (\ref{ECT-11})
and related rigidity result have been shown in \cite[Theorem
1.4]{SPHC} very recently. }
\end{remark}

\vspace{3mm}

\section*{Acknowledgments}
\renewcommand{\thesection}{\arabic{section}}
\renewcommand{\theequation}{\thesection.\arabic{equation}}
\setcounter{equation}{0} \setcounter{maintheorem}{0}

This research was supported in part by the NSF of China (Grant Nos.
11801496 and 11926352), the Fok Ying-Tung Education Foundation
(China), the Key Project of Jiangxi Provincial Natural Science
Foundation (Grant No. 20252BAC250003), Hubei Key Laboratory of
Applied Mathematics (Hubei University), and Key Laboratory of
Intelligent Sensing System and Security (Hubei University), Ministry
of Education.

\end{document}